\title[The liftability question for stable equivalences]{The liftability question for stable equivalences \\ between representation-finite self-injective algebras}
\author{Nengqun Li and Yuming Liu*}
\address{Nengqun Li and Yuming Liu
\newline School of Mathematical Sciences
\newline Laboratory of Mathematics and Complex Systems
\newline Beijing Normal University
\newline Beijing 100875
\newline P.R.China}
\email{ymliu@bnu.edu.cn}
\email{wd0843@163.com}
\date{version of \today}
\newtheorem{Thm}{Theorem}[section]
\newtheorem{Lem}[Thm]{Lemma}
\newtheorem{Def}[Thm]{Definition}
\newtheorem{Cor}[Thm]{Corollary}
\newtheorem{Prop}[Thm]{Proposition}
\newtheorem{Ex1}[Thm]{Example}
\newtheorem{Rem1}[Thm]{Remark}
\newenvironment{Rem}{\begin{Rem1}\rm}{\end{Rem1}}
\newcommand{\lra}{\longrightarrow}
\newcommand{\ra}{\rightarrow}
\newcommand{\sdp}{\times\kern-.2em\vrule height1.1ex depth-.05ex}
\newcommand{\epi}{\lra \kern-.8em\ra}
\newcommand{\rad}{\mbox{rad}}
\newcommand{\soc}{\mbox{soc}}
\newcommand{\al}{\alpha}
\newcommand{\be}{\beta}
\begin{document}
\renewcommand{\thefootnote}{\alph{footnote}}
\setcounter{footnote}{-1} \footnote{* Corresponding author.}
\setcounter{footnote}{-1} \footnote{\it{Mathematics Subject
Classification(2020)}: 16G10, 18G65.}
\renewcommand{\thefootnote}{\alph{footnote}}
\setcounter{footnote}{-1} \footnote{ \it{Keywords}: liftability question, (nonstandard) RFS algebra,
stable equivalence (of Morita type), (stable) Picard group, (standard) derived equivalence.}

\maketitle

\begin{abstract}
Let $k$ be an algebraically closed field. It is known that any stable equivalence between standard representation-finite self-injective $k$-algebras (without block of Loewy length 2) lifts to a standard derived equivalence, in particular, it is of Morita type. We show that the same holds for any stable equivalence between nonstandard representation-finite self-injective $k$-algebras. We also fill a gap in the original proof in standard case. This gives a complete solution of the liftability question raised by H. Asashiba about twenty years ago.
\end{abstract}	

\bigskip
\section{Introduction}
Throughout this paper, we fix an algebraically closed field $k$. Unless otherwise stated, all algebras will be
finite-dimensional $k$-algebras, and all their modules will be finite-dimensional left modules. For an algebra $A$, we denote by $\mathrm{mod}A$ the category of $A$-modules, and by $\underline{\mathrm{mod}}A$ the stable category of $\mathrm{mod}A$ modulo projective modules. We abbreviate (indecomposable, basic) representation-finite self-injective algebra over $k$ (not isomorphic to the underlying field $k$) by RFS algebra.

The classification of RFS algebras was finished in the 1980's by Riedtmann and her collaborators using covering theory and the notion of (combinatorial) configurations. Let $Q$ be a Dynkin quiver of type $A_n, D_n, E_6, E_7$ or $E_8$, and let $\mathbb{Z}Q$ be the translation quiver associated to $Q$ with the translation denoted as $\tau$. For a translation quiver $\Gamma$, we let $k\Gamma$ be its path category, whose objects are the vertices of $\Gamma$ and morphisms are generated by the paths of $\Gamma$ over $k$; and let $k(\Gamma)$ be the mesh category of $\Gamma$, which is a factor category of $k\Gamma$ by the mesh ideal. Riedtmann showed in \cite{Riedtmann1} that for an RFS algebra $A$, the stable AR-quiver $\prescript{}{s}{\Gamma}_{A}$ is of the form $\mathbb{Z}Q/\Pi$, where $Q$ (the underlying graph of which is called the tree class of $A$) is a Dynkin quiver of type $A_n, D_n, E_6, E_7$ or $E_8$, and $\Pi$ is some admissible subgroup of the automorphism group of $\mathbb{Z}Q$.

\begin{Def}\label{combinatorial-configuration} {\rm(\cite{Riedtmann2})}
Let $\Delta$ be a stable translation quiver. A (combinatorial)
configuration $\mathcal{C}$ is a set of vertices of $\Delta$ which
satisfy the following conditions:
\begin{enumerate}
\item For any $e, f\in \mathcal{C}$,
$Hom_{k(\Delta)}(e,f)= \left\{\begin{array}{ll} 0 & (e\neq f), \\
k & (e=f).\end{array}\right.$
\item For any $e\in\Delta_0$, there exists some $f\in \mathcal{C}$ such that $Hom_{k(\Delta)}(e,f)\neq 0$.
\end{enumerate}
\end{Def}

In \cite{Riedtmann2,Riedtmann4,BLR}, it was shown that the
isoclasses of $\Pi$-stable $\mathbb{Z}Q$ configurations (two
configurations $\mathcal{C}$ and $\mathcal{C}'$ of $\mathbb{Z}Q$ are
called isomorphic if $\mathcal{C}$ is mapped onto $\mathcal{C}'$
under an automorphism of $\mathbb{Z}Q$) correspond bijectively to the isoclasses of
RFS algebras of tree class $Q$ with admissible group $\Pi$, except in
the case of $Q=D_{3m}$ with underlying field having characteristic
$2$. In such a case, an isoclass of $\Pi$-stable $\mathbb{Z}Q$ configuration might correspond to two
isoclasses of RFS algebras; both are symmetric algebras, one of
which is standard, while the other one is nonstandard. Here, an RFS algebra $A$ is called standard if
$k(\Gamma_A)$ is equivalent to $\mathrm{ind}A$, where $\Gamma_A$ is the
AR-quiver of $A$ and $\mathrm{ind}A$ is the full subcategory of $\mathrm{mod}A$ whose
objects are specific representatives of the isoclasses of
indecomposable $A$-modules. Nonstandard RFS algebras are RFS algebras which are not standard. We will introduce the representative algebra of nonstandard RFS algebras in next section.

The derived and stable classifications of RFS algebras were given by Asashiba in 1999. Now we briefly recall his results. First we need
to define the type of an RFS algebra $A$.  If $A$ is as above, by a theorem of Riedtmann \cite{Riedtmann1}, $\Pi$ has the form $\langle \zeta\tau^{-r}\rangle$ where $\zeta$ is some automorphism of $Q$ and $\tau$ is the translation. We also recall the Coxeter numbers of $Q=A_n,D_n,E_6,E_7,E_8$ are $h_Q = n+1,2n-2,12,18,30$ respectively. The frequency of $A$ is defined to be $f_A = r/(h_Q-1)$ and the torsion order $t_A$ of $A$ is defined as the order of $\zeta$.  The type of $A$ is defined as the triple $(Q,f_A,t_A)$ and denoted by $\mathrm{typ}(A)$.

\begin{Thm}\label{dclassRFS} {\rm(\cite{Asashiba1999})} Let $A$
and $B$ be RFS $k$-algebras for $k$ algebraically closed.
\begin{enumerate}
\item If $A$ is standard and $B$ is non-standard, then $A$ and $B$ are not stably equivalent, and hence not derived equivalent.
\item If both $A$ and $B$ are standard, or both non-standard, the
following are equivalent:
\begin{enumerate}
\item $A,B$ are derived equivalent;
\item $A,B$ are stably equivalent of Morita type;
\item $A,B$ are stably equivalent;
\item $A,B$ have the same stable AR-quiver;
\item $A,B$ have the same type.
\end{enumerate}
\item The types of standard RFS algebras are the following:
\begin{enumerate}
\item $\{ (A_n,s/n,1) | n,s\in \mathbb{N}\}$,
\item $\{ (A_{2p+1},s,2) | p,s\in \mathbb{N}\}$,
\item $\{ (D_n,s,1) | n,s\in \mathbb{N}, n\geq 4\}$,
\item $\{ (D_{3m},s/3,1) | m,s\in \mathbb{N} , m\geq 2, 3\nmid s\}$,
\item $\{ (D_n,s,2) | n,s\in \mathbb{N}, n\geq 4\}$,
\item $\{ (D_4,s,3) | s\in \mathbb{N}\}$,
\item $\{ (E_n,s,1) | n=6,7,8; s\in \mathbb{N}\}$,
\item $\{ (E_6,s,2) | s\in \mathbb{N}\}$.
\end{enumerate}
Non-standard RFS algebras are of type $(D_{3m},1/3,1)$ for some
$m\geq 2$.
\end{enumerate}
\end{Thm}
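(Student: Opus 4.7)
My plan is to establish the three parts using Riedtmann's classification as the backbone, then deal with the distinguishing of standard from nonstandard last.

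First, for part (3), I would invoke Riedtmann's theorem that $\Pi = \langle \zeta\tau^{-r}\rangle$ and run through the admissibility conditions for each Dynkin type $Q$. The order of $\zeta$ must divide $|\mathrm{Aut}(Q)|$, the action of $\Pi$ must be free on $\mathbb{Z}Q$, and $\Pi$ must stabilize a combinatorial configuration on $\mathbb{Z}Q$; these three constraints, tabulated against the classification of $\Pi$-stable $\mathbb{Z}Q$ configurations from \cite{Riedtmann2,Riedtmann4,BLR}, single out exactly the eight families of standard types, together with the single family $(D_{3m},1/3,1)$ giving nonstandard examples in characteristic $2$.

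Second, for the chain of equivalences in (2), I would traverse $(a)\Rightarrow(b)\Rightarrow(c)\Rightarrow(d)\Rightarrow(e)\Rightarrow(a)$. The implication $(a)\Rightarrow(b)$ is Rickard's theorem that a derived equivalence between self-injective algebras descends via a one-sided tilting complex to a stable equivalence of Morita type; $(b)\Rightarrow(c)$ is immediate; $(c)\Rightarrow(d)$ uses that a stable equivalence preserves the stable AR-quiver, by Auslander--Reiten. For $(d)\Rightarrow(e)$, the stable AR-quiver $\prescript{}{s}{\Gamma}_{A}\cong \mathbb{Z}Q/\langle\zeta\tau^{-r}\rangle$ determines $Q$ (and hence $h_Q$) as its tree class, and reading off $r$ together with $\mathrm{ord}(\zeta)$ from $\Pi$ yields the triple $(Q,f_A,t_A)$.

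The genuinely difficult step is $(e)\Rightarrow(a)$, which requires actually constructing a derived equivalence. For each of the types listed in (3), I would fix a canonical representative (a suitable self-injective Nakayama algebra or Brauer tree algebra in the standard cases, the Riedtmann algebra introduced in the next section in the nonstandard case) and show that every other algebra of that type is derived equivalent to it. The tool is iterated tilting at vertices: each such mutation induces an almost $\nu$-stable derived equivalence, and at the level of combinatorial configurations it produces a local move. One must then argue that any two $\Pi$-stable configurations of the same type on $\mathbb{Z}Q$ are connected by a finite sequence of such moves, possibly after conjugating by an automorphism of $\mathbb{Z}Q$. I expect this to be the main technical obstacle, because the required mutation graph must be analyzed separately for each Dynkin type and each admissible group, and it is the step that cannot be outsourced to general homological machinery.

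Finally, for part (1), I would argue that standard and nonstandard RFS algebras are separated already stably by the $k$-linear structure of $\underline{\mathrm{mod}}A$. For a standard $A$ one has an equivalence $\underline{\mathrm{mod}}A\simeq k(\prescript{}{s}{\Gamma}_{A})$ with the mesh category, whereas for the nonstandard representative of type $(D_{3m},1/3,1)$ the stable category is $k$-linearly inequivalent to any mesh category (the composition law is ``twisted'' in characteristic $2$). A hypothetical stable equivalence from a standard to a nonstandard algebra of the same tree class would transport the mesh-category structure and force an equivalence that does not exist, giving the required obstruction.
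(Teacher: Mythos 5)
Theorem \ref{dclassRFS} is not proved in this paper at all: it is quoted from \cite{Asashiba1999} as background (the label carries the citation), so there is no in-paper argument to compare your proposal against. Judged on its own terms against what Asashiba actually has to do, your outline has the right skeleton --- the cycle $(a)\Rightarrow(b)\Rightarrow(c)\Rightarrow(d)\Rightarrow(e)\Rightarrow(a)$, Riedtmann's description of admissible groups and configurations for part (3), and the mesh-category obstruction for part (1) --- but at three points it contains genuine gaps rather than compressed citations.

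First, $(e)\Rightarrow(a)$: you correctly identify this as the main obstacle, but what you offer is a plan, not an argument. The connectivity of the relevant mutation/reflection graph of $\Pi$-stable configurations, carried out type by type, \emph{is} the content of \cite{Asashiba1999}; asserting that ``one must then argue'' it does not discharge it. Second, $(c)\Rightarrow(d)$: you justify this by ``a stable equivalence preserves the stable AR-quiver, by Auslander--Reiten,'' but a stable equivalence is only automatically compatible with the quiver of irreducible maps; that it commutes with $\tau$, hence preserves the translation-quiver structure (which is what ``same stable AR-quiver'' means and what the recovery of the type in $(d)\Rightarrow(e)$ uses), can fail for blocks of Loewy length $2$ by \cite[Chapter X, Corollary 1.9(2)]{ARS}. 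This is precisely the subtlety that Remark \ref{counterexample} of the present paper identifies as the source of the error in \cite[Proposition 3.3]{Asashiba2003}, so it cannot be waved through here; the Loewy length $2$ case needs a separate (easy but explicit) treatment. Third, part (1): the obstruction you name is the right one, but the assertion that $\underline{\mathrm{mod}}\Lambda$ for the nonstandard algebra is $k$-linearly inequivalent to the mesh category $k(\prescript{}{s}{\Gamma}_{\Lambda})$ is itself a nontrivial theorem of Riedtmann \cite{Riedtmann4} (it is what ``nonstandard'' means only after one proves the two candidate algebras attached to the same configuration are non-isomorphic and stably inequivalent); your parenthetical ``the composition law is twisted in characteristic $2$'' restates the claim rather than proving it. With these three points properly referenced to \cite{Asashiba1999} and \cite{Riedtmann4} the proposal would be an acceptable summary of the known proof, but as written it does not constitute one.
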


An interesting question arising from the above classification theorem is the following:

\medskip
{\it The liftability question {\rm(\cite{Asashiba2003})}}: Is every stable equivalence $\phi:\underline{\mathrm{mod}}A\rightarrow \underline{\mathrm{mod}}B$ between two RFS $k$-algebras $A$ and $B$ lifts to a standard derived equivalence? In particular, is $\phi$ a stable equivalence of Morita type?

\medskip
Asashiba answered positively the above question for most standard RFS algebras, and the other few cases in standard case were solved by Dugas \cite{Dugas2014} using mutation theory (see also \cite{CKL} for an alternative proof).

\begin{Rem} \label{counterexample} (1) We noticed that there are counterexamples of \cite[Proposition 3.3]{Asashiba2003} if $\Lambda$ and $\Pi$ have Loewy length 2. Let $\Lambda=\Pi=A$ be the RFS algebra given by the quiver $$\xymatrix{
			& 1\ar[ld]_{\alpha} &  \\
			2 \ar[rr]_{\beta} &   & 3\ar[lu]_{\gamma}
		}$$ and relations $\beta\alpha=\gamma\beta=\alpha\gamma=0$. Let $\phi:\underline{\mathrm{mod}}A\rightarrow \underline{\mathrm{mod}}A$ be the stable equivalence given by $\phi(1)=2$, $\phi(2)=1$, $\phi(3)=3$. Since the configuration of $A$ is the set of simple modules, $\phi$ preserves the configuration of $A$. But $\phi$ does not commute with the loop functor $\Omega_{A}$, hence is not a stable equivalence of Morita type.

The reason why such counterexamples appear is that in the proof of \cite[Proposition 3.3]{Asashiba2003}, the author assumed that each stable equivalence between standard RFS algebras induces a translation quiver isomorphism between the corresponding stable AR-quivers, thus the proposition needs an additional assumption that the stable equivalence in question commutes with AR-translation up to isomorphisms. However, by \cite[Chapter X, Corollary 1.9(2)]{ARS}, it might be wrong for self-injective algebras with blocks of Loewy length 2.

(2) One key step in the proof of \cite[Proposition 3.3]{Asashiba2003} is to construct a functor $\Phi:k(\Gamma_{\Lambda})\rightarrow k(\Gamma_{\Pi})$ from an equivalence functor $\phi': k(\prescript{}{s}{\Gamma}_{\Lambda})\rightarrow k(\prescript{}{s}{\Gamma}_{\Pi})$ such that $\phi'(\mathcal{C}_{\Lambda})=\mathcal{C}_{\Pi}$, where $\mathcal{C}_{\Lambda}$ (resp. $\mathcal{C}_{\Pi}$) corresponds to the radicals of indecomposable projective $\Lambda$-modules (resp. the radicals of indecomposable projective $\Pi$-modules). Since $k(\prescript{}{s}{\Gamma}_{\Lambda})$ (resp. $k(\prescript{}{s}{\Gamma}_{\Pi})$) is a quotient category of $k({\Gamma}_{\Lambda})$ (resp. $k({\Gamma}_{\Pi})$), and the values of $\Phi$ on the arrows $\alpha$ of $\prescript{}{s}{\Gamma}_{\Lambda}$ depend on the choices of the lifting of $\phi'(\alpha)$ in $k(\Gamma_{\Pi})$, one needs to choose carefully these values so that $\Phi$ preserves all mesh relations. It seems that this verification is skipped in the proof of \cite[Proposition 3.3]{Asashiba2003} and it is not clear for us how to fill this gap under the assumption on $\phi'$.

To obtain a corrected form of \cite[Proposition 3.3]{Asashiba2003}, we need to strengthen the condition on $\phi'$ so that $\phi': k(\prescript{}{s}{\Gamma}_{A})\rightarrow k(\prescript{}{s}{\Gamma}_{A})$ is an isomorphism functor inducing identity map on the objects of $k(\prescript{}{s}{\Gamma}_{A})$, where $A$ is some representative algebra for a given type of standard RFS algebras. Then we can construct a functor $\Phi:k(\Gamma_{A})\rightarrow k(\Gamma_{A})$ which lifts $\phi'$ under the above condition. The construction of $\Phi$ is based on several technical lemmas presented in Section 3. The detailed explanation will be given in Appendix A.
\end{Rem}

However, the liftability question in nonstandard case remains open. The main purpose of the present paper is to give a positive answer in nonstandard case.

\begin{Thm}\label{main-result}
Let $A$, $B$ be nonstandard RFS algebras. Then each stable equivalence $\phi:\underline{\mathrm{mod}}A\rightarrow \underline{\mathrm{mod}}B$ lifts to a standard derived equivalence. In particular, it is of Morita type.
\end{Thm}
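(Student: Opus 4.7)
The plan is to reduce the statement, via Theorem~\ref{dclassRFS}, to studying stable self-equivalences of a single fixed representative nonstandard RFS algebra, and then to lift each such self-equivalence by combinatorial means using the mesh category together with the explicit presentation of the nonstandard algebra.

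First I would reduce to the case $A=B$. By Theorem~\ref{dclassRFS}(2), the hypothesis that $A$ and $B$ are stably equivalent forces them to be derived equivalent, and in fact to share a standard derived equivalence $F\colon D^b(\mathrm{mod}\,A)\to D^b(\mathrm{mod}\,B)$; let $\underline{F}$ denote the induced stable equivalence. If $\underline{F}^{-1}\circ\phi\in\mathrm{Aut}(\underline{\mathrm{mod}}A)$ lifts to a standard derived self-equivalence of $A$, then so does $\phi$. Moreover, $A$ and $B$ are necessarily of the same type $(D_{3m},1/3,1)$ (Theorem~\ref{dclassRFS}(2),(3)), and the classification via configurations recalled in the introduction implies that nonstandard RFS algebras of a given type are unique up to isomorphism, so we may assume $A=B$ and restrict attention to stable self-equivalences $\psi:=\underline{F}^{-1}\circ\phi$.

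Second I would extract a combinatorial datum from $\psi$. Because every nonstandard RFS algebra has Loewy length strictly greater than $2$, the obstruction noted in Remark~\ref{counterexample}(1) does not occur, and $\psi$ descends to an honest translation-quiver automorphism $\sigma$ of ${}_s\Gamma_A$. Composing $\psi$ with a suitable algebra automorphism of $A$ (viewed as a stable self-equivalence) allows us to arrange that $\sigma$ fixes objects and that the induced isomorphism $\phi'\colon k({}_s\Gamma_A)\to k({}_s\Gamma_A)$ preserves the configuration $\mathcal{C}_A$ attached to the radicals of indecomposable projective $A$-modules. The next step is to lift $\phi'$ to an isomorphism $\Phi\colon k(\Gamma_A)\to k(\Gamma_A)$ of mesh categories; this is the analogue of the construction outlined in Remark~\ref{counterexample}(2), and is where the technical lemmas of Section~3 are used. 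Since $\mathcal{C}_A$ is $\phi'$-stable and the objects are fixed, one can choose for each arrow $\alpha$ of ${}_s\Gamma_A$ a lift of $\phi'(\alpha)$ in $k(\Gamma_A)$ in a coherent way so that every mesh relation is preserved.

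Third, from $\Phi$ I would build an honest autoequivalence of $\mathrm{ind}\,A$, and hence a Morita autoequivalence of $\mathrm{mod}\,A$ that lifts $\psi$ to a standard derived equivalence. For a standard algebra this step is tautological since $k(\Gamma_A)\simeq\mathrm{ind}\,A$, but for a nonstandard $A$ the category $\mathrm{ind}\,A$ is a genuine (characteristic~$2$) deformation of the mesh category, and this mismatch is the main obstacle. I expect to handle it by showing that, in the nonstandard $(D_{3m},1/3,1)$ setting, the deformation is governed by a single explicit cocycle that must be preserved by any configuration-preserving $\Phi$; this allows the combinatorial lift to be transported through the deformation to an autoequivalence of $\mathrm{ind}\,A$. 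Once this compatibility is established, the resulting Morita autoequivalence automatically lifts $\phi$ to a standard derived equivalence, giving the claimed conclusion and thus completing the liftability question raised in \cite{Asashiba2003}.
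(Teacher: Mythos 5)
Your overall architecture (reduce to the representative algebra $\Lambda$ via Theorem~\ref{dclassRFS}, normalize the induced automorphism of the stable AR-quiver, then lift an object-fixing stable self-equivalence through the mesh-type category) is the same as the paper's, but your second step contains a genuine gap. You propose to arrange that the induced translation-quiver automorphism $\sigma$ of ${}_s\Gamma_A$ fixes all objects by composing $\psi$ with \emph{an algebra automorphism of $A$}. This cannot work: a Morita self-equivalence of the basic algebra $\Lambda$ sends projectives to projectives, hence the automorphism it induces on ${}_s\Gamma_\Lambda$ must preserve the configuration $\mathcal{C}=\{(0,3m-1),(2m-1-j,1)\mid j=1,\dots,m-1\}$, and since $\mathrm{Aut}({}_s\Gamma_\Lambda)=\langle\tau\rangle\times\langle\eta\rangle$ with no nontrivial element of this group stabilizing $\mathcal{C}$, algebra automorphisms induce only the identity on the vertices of ${}_s\Gamma_\Lambda$. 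So if $\sigma=\tau^r\eta^i$ with $(r,i)\neq(0,0)$ you have no way to normalize it. The paper instead writes $\sigma=\tau^r\eta^i$ and composes with $\tau_\Lambda^{-r}=\Omega_\Lambda^{-2r}$ (liftable because $\Omega$ is) and with $H^{-i}$, where $H$ is a stable self-equivalence realizing the swap $\eta$ of the two high vertices that is \emph{itself induced by a standard derived equivalence}. The existence of such an $H$ is Proposition~\ref{H-and-H'}, proved by mutation of simple-minded systems (Okuyama's lemma) in Appendix~B, and it is an essential, nontrivial input that your proposal omits entirely; without it the case $i=1$ of the coset decomposition in Proposition~\ref{Prop,stable-picard} is unreachable.

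A secondary weakness: in your third step you correctly identify that for nonstandard $\Lambda$ one cannot work with the mesh category $k(\Gamma_A)$ itself, but your proposed resolution (``the deformation is governed by a single explicit cocycle that must be preserved by any configuration-preserving $\Phi$'') is only a declared expectation, not an argument. The paper's Proposition~\ref{Prop, lift} handles this by working from the outset with the quotient $k\Gamma_\Lambda/J$ by the \emph{modified} mesh ideal (containing $m_{(0,3m-1)}+p$), and then constructing the lift arrow by arrow in four substeps, using Lemmas~\ref{Lem, lift a mesh relation} and~\ref{Lem, lift a mesh relations} together with an explicit verification that every morphism $U(0,3m)\to U(1,3m)$ factoring through a projective is zero. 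That verification is exactly where the ``deformation'' is tamed, and some such concrete argument is needed; it does not follow formally from configuration-preservation.
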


Thus, together with Appendix A, we give a complete solution of the liftability question in \cite{Asashiba2003} with a corrected form (Proposition \ref{corrected-form}) of \cite[Proposition 3.3]{Asashiba2003}.

\medskip
This article is organized as follows. In Section 2, we recall the representative algebra $\Lambda$ of nonstandard RFS algebras, its stable AR-quiver $\prescript{}{s}{\Gamma}_{\Lambda}$ and $\underline{\mathrm{ind}}\Lambda$ in terms of a quotient category of the path category $k\prescript{}{s}{\Gamma}_{\Lambda}$. In Section 3, we prove a technical result (Proposition \ref{Prop, lift}) on lifting of a stable auto-equivalence of the nonstandard RFS algebra $\Lambda$ to a Morita equivalence. The last section is devoted to prove our main result Theorem \ref{main-result}. As a by-product, we determine the stable Picard group $\mathrm{StPic}(\Lambda)$ of $\Lambda$ (Proposition \ref{Prop,stable-picard}).

In Appendix A, we prove a corrected form (Proposition \ref{corrected-form}) of \cite[Proposition 3.3]{Asashiba2003} and explain how to use it to reprove \cite[Theorem 3.1]{Asashiba2003}. In Appendix B, we give a detailed proof of \cite[Lemma 4.10]{CKL}, which will be used in the proof of our main result.

\section*{Data availability} The datasets generated during the current study are available from the corresponding author on reasonable request.

\section*{Acknowledgements} This research is supported by NSFC (No.12031014). We are very grateful to Professor Hideto Asashiba for valuable comments on our results. We are very grateful to an anonymous referee for careful reading and valuable suggestions on a previous version, which have led to
substantial changes and significant improvement on both mathematics and language of this paper.

\section{The representative algebra of nonstandard RFS algebras}
Let $k$ be an algebraically closed field of characteristic $2$, $\Lambda$ be the representative algebra of nonstandard RFS algebras of type $(D_{3m},1/3,1)$ as in \cite[Appendix 2]{Asashiba2003}, where $m\geq 2$. The algebra $\Lambda$ is given by the quiver $Q$ below with relations $\alpha_{m}\dots \alpha_1=\beta^2$, $\alpha_i\dots\alpha_{i+1}\alpha_i =0$ for all $i\in\{1,...,m\}=\mathbb{Z}/\langle m\rangle$, $\alpha_1\alpha_m=\alpha_1\beta\alpha_m$.

$$
\vcenter{
\xymatrix {
	& m \ar[dl]_{\alpha_m} & \ar[l]_{\alpha_{m-1}} \ar@{{}*{\cdot}{}}[r] &  \\
	1 \ar@(ul,dl)_{\beta} \ar[dr]_{\alpha_1} & & & \\
	& 2 \ar[r]_{\alpha_2} & \ar@{{}*{\cdot}{}}[r]&\\
}}$$

Let $\Gamma_{\Lambda}$ be the AR-quiver of $\Lambda$ and $\prescript{}{s}{\Gamma}_{\Lambda}$ be the stable AR-quiver of $\Lambda$, then $\prescript{}{s}{\Gamma}_{\Lambda}\cong \mathbb{Z}D_{3m}/\langle \tau^{2m-1}\rangle$. We use the following enumeration on the vertices of $D_{3m}$:

$$\xymatrix{
		& & &3m & \\
		1 \ar[r] & 2\ar[r]  & \cdots  \ar[r] & 3m-2 \ar[u] \ar[r] &3m-1 &
	}$$

Recall that the vertices $3m$ and $3m-1$ are called high vertices of $D_{3m}$ and it is convenient to write a vertex of $\prescript{}{s}{\Gamma}_{\Lambda}$ as its coordinate $(p,q)$, where $p\in\{1,...,2m-1\}=\mathbb{Z}/\langle 2m-1\rangle, 1\leq q\leq 3m$. The simple $\Lambda$-module corresponding to the vertex $i$ ($1\leq i\leq m$) in the quiver $Q$ of $\Lambda$ will be simply denoted by $i$. Note that by \cite[Satz 4.4: 3) a)]{Waschbusch1981}, we can draw the stable AR-quiver $\prescript{}{s}{\Gamma}_{\Lambda}$ so that the simple module $1$ corresponds to $(0,3m)$, the simple module $j$ corresponds to $(2m-j,1)$ for $2\leq j\leq m$. Let $P_i$ be the indecomposable projective $\Lambda$-module corresponding to the vertex $i$. Then we have the following structure of the indecomposable projective $\Lambda$-modules:

\medskip
$P_1=$ \xymatrix@R=0.5pc@C=0.8pc {&1\ar@{-}[dl]\ar@{-}[dr]& \\
1 \ar@{-}[d]\ar@{-}[dddrr]& & 2\ar@{-}[d] \\
2 \ar@{.}[d] &  & 3 \ar@{.}[d] \\
m-1\ar@{-}[d] & & m\ar@{-}[d] \\
m \ar@{-}[dr]& & 1\ar@{-}[dl] \\
& 1 & }, $P_2=$ \xymatrix@R=0.5pc@C=0.8pc {& 2 \ar@{-}[d]\\
& 3 \ar@{.}[d]\\
& m \ar@{-}[d]\\
& 1 \ar@{-}[dd]\ar@{-}[dl]\\
1 \ar@{-}[dr]& \\
& 2}, $\dots$, $P_m=$ \xymatrix@R=0.5pc@C=0.8pc {& m \ar@{-}[d]\\
& 1 \ar@{-}[dd]\ar@{-}[dl]\\
1 \ar@{-}[dr]& \\
& 2 \ar@{-}[d]\\
&3 \ar@{.}[d] \\
& m}.\\

\noindent Let $\mathcal{C}:=\{{\rad}P_i\mid i=1,2,\cdots,m\}$. Then we have
	
\medskip
${\rad}P_1=$ \xymatrix@R=0.5pc@C=0.8pc {
	1 \ar@{-}[d]\ar@{-}[dddrr]& & 2\ar@{-}[d] \\
2 \ar@{.}[d] &  & 3 \ar@{.}[d] \\
m-1\ar@{-}[d] & & m\ar@{-}[d] \\
m \ar@{-}[dr]& & 1\ar@{-}[dl] \\
& 1 & }, ${\rad}P_2=$ \xymatrix@R=0.5pc@C=0.8pc {
& 3 \ar@{.}[d]\\
& m \ar@{-}[d]\\
& 1 \ar@{-}[dd]\ar@{-}[dl]\\
1 \ar@{-}[dr]& \\
& 2
}, $\dots$, ${\rad}P_m=$ \xymatrix@R=0.5pc@C=0.8pc {
& 1 \ar@{-}[dd]\ar@{-}[dl]\\
1 \ar@{-}[dr]& \\
& 2 \ar@{-}[d]\\
&3 \ar@{.}[d] \\
& m}. \\

\noindent The positions of $\mathcal{C}$ in the stable AR-quiver $\prescript{}{s}{\Gamma}_{\Lambda}$ are important, they indicate the positions of the indecomposable projective modules in the AR-quiver $\Gamma_{\Lambda}$. Since the loop functor $\Omega_{\Lambda}$ induces an automorphism of $\prescript{}{s}{\Gamma}_{\Lambda}$ and a bijection between the set of simple $\Lambda$-modules and $\mathcal{C}$, and since $\{(0,3m-1),(2m-1-j,1)\mid j=1,...,m-1\}$ and $\{(0,3m),(2m-1-j,1)\mid j=1,...,m-1\}$ are isomorphic configurations, we can draw the stable AR-quiver $\prescript{}{s}{\Gamma}_{\Lambda}$ in a new way so that $\mathcal{C}$ corresponds to $\{(0,3m-1),(2m-1-j,1)\mid j=1,...,m-1\}$ in $\prescript{}{s}{\Gamma}_{\Lambda}$. In the following, we fix $\mathcal{C}$ to the position $\{(0,3m-1),(2m-1-j,1)\mid j=1,...,m-1\}$ in $\prescript{}{s}{\Gamma}_{\Lambda}$, except in Appendix A (where we fix the simple $\Lambda$-modules $1,2,\cdots,m$ to the positions $(0,3m)$, $(2m-2,1)$, $\cdots$, $(m,1)$, respectively).

Since $\Lambda$ is a nonstandard RFS algebra, according to \cite[Proof of Proposition 3.3]{Riedtmann4}, there is a well-behaved functor $\widetilde{U}: k\Gamma_{\Lambda}\rightarrow \mathrm{ind}\Lambda$ such that it maps each vertex of $\Gamma_{\Lambda}$ to the corresponding indecomposable module and maps each arrow of $\Gamma_{\Lambda}$ to an irreducible morphism, where $k\Gamma_{\Lambda}$ is the path category of the quiver $\Gamma_{\Lambda}$. Moreover, the functor $\widetilde{U}$ induces an isomorphism $U:k\Gamma_{\Lambda}/J\simeq \mathrm{ind}\Lambda$, where $J$ is the ideal of $k\Gamma_{\Lambda}$ generated by the modified mesh relations $\{m_x\mid x\neq (0,3m-1)\}\cup\{m_{(0,3m-1)}+p\}$, where $m_x$ denotes the mesh relation starting at $x$ and $p$ denotes the following path of length $4m$: $(0,3m-1)\rightarrow (1,3m-2)\rightarrow (2,3m-3)\rightarrow (2,3m-2)\rightarrow (3,3m-3)\rightarrow (3,3m-2) \rightarrow\dots\rightarrow(2m-1,3m-2)\rightarrow (2m,3m-3)\rightarrow (2m,3m-2)\rightarrow (2m,3m-1)=(1,3m-1)$. Here is a diagram of the path $p$ in the case $m=3$ (where $\star$ denotes the modules in $\mathcal{C}$ and the path $p$ is marked by the dotted arrows):

$$
\vcenter{
	\xymatrix@R=1.5pc@C=0.6pc {
	&&&&&&&&&&\star\ar@{-->}[dr]&&\bullet\ar[dr]&&\bullet\ar[dr]&&\bullet\ar[dr]\ar[dr]&&\bullet\ar[dr]&&\star\ar[dr]&&\bullet \\
    &&&&&&&&&\bullet\ar[ur]\ar[dr]\ar[r]&\bullet\ar[r]&\bullet\ar[ur]\ar@{-->}[dr]\ar@{-->}[r]&\bullet\ar[r]&\bullet\ar[ur]\ar@{-->}[dr]\ar[r]&\bullet\ar[r]
    &\bullet\ar[ur]\ar@{-->}[dr]\ar[r]
    &\bullet\ar[r]&\bullet\ar[ur]\ar@{-->}[dr]\ar[r]&\bullet\ar[r]&\bullet\ar[ur]\ar@{-->}[dr]\ar[r]&\bullet\ar[r]&\bullet\ar@{-->}[ur] \\	
    &&&&&&&&\bullet\ar[ur]\ar[dr]&&\bullet\ar[ur]\ar[dr]&&\bullet\ar@{-->}[ur]\ar[dr]&&\bullet\ar@{-->}[ur]\ar[dr]&&\bullet\ar@{-->}[ur]\ar[dr]&&\bullet\ar@{-->}[ur]\ar[dr]&&\bullet\ar@{-->}[ur]&& \\
    &&&&&&&\bullet\ar[ur]\ar[dr]&&\bullet\ar[ur]\ar[dr]&&\bullet\ar[ur]\ar[dr]&&\bullet\ar[ur]\ar[dr]&&\bullet\ar[ur]\ar[dr]&&\bullet\ar[ur]\ar[dr]&&\bullet\ar[ur]&&& \\
    &&&&&&\bullet\ar[ur]\ar[dr]&&\bullet\ar[ur]\ar[dr]&&\bullet\ar[ur]\ar[dr]&&\bullet\ar[ur]\ar[dr]&&\bullet\ar[ur]\ar[dr]&&\bullet\ar[ur]\ar[dr]&&\bullet\ar[ur]&&&& \\
    &&&&&\bullet\ar[ur]\ar[dr]&&\bullet\ar[ur]\ar[dr]&&\bullet\ar[ur]\ar[dr]&&\bullet\ar[ur]\ar[dr]&&\bullet\ar[ur]\ar[dr]&&\bullet\ar[ur]\ar[dr]&&\bullet\ar[ur]&&&&& \\
    &&&&\bullet\ar[ur]\ar[dr]&&\bullet\ar[ur]\ar[dr]&&\bullet\ar[ur]\ar[dr]&&\bullet\ar[ur]\ar[dr]&&\bullet\ar[ur]\ar[dr]&&\bullet\ar[ur]\ar[dr]&&\bullet\ar[ur]&&&&&& \\
    &&&\bullet\ar[ur]&&\bullet\ar[ur]&&\bullet\ar[ur]&&\star\ar[ur]&&\star\ar[ur]&&\bullet\ar[ur]&&\bullet\ar[ur]&&&&&&& \\
    &&&0&&1&&2&&3&&4&&0&&1&&&&&
}}$$

Furthermore, $U$ induces an isomorphism $V: k\prescript{}{s}{\Gamma}_{\Lambda}/I\simeq \underline{\mathrm{ind}}\Lambda$, where $I$ is the ideal of $k\prescript{}{s}{\Gamma}_{\Lambda}$ generated by $\{\underline{m_x}\mid x\neq (0,3m-1)\}\cup\{\underline{m_{(0,3m-1)}}+\underline{p}\}$, where $\underline{m_x}$, $\underline{p}$ denote the residue classes of $m_x$, $p$ in $k\prescript{}{s}{\Gamma}_{\Lambda}$ under the natural quotient functor $k\Gamma_{\Lambda}\longrightarrow k\prescript{}{s}{\Gamma}_{\Lambda}$.

We would like to mention an interesting fact on the category $k\prescript{}{s}{\Gamma}_{\Lambda}/I$, although we will not use it in the present paper. It is known that the smallest integer such that each path of length greater than or equal to this integer is zero in $k(\mathbb{Z}D_{3m})$ is $6m-3$ (see \cite[Section 1.1]{BLR}). From the existence of a covering functor $k(\mathbb{Z}D_{3m})\rightarrow k\prescript{}{s}{\Gamma}_{\Lambda}/I$ (see \cite[Section 4]{Riedtmann4} and \cite[Example 3.1c)]{BG}), it is not hard to see that the same holds in the category $k\prescript{}{s}{\Gamma}_{\Lambda}/I$. In particular, ${\rad}(\underline{\mathrm{mod}}\Lambda)$ has nilpotency $6m-3$.

\section{A technical result on stable auto-equivalence \\ of nonstandard RFS algebras}

Let $\mathcal{C}$ be a Krull-Schmidt $k$-additive category. For the definition of the radical ${\rad}(-,-)$ of $\mathcal{C}$ and the irreducible morphisms in $\mathcal{C}$, we refer to \cite[Section 2.2]{Ringel1984}. Recall that if both $X$ and $Y$ are indecomposable, then a morphism $f: X\rightarrow Y$ in $\mathcal{C}$ is irreducible if and only if $f\in {\rad}(X,Y)-{\rad}^2(X,Y)$. We shall frequently use the following simple fact.

\begin{Lem}\label{Lem, lift an irreducible morphism}
If $X$ and $Y$ are two indecomposable nonprojective $A$-modules over a self-injective algebra $A$, and if $f: X\rightarrow Y$ is a morphism in $\mathrm{mod}A$ with the image $\underline{f}$ in $\underline{\mathrm{mod}}A$, then $f$ is an irreducible morphism in $\mathrm{mod}A$ if and only if $\underline{f}$ is an irreducible morphism in $\underline{\mathrm{mod}}A$.
\end{Lem}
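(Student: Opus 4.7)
\medskip
\noindent\textbf{Proof proposal for Lemma \ref{Lem, lift an irreducible morphism}.}
The plan is to use the standard characterization that a morphism between indecomposable objects in a Krull--Schmidt $k$-additive category is irreducible if and only if it lies in $\mathrm{rad}\setminus\mathrm{rad}^{2}$, and then to compare the radicals in $\mathrm{mod}A$ and $\underline{\mathrm{mod}}A$ via the quotient map $\pi:\mathrm{Hom}_{A}(X,Y)\to\underline{\mathrm{Hom}}_{A}(X,Y)$ with kernel $P(X,Y)$, the subgroup of morphisms factoring through a projective module. The first observation I would record is that since $X,Y$ are indecomposable nonprojective over the self-injective algebra $A$, they remain indecomposable in $\underline{\mathrm{mod}}A$, and $X\cong Y$ in $\mathrm{mod}A$ if and only if $X\cong Y$ in $\underline{\mathrm{mod}}A$ (if $fg-\mathrm{id}_{Y}$ factors through a projective, then it lies in $\mathrm{rad}\,\mathrm{End}_{A}(Y)$, so $fg$ is invertible by locality of $\mathrm{End}_{A}(Y)$).

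The key technical step is to prove $P(X,Y)\subseteq\mathrm{rad}^{2}_{\mathrm{mod}A}(X,Y)$. Given any factorization $f=gh$ with $h:X\to P$, $g:P\to Y$, $P$ projective, I would decompose $P=\bigoplus P_i$ into indecomposable projectives and write $f=\sum g_{i}h_{i}$. Each $h_{i}:X\to P_{i}$ cannot be a split monomorphism (otherwise $X$ is an indecomposable summand of the indecomposable projective $P_{i}$, hence projective, contradiction), so $h_{i}\in\mathrm{rad}(X,P_{i})$; dually $g_{i}\in\mathrm{rad}(P_{i},Y)$. Thus $f\in\mathrm{rad}^{2}_{\mathrm{mod}A}(X,Y)$. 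As a consequence, $\underline{\mathrm{End}}_{A}(X)=\mathrm{End}_{A}(X)/P(X,X)$ is local, and more generally $\pi$ sends $\mathrm{rad}_{\mathrm{mod}A}(X,Y)$ onto $\mathrm{rad}_{\underline{\mathrm{mod}}A}(X,Y)$.

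Using these, I would prove the stronger equivalence $f\in\mathrm{rad}^{2}_{\mathrm{mod}A}(X,Y)\Leftrightarrow\underline{f}\in\mathrm{rad}^{2}_{\underline{\mathrm{mod}}A}(X,Y)$, from which the lemma follows immediately. The forward direction is routine: a factorization $f=\sum f_{i}g_{i}$ through indecomposable $Z_{i}$ with $f_{i},g_{i}$ radical projects down, killing the contributions where $Z_{i}$ is projective (these are absorbed into $P(X,Y)$). For the converse, given $\underline{f}=\sum\underline{f}_{i}\underline{g}_{i}$ with $\underline{g}_{i}\in\mathrm{rad}_{\underline{\mathrm{mod}}A}(X,Z_{i})$ and $Z_{i}$ indecomposable nonprojective, I would lift each $\underline{g}_{i}$, $\underline{f}_{i}$ to arbitrary representatives $g_{i}\in\mathrm{Hom}_{A}(X,Z_{i})$, $f_{i}\in\mathrm{Hom}_{A}(Z_{i},Y)$; the point is that \emph{every} such lift already lies in $\mathrm{rad}_{\mathrm{mod}A}$, because when $X\not\cong Z_{i}$ the full Hom space equals the radical, and when $X\cong Z_{i}$ any lift differs from a radical element by something in $P(X,X)\subseteq\mathrm{rad}\,\mathrm{End}_{A}(X)$. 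Then $f-\sum f_{i}g_{i}\in P(X,Y)\subseteq\mathrm{rad}^{2}_{\mathrm{mod}A}(X,Y)$ by the key step above.

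The main obstacle I foresee is exactly this converse direction, where one must ensure that stable radical factorizations lift to genuine radical factorizations; the containment $P(X,Y)\subseteq\mathrm{rad}^{2}_{\mathrm{mod}A}(X,Y)$ is what makes the liftability automatic, and using it twice (once for $P(X,Y)$ itself, and once for the indeterminacy in choosing lifts of each factor) closes the argument.
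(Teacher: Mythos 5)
Your argument is correct and complete: the reduction to the characterization $f$ irreducible $\Leftrightarrow f\in\mathrm{rad}\setminus\mathrm{rad}^{2}$, the key containment $P(X,Y)\subseteq\mathrm{rad}^{2}_{\mathrm{mod}A}(X,Y)$ (valid because $X$ and $Y$ are nonprojective, so the components into and out of each indecomposable projective summand are radical morphisms), and the two-way comparison of $\mathrm{rad}$ and $\mathrm{rad}^{2}$ under the quotient functor are exactly the standard justification of this fact. The paper itself offers no proof --- it records the lemma as a ``simple fact'' to be used freely --- so there is nothing to contrast your route with; your write-up supplies precisely the argument the authors are implicitly relying on, including the one genuinely non-obvious point (that stable radical factorizations lift to radical factorizations because the indeterminacy lies in $P(-,-)\subseteq\mathrm{rad}^{2}$).
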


The following two lemmas give a way to lift a mesh relation in $\underline{\mathrm{mod}}A$ to a mesh relation in $\mathrm{mod}A$ (that is, an almost split sequence in $\mathrm{mod}A$). For any $A$-module $X$, we denote by $l_{A}(X)$ the composition length of $X$.

\begin{Lem}\label{Lem, lift a mesh relation}
Let $A$ be an algebra, $\Gamma_{A}$ be the AR-quiver of $A$. Let

$$\xymatrix{
& Y_{1}\ar[dr]^{\be_{1}} & \\
X\ar[ur]^{\al_{1}}\ar[r]^{\al_{2}}\ar[dr]_{\al_{s}} & Y_{2}\ar[r]^{\be_{2}}\ar@{.}[d] & Z \\
 &Y_{s}\ar[ur]_{\be_{s}} &
}$$
be a mesh in $\Gamma_{A}$, where $X,Y_{1},\cdots,Y_{s},Z$ are indecomposable nonprojectives. Let $f_{i}$ (resp. $g_{i}$) be irreducible morphisms corresponding to $\al_{i}$ (resp. $\be_{i}$) such that $\sum \underline{g_{i}}\underline{f_{i}}=0$ in $\underline{\mathrm{mod}}A$. Then we have the following.
\begin{enumerate}
\item There exist morphisms $f_{i}'$ for $1\leq i\leq s$ such that $\underline{f_{i}'}=\underline{f_{i}}$ in $\underline{\mathrm{mod}}A$ for $1\leq i\leq s$ and $\sum g_{i}f_{i}'=0$ in $\mathrm{mod}A$.

\item If moreover, $A$ is a self-injective algebra, then there exist morphisms $g_{i}'$ for $1\leq i\leq s$ such that $\underline{g_{i}'}=\underline{g_{i}}$ in $\underline{\mathrm{mod}}A$ for $1\leq i\leq s$ and $\sum g_{i}'f_{i}=0$ in $\mathrm{mod}A$.
\end{enumerate}
\end{Lem}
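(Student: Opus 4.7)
The plan is to exploit the fact that the given irreducible morphisms assemble into almost split morphisms, and then use the factorization property of almost split maps to modify them. Since the arrows $\alpha_1,\ldots,\alpha_s$ (resp. $\beta_1,\ldots,\beta_s$) are precisely the arrows of $\Gamma_{A}$ starting at $X$ (resp. ending at $Z$), the irreducible morphisms $f_i$ (resp. $g_i$) form a basis of the corresponding spaces of irreducible morphisms, and so the combined morphisms
$$F := (f_1,\ldots,f_s)^{T} : X \longrightarrow \bigoplus_{i=1}^{s} Y_i, \qquad G := (g_1,\ldots,g_s) : \bigoplus_{i=1}^{s} Y_i \longrightarrow Z$$
are, respectively, a minimal left almost split morphism for $X$ and a minimal right almost split morphism for $Z$. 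This initial identification is essentially the only place the mesh structure enters; the remainder is a routine factorization argument.

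For part (1), the hypothesis $\sum \underline{g_i}\,\underline{f_i}=0$ gives $GF = vu$ with $u : X\to P$ and $v : P \to Z$ for some projective $P$. Since $Z$ is indecomposable nonprojective, $v$ cannot be a split epimorphism (otherwise $Z$ would be a direct summand of $P$, and hence projective). By the right almost split property of $G$ we obtain $w = (w_1,\ldots,w_s)^{T} : P \to \bigoplus Y_i$ with $v = Gw$. Substituting gives $\sum g_i(f_i - w_i u) = 0$, so setting $f_i' := f_i - w_i u$ yields $\underline{f_i'}=\underline{f_i}$ (since each $w_i u$ factors through $P$) together with $\sum g_i f_i' = 0$ in $\mathrm{mod}A$.

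Part (2) proceeds dually: the self-injectivity of $A$ ensures that the indecomposable nonprojective $X$ is also noninjective, and writing $GF = vu$ as above, the morphism $u : X \to P$ cannot be a split monomorphism (else $X$ would be injective). By the left almost split property of $F$ we may then write $u = u'F = \sum u_i' f_i$ for suitable $u_i' : Y_i \to P$; the replacement $g_i' := g_i - v u_i'$ satisfies $\underline{g_i'}=\underline{g_i}$ and $\sum g_i' f_i = 0$ in $\mathrm{mod}A$. The only point that might need extra attention is the initial claim that $F$ and $G$ are almost split, but this is standard: different choices of basis for the relevant irreducible morphism spaces give isomorphic almost split morphisms, so the factorization property carries over to our $F$ and $G$.
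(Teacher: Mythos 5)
Your proof is correct, and its overall shape agrees with the paper's: write $\sum g_if_i=-vu$ with $u\colon X\to P$, $v\colon P\to Z$ and $P$ projective, factor $v$ through the combined morphism $(g_1,\dots,g_s)\colon \bigoplus Y_i\to Z$, and absorb the resulting correction terms into the $f_i$ (dually for part (2)). The two arguments diverge only in how that factorization is justified. The paper shows $(g_1,\dots,g_s)$ is irreducible, deduces from the length inequality $l_A(\bigoplus Y_i)>l_A(Z)$ (read off the almost split sequence ending at $Z$) that it is surjective, and then lifts $v$ through this epimorphism using projectivity of $P$; part (2) is handled dually via the injective envelope of $X$. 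You instead invoke the minimal right almost split property of $(g_1,\dots,g_s)$ and observe that $v$ is not a split epimorphism because $Z$ is nonprojective; dually, $u$ is not a split monomorphism because $X$ is noninjective over the self-injective algebra $A$, so it factors through the left almost split map $(f_1,\dots,f_s)^{T}$. Both routes rest on standard AR-theory. Yours avoids the length computation and the mono/epi dichotomy for irreducible maps, but it does require the preliminary fact that arbitrary choices of irreducible morphisms along a mesh assemble into minimal almost split morphisms; this is standard and valid here since $k$ is algebraically closed and each $Y_i$ occurs with multiplicity one, so each space ${\rad}(X,Y_i)/{\rad}^2(X,Y_i)$ is one-dimensional and any irreducible $f_i$ represents a basis of it. Either justification is acceptable, and your version arguably generalizes more readily to meshes where the middle terms have higher multiplicity.
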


\begin{proof} The assumption shows that we can assume that $\sum g_{i}f_{i}+vu=0$, where $u:X\rightarrow P$, $v:P\rightarrow Z$, and $P$ a projective module. Since $Y_{1},\cdots,Y_{s}$ are pairwise nonisomorphic,  it is easy to verify that $(g_{1},\cdots,g_{s}):Y_{1}\oplus\cdots\oplus Y_{s}\rightarrow Z$ is irreducible. Since there exists an almost split sequence $0\rightarrow X\rightarrow Y_{1}\oplus\cdots\oplus Y_{s}\rightarrow Z\rightarrow 0$, $l_{A}(Y_{1}\oplus\cdots\oplus Y_{s})>l_{A}(Z)$. Since irreducible morphisms are injective or surjective, we have that $(g_{1},\cdots,g_{s})$ is surjective. Since $P$ is projective, $v$ factors through $(g_{1},\cdots,g_{s})$. Let $v=\sum g_{i}w_{i}$, then $\sum g_{i}(f_{i}+w_{i}u)=0$. Let $f_{i}'=f_{i}+w_{i}u$, we have $\underline{f_{i}'}=\underline{f_{i}}$ in $\underline{\mathrm{mod}}A$ for $1\leq i\leq s$ and $\sum g_{i}f_{i}'=0$ in $\mathrm{mod}A$. This proves (1). Notice that projective modules are also injective over a self-injective algebra, the proof of (2) is dual to that of (1), using the injective envelope of $X$.

\end{proof}

Modify the proof of Lemma \ref{Lem, lift a mesh relation}, we have the following lemma.

\begin{Lem}\label{Lem, lift a mesh relations}
	Let $A$ be an algebra, $\Gamma_{A}$ be the AR-quiver of $A$. Let

	$$\xymatrix{
		& Y_{1}\ar[dr]^{\be_{1}} & \\
		X\ar[ur]^{\al_{1}}\ar[r]^{\al_{2}}\ar[dr]_{\al_{s}} & Y_{2}\ar[r]^{\be_{2}}\ar@{.}[d] & Z \\
		&Y_{s}\ar[ur]_{\be_{s}} &
	}$$
	be a mesh in $\Gamma_{A}$, where $X,Y_{1},\cdots,Y_{s},Z$ are indecomposable nonprojectives. Let $f_{i}$ (resp. $g_{i}$) be irreducible morphisms corresponding to $\al_{i}$ (resp. $\be_{i}$) such that $\sum \underline{g_{i}}\underline{f_{i}}=0$ in $\underline{\mathrm{mod}}A$. Then we have the following.
\begin{enumerate}	
\item If there exists some $t$ ($1\leq t\leq s$) with $\sum_{i=1}^{t} l_{A}(Y_{i})>l_{A}(Z)$ (or equivalently $(g_{1},\cdots,g_{t})$ is an epimorphism), then there exist morphisms $f_{i}'$ for $1\leq i\leq t$ such that $\underline{f_{i}'}=\underline{f_{i}}$ in $\underline{\mathrm{mod}}A$ for $1\leq i\leq t$ and $\sum_{i=1}^{t} g_{i}f_{i}'+\sum_{i=t+1}^{s} g_{i}f_{i}=0$ in $\mathrm{mod}A$.
	
\item If moreover, $A$ is a self-injective algebra such that there exists some $t$ ($1\leq t\leq s$) with $\sum_{i=1}^{t} l_{A}(Y_{i})>l_{A}(X)$ (or equivalently $(f_{1},\cdots,f_{t})$ is a monomorphism), then there exist morphisms $g_{i}'$ for $1\leq i\leq t$ such that $\underline{g_{i}'}=\underline{g_{i}}$ in $\underline{\mathrm{mod}}A$ for $1\leq i\leq t$ and $\sum_{i=1}^{t} g_{i}'f_{i}+\sum_{i=t+1}^{s} g_{i}f_{i}=0$ in $\mathrm{mod}A$.
\end{enumerate}
\end{Lem}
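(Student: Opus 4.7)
The plan is to repeat the proof of Lemma \ref{Lem, lift a mesh relation} almost verbatim, replacing the full map $(g_{1},\dots,g_{s})$ by the partial map $(g_{1},\dots,g_{t})$ at the one place it is really used. Starting from $\sum \underline{g_{i}}\,\underline{f_{i}}=0$ in $\underline{\mathrm{mod}}A$, I would first produce a relation $\sum_{i=1}^{s}g_{i}f_{i}+vu=0$ in $\mathrm{mod}A$, where $u:X\to P$ and $v:P\to Z$ factor the obstruction through a projective module $P$, exactly as in the previous proof.

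The only change is in the subsequent lifting step. In Lemma \ref{Lem, lift a mesh relation}, surjectivity of $(g_{1},\dots,g_{s})$ combined with projectivity of $P$ produced a factorization $v=\sum g_{i}w_{i}$. Here I would instead invoke the new hypothesis that $(g_{1},\dots,g_{t})$ is already surjective to obtain $v=\sum_{i=1}^{t}g_{i}w_{i}$ for some $w_{i}:P\to Y_{i}$. Setting $f_{i}':=f_{i}+w_{i}u$ for $1\leq i\leq t$ and leaving $f_{t+1},\dots,f_{s}$ unchanged yields
\[
\sum_{i=1}^{t} g_{i}f_{i}' + \sum_{i=t+1}^{s} g_{i}f_{i} \;=\; \sum_{i=1}^{s} g_{i}f_{i} + vu \;=\; 0,
\]
and since each $w_{i}u$ factors through the projective $P$, $\underline{f_{i}'}=\underline{f_{i}}$ in $\underline{\mathrm{mod}}A$. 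This completes (1). Part (2) is exactly dual: using self-injectivity, $P$ may equally be chosen to come from an injective envelope of $X$, and the hypothesis that $(f_{1},\dots,f_{t})$ defines a monomorphism $X\to\bigoplus_{i=1}^{t}Y_{i}$ allows $u$ to be written as $u=\sum_{i=1}^{t}w_{i}f_{i}$ for suitable $w_{i}:Y_{i}\to P$; then $g_{i}':=g_{i}+vw_{i}$ for $i\leq t$ gives the required mesh relation.

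The equivalence of the two formulations of each hypothesis is a side remark that I would justify briefly: surjectivity of $(g_{1},\dots,g_{t})$ forces $\sum_{i=1}^{t}l_{A}(Y_{i})\geq l_{A}(Z)$, with equality ruled out because it would make the map an isomorphism onto the indecomposable $Z$, contradicting either indecomposability (if $t\geq 2$) or irreducibility of $g_{1}$ between indecomposables (if $t=1$); conversely, the length inequality combined with the fact that $(g_{1},\dots,g_{t})$ is either injective or surjective (being a radical morphism inherited from the almost split sequence) yields surjectivity. I expect no substantive obstacle in the argument: the entire content is the observation that the epimorphism $(g_{1},\dots,g_{s})$ in Lemma \ref{Lem, lift a mesh relation} can be replaced by any surjective partial subsum in the key lifting step, by projectivity (resp.\ injectivity) of $P$.
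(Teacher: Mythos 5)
Your proposal is correct and is exactly the modification the paper intends: the paper gives no separate proof of this lemma, saying only that one should modify the proof of Lemma~\ref{Lem, lift a mesh relation}, and your single change --- factoring $v$ (resp.\ extending $u$) through the partial epimorphism $(g_{1},\dots,g_{t})$ (resp.\ the partial monomorphism $(f_{1},\dots,f_{t})^{T}$) using projectivity (resp.\ injectivity) of $P$ --- is precisely that modification. One small caveat on your side remark: the mono-or-epi dichotomy for $(g_{1},\dots,g_{t})$ comes from its being \emph{irreducible} (it is complemented by $(g_{t+1},\dots,g_{s})$ to the minimal right almost split morphism), not merely from its being a radical morphism, since radical morphisms in general need be neither injective nor surjective.
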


The following lemma, which is inspired by some idea from the proof of \cite[Proposition 3.3]{Asashiba2003}, gives a sufficient condition for a functor $\psi:\mathrm{mod}A\rightarrow \mathrm{mod}A$ to be an equivalence, where $A$ is an algebra of finite representation type.

\begin{Lem}\label{Lem, equivalence}
Let $A$ be an algebra of finite representation type, $\psi:\mathrm{mod}A\rightarrow \mathrm{mod}A$ be a $k$-functor  such that
\begin{enumerate}
\item $\psi$ preserves the radical of $\mathrm{mod}A$: for every pair $(X,Y)$ of $A$-modules, $$\psi({\rad}_{A}(X,Y))\subseteq {\rad}_{A}(\psi(X),\psi(Y));$$

\item $\psi$ preserves the indecomposability and irreducible morphisms between indecomposables;

\item $\psi$ reflects isomorphism classes: for $A$-modules $X$ and $Y$, if $\psi(X)\cong\psi(Y)$, then $X\cong Y$.
\end{enumerate}
Then $\psi$ is an equivalence.
\end{Lem}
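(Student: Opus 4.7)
My plan is to show that $\psi$ is both essentially surjective and fully faithful, and hence an equivalence. For essential surjectivity, note that $\psi$ is additive (being a $k$-functor), and conditions (2) and (3) together imply that $\psi$ induces an injection on the finite set of isoclasses of indecomposable $A$-modules, which is thus a permutation $\sigma$. Combined with additivity and the Krull--Schmidt property, this yields essential surjectivity on all of $\mathrm{mod}A$. For full faithfulness, additivity reduces the problem to showing $\psi: \mathrm{Hom}_A(X, Y) \to \mathrm{Hom}_A(\psi X, \psi Y)$ is bijective for indecomposable $X, Y$. Because $A$ is of finite representation type, the Harada--Sai lemma gives nilpotency of the radical on any indecomposable pair, so we have a finite filtration $\mathrm{Hom}_A(X, Y) \supseteq \rad(X, Y) \supseteq \rad^2(X, Y) \supseteq \cdots \supseteq 0$, which $\psi$ preserves thanks to (1) and functoriality. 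Hence it suffices to show that the induced $k$-linear map $\bar{\psi}_n$ on each graded piece $\rad^n/\rad^{n+1}$ is bijective for every $n \geq 0$ and every indecomposable pair $(X, Y)$.

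I would proceed by induction on $n$. For $n \in \{0, 1\}$, I first establish that $\bar{\psi}_n$ is injective: at $n = 0$ this is immediate from the preservation of identities, and at $n = 1$ directly from condition (2), since irreducible morphisms between indecomposables are exactly the nonzero elements of $\rad/\rad^2$. Injectivity is then upgraded to bijectivity via the counting identity
\[
\sum_{i, j} \dim \rad^n/\rad^{n+1}(M_i, M_j) \;=\; \sum_{i, j} \dim \rad^n/\rad^{n+1}(\psi M_i, \psi M_j),
\]
which holds because $\sigma$ is a permutation: the sum of the pointwise inequalities must be termwise equality. For the inductive step $n \geq 2$, let $\pi: Z = \bigoplus_i Z_i \to Y$ be the minimal right almost split morphism for $Y$; a standard AR-theoretic fact yields $\rad^n(X, Y) = \pi \cdot \rad^{n-1}(X, Z)$. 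The bijectivity of $\bar{\psi}_1$ together with the equality $d(W, Y) = d(\psi W, \psi Y)$ of AR-arrow numbers (a consequence of the $n=1$ case) ensures that $\psi(\pi): \bigoplus_i \psi Z_i \to \psi Y$ is again a minimal right almost split morphism for $\psi Y$, so the analogous identity $\rad^n(\psi X, \psi Y) = \psi(\pi)\cdot \rad^{n-1}(\psi X, \psi Z)$ holds. Given $g \in \rad^n(\psi X, \psi Y)$, I write $g = \psi(\pi) h'$ with $h' \in \rad^{n-1}(\psi X, \psi Z)$, lift $h' \equiv \psi(h) \pmod{\rad^n}$ via the inductive hypothesis applied componentwise to the pairs $(X, Z_i)$ at level $n-1$, and note that $\pi h \in \rad^n(X, Y)$ satisfies $\psi(\pi h) \equiv g \pmod{\rad^{n+1}}$. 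Thus $\bar{\psi}_n$ is surjective, and counting again forces bijectivity.

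Combining these, $\psi$ is essentially surjective and fully faithful, hence an equivalence. The main obstacle is the inductive step at level $n \geq 2$, specifically the structural claim that $\psi$ carries minimal right almost split morphisms to minimal right almost split morphisms; this is what allows the factorisation argument to propagate up the radical filtration. That claim hinges on the bijectivity of $\bar{\psi}_1$, which itself requires both condition (2) (for injectivity on irreducibles) and the dimension-counting argument (to upgrade to bijectivity), so the early consequences of (2) and $\sigma$ must be extracted carefully before the induction can be set up.
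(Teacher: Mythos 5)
Your proof is correct, but it runs on a different engine than the paper's. The paper never analyses the graded pieces $\mathrm{rad}^{n}/\mathrm{rad}^{n+1}$ pair by pair; instead it proves the single identity $\psi(\mathrm{rad}_{A}^{n})+\mathrm{rad}_{A}^{n+1}=\mathrm{rad}_{A}^{n}$ for all $n$ by a purely formal manipulation of products of ideals (from $\mathrm{rad}^{n+1}=(\psi(\mathrm{rad}^{n})+\mathrm{rad}^{n+1})(\psi(\mathrm{rad})+\mathrm{rad}^{2})$ and functoriality one gets $\mathrm{rad}^{n+1}\subseteq\psi(\mathrm{rad}^{n+1})+\mathrm{rad}^{n+2}$), then invokes nilpotency of $\mathrm{rad}_{A}$ to conclude $\psi(\mathrm{rad}_{A})=\mathrm{rad}_{A}$ and hence fullness, and finishes with one global dimension count over $\bigoplus_{X,Y\in\mathrm{ind}A}\mathrm{Hom}_{A}(X,Y)$ to get faithfulness. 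Its main representation-theoretic input beyond the quiver automorphism induced by $\psi$ is that $\dim_{k}\mathrm{rad}(X,Y)/\mathrm{rad}^{2}(X,Y)\leq 1$ in finite representation type, used only to seed the base case $\psi(\mathrm{rad})+\mathrm{rad}^{2}=\mathrm{rad}$. Your route instead factors $\mathrm{rad}^{n}(X,Y)$ through the minimal right almost split morphism ending at $Y$, which forces you to establish the extra structural claim that $\psi$ carries minimal right almost split morphisms to minimal right almost split morphisms. That claim is true, and your justification (bijectivity of $\bar{\psi}_{1}$ together with the matching of arrow multiplicities under the permutation $\sigma$) is the right one, but be aware that it silently invokes the converse characterization that a morphism into $\psi Y$ whose components represent bases of all the spaces $\mathrm{rad}(-,\psi Y)/\mathrm{rad}^{2}(-,\psi Y)$ is automatically minimal right almost split; this is standard but itself requires an argument (compose the genuine minimal right almost split morphism with a unit of the endomorphism ring of its source to reach the candidate). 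In exchange, your levelwise counting never uses one-dimensionality of the $\mathrm{Irr}$-spaces, so your argument would survive multiple arrows in the AR-quiver; the paper's version is the more economical of the two for the situation actually needed here.
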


\begin{proof}
First we claim that $\psi({\rad}_{A})+{\rad}^{2}_{A}={\rad}_{A}$. Since $\psi({\rad}_{A})\subseteq {\rad}_{A}$, $\psi({\rad}_{A})+{\rad}^{2}_{A}\subseteq {\rad}_{A}$. To show that ${\rad}_{A} \subseteq \psi({\rad}_{A})+{\rad}^{2}_{A}$, let $X$, $Y$ be indecomposable $A$-modules and $f\in {\rad}_{A}(X,Y)$. If $f\in {\rad}^{2}_{A}$, then $f\in \psi({\rad}_{A})+{\rad}^{2}_{A}$. If $f\notin {\rad}^{2}_{A}$, since $X$, $Y$ are indecomposable, $f$ is irreducible. Since $A$ is of finite representation type, by (2) and (3),  $\psi$ induces a quiver automorphism of $\Gamma_{A}$, where $\Gamma_{A}$ is the AR-quiver of $A$. Then there exists some indecomposable $A$-modules $Z$, $W$ and  an irreducible morphism $g:Z\rightarrow W$ such that $\psi(g)$ is an irreducible morphism from $X$ to $Y$. Since $A$ is of finite representation type, $dim_{k}({\rad}_{A}(X,Y)/{\rad}^{2}_{A}(X,Y))\leq 1$. Then there exists some $\lambda\in k^{*}$ and $h\in {\rad}^{2}_{A}(X,Y)$ such that $f=\lambda\psi(g)+h=\psi(\lambda g)+h$. Since $\lambda g\in {\rad}_{A}(Z,W)$ and $h\in {\rad}^{2}_{A}(X,Y)$, $f\in  \psi({\rad}_{A})+{\rad}^{2}_{A}$.
	
Inductively, we show that $\psi({\rad}^{n}_{A})+{\rad}^{n+1}_{A}={\rad}^{n}_{A}$ for all $n\geq 1$. When $n=1$ it is true. Assume that $\psi({\rad}^{n}_{A})+{\rad}^{n+1}_{A}={\rad}^{n}_{A}$ for some $n\geq 1$, then ${\rad}^{n+1}_{A}=({\rad}^{n}_{A})({\rad}_{A})=(\psi({\rad}^{n}_{A})+{\rad}^{n+1}_{A})(\psi({\rad}_{A})+{\rad}^{2}_{A})= \psi({\rad}^{n+1}_{A})+\psi({\rad}^{n}_{A}){\rad}^{2}_{A}+{\rad}^{n+1}_{A}\psi({\rad}_{A})+{\rad}^{n+3}_{A}$. Since $\psi({\rad}_{A})\subseteq {\rad}_{A}$, $\psi({\rad}^{n}_{A})\subseteq {\rad}^{n}_{A}$. Then $\psi({\rad}^{n}_{A}){\rad}^{2}_{A}+{\rad}^{n+1}_{A}\psi({\rad}_{A})+{\rad}^{n+3}_{A}\subseteq {\rad}^{n+2}_{A}$. Hence ${\rad}^{n+1}_{A}\subseteq \psi({\rad}^{n+1}_{A})+{\rad}^{n+2}_{A}$. Since $\psi({\rad}^{n+1}_{A})\subseteq {\rad}^{n+1}_{A}$ and ${\rad}^{n+2}_{A}\subseteq {\rad}^{n+1}_{A}$, $ \psi({\rad}^{n+1}_{A})+{\rad}^{n+2}_{A}={\rad}^{n+1}_{A}$.
	
We now show that $\psi$ is full. Since $\psi({\rad}^{n}_{A})+{\rad}^{n+1}_{A}={\rad}^{n}_{A}$ for all $n\geq 1$, by induction we have $\psi({\rad}_{A})+{\rad}^{n+1}_{A}={\rad}_{A}$ for all $n\geq 1$. Since $A$ is of finite representation type, ${\rad}_{A}$ is nilpotent. Then $\psi({\rad}_{A})={\rad}_{A}$. For indecomposable $A$-module $X$, $Y$, if $X\ncong Y$, then ${\rad}_{A}(\psi(X),\psi(Y))=Hom_{A}(\psi(X),\psi(Y))$ and $\psi:Hom_{A}(X,Y)\rightarrow Hom_{A}(\psi(X),\psi(Y))$ is an epimorphism. If $X\cong Y$, for $f\in End_{A}(\psi(X))$, $f=\lambda \cdot id+g$ for some $\lambda\in k$ and $g\in {\rad}_{A}(\psi(X),\psi(X))$. Since $\psi({\rad}_{A})={\rad}_{A}$,  $\psi:End_{A}(X)\rightarrow End_{A}(\psi(X))$ is an epimorphism and $\psi:Hom_{A}(X,Y)\rightarrow Hom_{A}(\psi(X),\psi(Y))$ is an epimorphism. Then $\psi$ is full.
	
Since $\psi$ is full, $\psi$ induces an epimorphism
$$\bigoplus\limits_{X,Y\in \mathrm{ind}A}Hom_{A}(X,Y)\rightarrow \bigoplus\limits_{X,Y\in \mathrm{ind}A}Hom_{A}(\psi(X),\psi(Y)).$$
Since $\psi$ induces a quiver automorphism of $\Gamma_{A}$, $\psi$ is dense and \begin{equation*} dim_{k}\bigoplus\limits_{X,Y\in \mathrm{ind}A}Hom_{A}(X,Y)=dim_{k}\bigoplus\limits_{X,Y\in \mathrm{ind}A}Hom_{A}(\psi(X),\psi(Y)). \end{equation*} Hence $\psi$ induces an isomorphism $\bigoplus\limits_{X,Y\in \mathrm{ind}A}Hom_{A}(X,Y)\rightarrow \bigoplus\limits_{X,Y\in \mathrm{ind}A}Hom_{A}(\psi(X),\psi(Y))$. This implies that $\psi$ is full, faithful and dense. Therefore $\psi$ is an equivalence.
	\end{proof}

\begin{Lem}\label{Lem, irreducible morphism}
Let $P$ be an indecomposable projective-injective module over an algebra $A$ with $l_{A}(P)>2$. Let $f:{\rad}P\rightarrow P$, $g:P\rightarrow P/{\soc}P$ be $A$-module homomorphisms with $im(gf)={\rad}P/{\soc}P$. Then both $f$ and $g$ are irreducible morphisms.
\end{Lem}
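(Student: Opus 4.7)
The plan is to factor $f$ and $g$ through the canonical irreducible morphisms $\iota\colon \rad P\hookrightarrow P$ (the sink map of the indecomposable projective $P$) and $\pi\colon P\twoheadrightarrow P/\soc P$ (the source map of the indecomposable injective $P$), and then show the factoring endomorphisms of $P$ are automorphisms.

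First, since $P$ is injective, applying $Hom_{A}(-,P)$ to the short exact sequence $0\to \rad P\to P\to P/\rad P\to 0$ yields an exact sequence whose last nontrivial term $Ext^{1}_{A}(P/\rad P,P)$ vanishes, so every morphism $\rad P\to P$ extends to an endomorphism of $P$. Dually, since $P$ is projective, $Ext^{1}_{A}(P,\soc P)=0$, so every morphism $P\to P/\soc P$ lifts through $\pi$. Hence one can write $f=\tilde f\,\iota$ and $g=\pi\,\tilde g$ for some $\tilde f,\tilde g\in End_{A}(P)$. Setting $h_P:=\tilde g\tilde f$, one obtains $gf=\pi\,h_P\,\iota$, and the hypothesis $im(gf)=\rad P/\soc P=\pi(\rad P)$ becomes the equality of submodules $h_P(\rad P)+\soc P=\rad P$.

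The key step is to prove that $h_P$ is an automorphism of $P$. Since $End_{A}(P)$ is local, the only alternative is $h_P\in \rad\, End_{A}(P)$, in which case $h_P(P)$ is a proper submodule of the indecomposable $P$ and hence is contained in $\rad P$; this forces $h_P(\rad P)=\rad(A)\cdot h_P(P)\subseteq \rad^{2}P$. So it suffices to rule out $\rad^{2}P+\soc P=\rad P$; this equality would give $\rad(P/\soc P)=\rad^{2}(P/\soc P)$, whence $P/\soc P$ is semisimple by Nakayama, and since its top is the simple module $P/\rad P$ one deduces $P/\soc P$ is itself simple, i.e.\ $\rad P=\soc P$, contradicting $l_{A}(\rad P)=l_{A}(P)-1\ge 2$. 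Thus $h_P$ is an automorphism, and locality of $End_{A}(P)$ then forces each of $\tilde f$ and $\tilde g$ to be an automorphism as well.

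Finally, $\iota$ is irreducible (being the minimal right almost split morphism to the indecomposable $P$; as $0\neq \rad P\neq P$ it is neither split mono nor split epi) and $\pi$ is irreducible by the dual argument; since composition with an automorphism on either side preserves irreducibility, $f=\tilde f\,\iota$ and $g=\pi\,\tilde g$ are both irreducible. The delicate point of the argument is the length bookkeeping behind $\rad^{2}P+\soc P\subsetneq \rad P$: the hypothesis $l_{A}(P)>2$ is exactly what rules out the degenerate case $\rad P=\soc P$, in which $h_P$ could be non-invertible and the conclusion would fail.
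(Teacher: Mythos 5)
Your proof is correct, but it follows a genuinely different route from the paper's. The paper reduces the statement to showing that $f$ is a monomorphism and $g$ is an epimorphism (which suffices because a monomorphism $\rad P\to P$, resp.\ an epimorphism $P\to P/\soc P$, differs from the canonical inclusion, resp.\ projection, by an automorphism, by a length count): it rules out non-surjectivity of $g$ by observing that otherwise $im(gf)\subseteq \rad(\rad P/\soc P)\subsetneq \rad P/\soc P$, and it proves injectivity of $f$ by passing to the induced maps $f'\colon \rad P\to\rad P$ and $g'\colon \rad P\to \rad P/\soc P$ and showing $g'$ is an \emph{essential} epimorphism because $\soc P\subseteq \rad^{2}P$ (this is where $l_{A}(P)>2$ enters). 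You instead use the injectivity and projectivity of $P$ to factor $f=\tilde f\iota$ and $g=\pi\tilde g$ through the canonical irreducible morphisms, translate the hypothesis into $h_{P}(\rad P)+\soc P=\rad P$ for $h_{P}=\tilde g\tilde f\in End_{A}(P)$, and exploit locality of $End_{A}(P)$ together with a Nakayama argument to conclude that $h_{P}$, hence $\tilde f$ and $\tilde g$, are automorphisms. The two arguments use the hypothesis $l_{A}(P)>2$ at essentially the same point (your exclusion of $\rad^{2}P+\soc P=\rad P$ is equivalent to the paper's observation that $\soc P\subseteq\rad^{2}P\subsetneq\rad P$), but yours buys a cleaner conceptual statement --- every pair $(f,g)$ satisfying the hypothesis is the canonical pair $(\iota,\pi)$ up to automorphisms of $P$ --- at the cost of invoking the standard facts that $\rad P\to P$ and $P\to P/\soc P$ are (minimal right/left almost split, hence) irreducible, whereas the paper's argument is more self-contained and only needs the elementary criterion that irreducible maps between these particular modules are exactly the monos, resp.\ epis.
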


\begin{proof}
It is sufficient to show that $f$ is injective and $g$ is surjective. If $g$ is not surjective, since ${\rad}P/{\soc}P$ is the unique maximal submodule of $P/{\soc}P$, $im(g)\subseteq {\rad}P/{\soc}P$. Since $im(f)\subseteq {\rad}P$ by the same reason, $im(gf)\subseteq g({\rad}P)\subseteq {\rad}({\rad}P/{\soc}P)\subsetneqq {\rad}P/{\soc}P$, a contradiction. Then $g$ is surjective.

Since $im(f)\subseteq {\rad}P$, we have a sequence of morphisms ${\rad}P\xrightarrow[]{f'} {\rad}P\xrightarrow[]{g'} {\rad}P/{\soc}P$, where $f'$, $g'$ are morphisms induced from $f$, $g$ respectively. Since $l_{A}(P)>2$, ${\rad}^{2}(P)\neq 0$. Then ${\soc}P\subseteq {\rad}^{2}(P)$. Since $g$ is surjective, $g'$ is surjective and $ker(g')={\soc}P\subseteq {\rad}^{2}(P)={\rad}({\rad}P)$. Then $g'$ is an essential epimorphism. Since $im(gf)={\rad}P/{\soc}P$, $g'f'$ is an epimorphism. Then $f'$ is an epimorphism, which implies that $f'$ an isomorphism. Therefore $f$ is injective.
\end{proof}

Recall from Section 2 that for the representative algebra $\Lambda$ of nonstandard RFS algebras of type $(D_{3m},1/3,1)$, there is an isomorphism $U:k\Gamma_{\Lambda}/J\rightarrow \mathrm{ind}\Lambda$ such that $U$ maps each vertex of $\Gamma_{\Lambda}$ to the corresponding indecomposable module and maps each arrow of $\Gamma_{\Lambda}$ to an irreducible morphism. Let $q: k\Gamma_{\Lambda}/J\rightarrow k\prescript{}{s}{\Gamma}_{\Lambda}/I$ and $q: \mathrm{ind}\Lambda\rightarrow \underline{\mathrm{ind}}\Lambda$ be the natural quotient functors, respectively. Since $U$ restricts to an isomorphism between the subcategory of projective vertices of $k\Gamma_{\Lambda}/J$ and the subcategory of projective modules in $\mathrm{ind}\Lambda$, $U$ induces naturally an isomorphism $V:k\prescript{}{s}{\Gamma}_{\Lambda}/I\rightarrow\underline{\mathrm{ind}}\Lambda$, which also maps each arrow of $\prescript{}{s}{\Gamma}_{\Lambda}$ to an irreducible morphism. Let $\phi': \underline{\mathrm{ind}}\Lambda\rightarrow \underline{\mathrm{ind}}\Lambda$ be an isomorphism, which induces an isomorphism $\phi'_{0}: k\prescript{}{s}{\Gamma}_{\Lambda}/I\rightarrow k\prescript{}{s}{\Gamma}_{\Lambda}/I$. Then we have the following diagram with three commutative faces:
	
	$$\xymatrix@R=1.3pc{
	k\Gamma_{\Lambda}/J \ar[rd]^{U}_{\sim} \ar[dd]_{q}& & k\Gamma_{\Lambda}/J\ar[rd]^{U}_{\sim}\ar@{.>}[dd]_{q} & \\
	& \mathrm{ind}\Lambda\ar[dd]_(0.3){q} & & \mathrm{ind}\Lambda \ar[dd]_{q}\\
	k\prescript{}{s}{\Gamma}_{\Lambda}/I \ar[rd]^{V}_{\sim}\ar@{.>}[rr]_(0.7){\phi'_{0}} & & k\prescript{}{s}{\Gamma}_{\Lambda}/I \ar@{.>}[rd]^{V}_{\sim}& \\
	& \underline{\mathrm{ind}}\Lambda\ar[rr]_{\phi'} & & \underline{\mathrm{ind}}\Lambda. \\
}$$
$$\mbox{Figure }1$$

\medskip
We often use the following fact without mentioning.

\begin{Lem}\label{Lem, preserve irreducible morphism}
Under the above assumptions, let $\alpha$ be an arrow of $\Gamma_{\Lambda}$ which is also considered as a morphism in $k\Gamma_{\Lambda}/J$, and let $\sigma(\alpha)$ be a
morphism in $k\Gamma_{\Lambda}/J$ such that $q(\sigma(\alpha)) = \phi'_{0}(q(\alpha))$. Then $U(\sigma(\alpha))$ is an irreducible morphism
in $\mathrm{ind}\Lambda$.
\end{Lem}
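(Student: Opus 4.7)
The plan is a straightforward diagram chase through the cube in Figure 1, sandwiched between two applications of Lemma \ref{Lem, lift an irreducible morphism}. The statement is to be understood in the situation where both endpoints of $\alpha$ lie among the nonprojective vertices of $\Gamma_{\Lambda}$, since this is the only case in which the hypothesis $q(\sigma(\alpha))=\phi'_{0}(q(\alpha))$ carries nontrivial information about $\sigma(\alpha)$.

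First I would note that by the construction of the isomorphism $U:k\Gamma_{\Lambda}/J\to \mathrm{ind}\Lambda$, the arrow $\alpha$ is sent to an irreducible morphism $U(\alpha)$ between two indecomposable nonprojective $\Lambda$-modules. Lemma \ref{Lem, lift an irreducible morphism} then guarantees that its residue class $\underline{U(\alpha)}=q(U(\alpha))$ is an irreducible morphism in $\underline{\mathrm{ind}}\Lambda$. Next, the two commutative faces of the cube in Figure 1 that are relevant read $V\circ q=q\circ U$ (from the construction of $V$) and $\phi'\circ V=V\circ \phi'_{0}$ (from the definition of $\phi'_{0}$). Combining these identities with the hypothesis on $\sigma(\alpha)$, I compute
\[
q(U(\sigma(\alpha)))=V(q(\sigma(\alpha)))=V(\phi'_{0}(q(\alpha)))=\phi'(V(q(\alpha)))=\phi'(q(U(\alpha))).
\]
Since $\phi'$ is an equivalence of Krull-Schmidt $k$-categories, it preserves irreducibility; hence $\phi'(q(U(\alpha)))$, and therefore $q(U(\sigma(\alpha)))=\underline{U(\sigma(\alpha))}$, is irreducible in $\underline{\mathrm{ind}}\Lambda$.

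Applying Lemma \ref{Lem, lift an irreducible morphism} in the reverse direction to the morphism $U(\sigma(\alpha))$, which also connects the two indecomposable nonprojective modules $U(s(\alpha))$ and $U(t(\alpha))$, then yields the desired irreducibility of $U(\sigma(\alpha))$ in $\mathrm{ind}\Lambda$. I do not foresee any real obstacle, since the proof reduces to one computation in the cube. The only subtlety is bookkeeping: I must ensure that both endpoints of $\alpha$ are nonprojective so that both appeals to Lemma \ref{Lem, lift an irreducible morphism} are legitimate, and I must keep track of the fact that the two vertical quotient functors (from $k\Gamma_{\Lambda}/J$ to $k\prescript{}{s}{\Gamma}_{\Lambda}/I$ and from $\mathrm{ind}\Lambda$ to $\underline{\mathrm{ind}}\Lambda$) are both denoted by $q$, so that the square $V\circ q=q\circ U$ is applied correctly at each step.
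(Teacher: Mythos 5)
Your proposal is correct and follows essentially the same route as the paper's proof: the identical diagram chase $qU(\sigma(\alpha))=Vq(\sigma(\alpha))=V\phi'_{0}q(\alpha)=\phi'Vq(\alpha)=\phi'qU(\alpha)$ through the commutative faces of Figure 1, combined with the fact that the equivalence $\phi'$ preserves irreducibility and with Lemma \ref{Lem, lift an irreducible morphism} to pass between $\mathrm{ind}\Lambda$ and $\underline{\mathrm{ind}}\Lambda$. Your explicit remark that both endpoints of $\alpha$ must be nonprojective is a reasonable reading of the (implicit) hypothesis in the paper.
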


\begin{proof} To show that $U(\sigma(\alpha))$ is an irreducible morphism
in $\mathrm{ind}\Lambda$, by Lemma \ref{Lem, lift an irreducible morphism}, it suffices to show that $qU(\sigma(\alpha))$ is an irreducible morphism
in $\underline{\mathrm{ind}}\Lambda$. Since $U:k\Gamma_{\Lambda}/J\rightarrow \mathrm{ind}\Lambda$ maps each arrow of $\Gamma_{\Lambda}$ to an irreducible morphism,  $U(\al)$ is an irreducible morphism, and $qU(\al)$ is an irreducible morphism in $\underline{\mathrm{ind}}\Lambda$. Since $\phi'$ is an isomorphism and
 \begin{equation*} qU(\sigma(\al))=Vq(\sigma(\al))=V\phi'_{0}q(\al)=\phi'Vq(\al)=\phi'qU(\al),
\end{equation*} $qU(\sigma(\alpha))$ is an irreducible morphism
in $\underline{\mathrm{ind}}\Lambda$.

\end{proof}

It is clear that the similar result as Lemma \ref{Lem, preserve irreducible morphism} holds for any RFS algebra $A$.

\medskip
We now prove the main result of this section, which is a corrected form of \cite[Proposition 3.3]{Asashiba2003} in nonstandard case.

\begin{Prop}\label{Prop, lift}  Let $k$ be an algebraically closed field of characteristic $2$, $\Lambda$ be the representative algebra of nonstandard RFS algebras of type $(D_{3m},1/3,1)$, where $m\geq 2$. Let $\phi:\underline{\mathrm{mod}}\Lambda\rightarrow\underline{\mathrm{mod}}\Lambda$ be a stable equivalence such that $\phi (X)\cong X$ for any $X\in\underline{\mathrm{mod}}\Lambda$. Then $\phi$ lifts to a Morita equivalence.
\end{Prop}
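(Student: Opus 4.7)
The plan is to construct an additive endofunctor $\Phi$ of $k\Gamma_{\Lambda}/J$ that is the identity on objects and descends, via the isomorphism $V$ and the quotient $q$, to the automorphism $\phi'_{0}$ of $k\prescript{}{s}{\Gamma}_{\Lambda}/I$ induced by $\phi$. Once $\Phi$ is in hand, the transported functor $\Psi=U\Phi U^{-1}:\mathrm{ind}\Lambda\to \mathrm{ind}\Lambda$ extends $k$-linearly to $\mathrm{mod}\Lambda$; it preserves the radical, preserves indecomposability and irreducible morphisms, and reflects isomorphism classes, all immediate from $\Phi$ being an identity-on-objects automorphism of $k\Gamma_{\Lambda}/J$. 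Lemma~\ref{Lem, equivalence} then yields that $\Psi$ is an equivalence, hence a Morita self-equivalence of $\Lambda$, and by construction the bottom face of Figure~1 commutes, so $\Psi$ lifts $\phi$.

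Since $\phi(X)\cong X$ for every $X$, composing with a suitable natural isomorphism we may assume $\phi$ restricts to an automorphism of $\underline{\mathrm{ind}}\Lambda$ that is the identity on objects, producing $\phi'_{0}$ with $\phi'_{0}(x)=x$ for every vertex $x$ of $\prescript{}{s}{\Gamma}_{\Lambda}$. For each arrow $\alpha$ of $\prescript{}{s}{\Gamma}_{\Lambda}$, choose a preliminary lift $\sigma(\alpha)\in k\Gamma_{\Lambda}/J$ of $\phi'_{0}(\alpha)$; by Lemma~\ref{Lem, preserve irreducible morphism}, each $U(\sigma(\alpha))$ is irreducible. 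The arrows of $\Gamma_{\Lambda}$ through a projective vertex are of the form ${\rad}P_{i}\hookrightarrow P_{i}$ or $P_{i}\twoheadrightarrow P_{i}/{\soc}P_{i}$; their values under $\Phi$ will be pinned down by the surrounding meshes together with Lemma~\ref{Lem, irreducible morphism}.

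The content is compatibility of $\Phi$ with the generators of $J$. For a mesh of $\Gamma_{\Lambda}$ whose middle terms and sink are all nonprojective and whose sink is different from $(1,3m-1)$, the stable mesh relation is already preserved by $\phi'_{0}$, so Lemma~\ref{Lem, lift a mesh relation}(1) lets us adjust the $\sigma(\alpha_{i})$'s by morphisms factoring through a projective (hence of unchanged stable class) until the ordinary mesh relation holds exactly. For meshes with a projective middle term, the same lemma together with Lemma~\ref{Lem, irreducible morphism} both fixes the arrows through the projective and confirms the relation. These adjustments must be performed in an order guaranteeing that every arrow $\sigma(\alpha)$ is touched at most once; in particular, the meshes not lying on the distinguished path $p$ should be processed first, and the asymmetric version in Lemma~\ref{Lem, lift a mesh relations} is tailored to carry the argument along the $4m$ consecutive meshes crossed by $p$ without disturbing liftings already fixed.

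The main obstacle is the modified relation $\underline{m_{(0,3m-1)}}+\underline{p}\in I$. After the ordinary meshes have been processed, the liftings of every arrow along $p$ are completely determined, and what remains is to verify
$$\Phi(m_{(0,3m-1)})+\Phi(p)\in J.$$
Preservation of $\underline{m_{(0,3m-1)}}+\underline{p}$ by $\phi'_{0}$ only ensures that this element lies in $J$ modulo morphisms that factor through projectives; the delicate point is to propagate the small corrections of the previous paragraph through every one of the $4m$ meshes traversed by $p$ and confirm that the cumulative defect vanishes. This is where the characteristic-$2$ hypothesis and the precise shape of $p$ described in Section~2 become essential: they force the scalar coefficient of $p$ in $\Phi(p)$ to be exactly $1$ and make all other contributions absorb into $J$. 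Once this identity is established, $\Phi$ is well-defined on $k\Gamma_{\Lambda}/J$, and the proposition follows as indicated in the first paragraph.
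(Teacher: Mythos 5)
Your overall architecture matches the paper's: lift $\phi'_{0}$ arrow by arrow to a functor on $k\Gamma_{\Lambda}/J$, adjust the lifts mesh by mesh using Lemmas \ref{Lem, lift a mesh relation} and \ref{Lem, lift a mesh relations}, and conclude via Lemma \ref{Lem, equivalence}. However, your treatment of the modified mesh relation $m_{(0,3m-1)}+p$ is not an argument. You assert that after processing the ordinary meshes ``the liftings of every arrow along $p$ are completely determined'' and that one must then \emph{verify} $\Phi(m_{(0,3m-1)})+\Phi(p)\in J$, with the characteristic-$2$ hypothesis ``forcing the scalar coefficient of $p$ in $\Phi(p)$ to be exactly $1$.'' No such verification is needed, and characteristic $2$ plays no role in the lifting step (it only enters because nonstandard algebras exist only in characteristic $2$). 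The point is that $(0,3m-1)$ corresponds to ${\rad}P_{1}$, so the mesh $m_{(0,3m-1)}$ in the full AR-quiver $\Gamma_{\Lambda}$ contains the arrows $\iota_{1}:{\rad}P_{1}\to P_{1}$ and $\kappa_{1}:P_{1}\to P_{1}/{\soc}P_{1}$ through the projective vertex; these are left undefined until the end. Since $q\sigma=\phi'_{0}q$ forces $U(\sigma(\al^{0}_{3m-2}\be^{1}_{3m-2}+p))$ to factor through a projective, hence through the projective cover $P_{1}$ of $P_{1}/{\soc}P_{1}$, one simply \emph{defines} $\sigma(\iota_{1})$ and $\sigma(\kappa_{1})$ so as to cancel the defect. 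Whatever scalar or error term $\Phi(p)$ carries is absorbed by this free choice; there is nothing to compute and nothing for the characteristic to force.

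The second gap is that you assert, without exhibiting it, an ordering of the meshes in which every arrow is touched at most once. Because $\prescript{}{s}{\Gamma}_{\Lambda}\cong\mathbb{Z}D_{3m}/\langle\tau^{2m-1}\rangle$ is a finite cylinder, the inductive assignment must close up, and in the paper's scheme the arrow $\gamma^{0}:(1,3m-2)\to(1,3m)$ necessarily gets readjusted when fixing the mesh at $(1,3m-2)$, after the mesh at $(0,3m)$ (which also contains $\gamma^{0}$) has already been arranged. The resulting conflict is not resolved by ordering but by a module-theoretic vanishing statement specific to $\Lambda$: every morphism $U(0,3m)\to U(1,3m)$ that factors through a projective is zero (proved by identifying $U(0,3m)$ and $U(1,3m)$ as the end terms of the almost split sequence with middle term ${\rad}P_{1}/{\soc}P_{1}$ and computing with the common projective cover $P_{1}$). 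Your proposal contains no substitute for this step. Finally, your claim that preservation of irreducible morphisms by $\Psi$ is ``immediate from $\Phi$ being an identity-on-objects automorphism'' is circular — that $\Phi$ is an automorphism is the conclusion of Lemma \ref{Lem, equivalence}, not an input; for the arrows ${\rad}P\to P$ and $P\to P/{\soc}P$ one must run the almost-split-sequence argument and invoke Lemma \ref{Lem, irreducible morphism} explicitly, as the paper does in its Step 2.
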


{\small\xymatrix@R=1.7pc@C=0.6pc@W=0mm{
 & & & & & & & & & & & & & & & & & \star\ar[dr] & & \bullet\ar[dr]|{\quad\be_{3m-2}^{0}} & & \bullet & & & & \\
 & & & & & & & & & & & & & & & & \bullet\ar[dr]\ar[ur]\ar[r] & \bullet\ar[r]^{\delta^{1}} & \bullet\ar[dr]|{\quad\be_{3m-3}^{0}}\ar[ur]|{\al_{3m-2}^{0}\quad}\ar[r]^{\gamma^{0}} & \bullet\ar[r]^{\delta^{0}} & \bullet\ar[ur] &  & & & & \\
 & & & & & & & & & & & & & & & & & \bullet\ar[dr]|{\be_{3m-4}^{0}}\ar[ur]|{\al_{3m-3}^{0}\quad}\ar@{{}*{\cdot}{}}[ddll] & & \bullet\ar[ur] & & & & & & \\
 & & & & & & & & & & & & & & & & & & \bullet\ar[ur]\ar@{{}*{\cdot}{}}[ddll] & & &  & & & & \\
 & & & & & & & & & & & & & & & \bullet\ar[dr]^{\be_{2m}^{0}} & & & & & &  & & & & \\
 & & & & & & & & & & & & & & \bullet\ar[dr]|{\be_{2m-1}^{0}}\ar[ur]^{\al_{2m}^{0}} & & \bullet & & & & &  & & & & \\
 & & & & & & & & & & & & & \bullet\ar[dr]_{\al_{2m-2}^{0}}\ar[ur]^{\al_{2m-1}^{0}} & & \bullet\ar[dr]\ar[ur] & & & & & &  & & & & \\
 & & & & & & & & & & & & & & \bullet\ar[dr]_{\al_{2m-3}^{0}}\ar[ur]|{\be_{2m-2}^{0}} & & \bullet\ar@{{}*{\cdot}{}}[ddrr] & & & & &  & & & & \\
 & & & & & & & & & & & & & & & \bullet\ar[ur]|{\be_{2m-3}^{0}}\ar@{{}*{\cdot}{}}[ddrr] & & & & & &  & & & & \\
 & & & & & & & & & & & & & & & & & & \bullet\ar[dr] & & &  & & & & \\
 & & & & & & & & & & & & & & & & & \bullet\ar[dr]_{\al_{m+1}^{0}}\ar[ur]|{\be_{m+2}^{0}} & & \bullet\ar[dr] & & & & & & \\
 & & & & & & & & & & & & & & & & & & \bullet\ar[dr]|{\be_{m}^{0}}\ar[ur]|{\be_{m+1}^{0}} & & \bullet & & & & & \\
 & & & & & & & & & & & & & & & & & \bullet\ar[dr]_{\al_{m-1}^{0}}\ar[ur]^{\al_{m}^{0}} & & \bullet\ar[dr]\ar[ur] & & & & & & \\
 & & & & & & & & & & & & & & & & & & \bullet\ar[dr]_{\al_{m-2}^{0}}\ar[ur]|{\be_{m-1}^{0}} & & \bullet\ar@{{}*{\cdot}{}}[dr] & & & & & \\
 & & & & & & & & & & & & & & & & & & & \bullet\ar[ur]|{\be_{m-2}^{0}}\ar@{{}*{\cdot}{}}[dr] & &\bullet\ar[dr] & & & & \\
 & & & & & & & & & & & & & & & & & & & &\bullet\ar[ur]|{\be_{2}^{0}}\ar[dr]_{\al_{1}^{0}} & & \bullet\ar[dr] & & & \\
 &\bullet\ar@{{}*{\cdot}{}}"2,17" & &\bullet\ar@{{}*{\cdot}{}}"7,14" & &\bullet\ar@{{}*{\cdot}{}}"8,15" & & & & & & & & & &\star\ar@{{}*{\cdot}{}}"14,19" & & &\cdots & & & \star\ar[ur]|{\be_{1}^{0}} & & \bullet & & \\
  &0 & &1 &  &2\ar@{{}*{\cdot}{}}"18,16" & & & & & & & & &  &m\ar@{{}*{\cdot}{}}"18,22" & & & & & &2m-2 & &2m-1 &  & \\
 }}
$$\mbox{Figure }2\mbox{ (part of }\prescript{}{s}{\Gamma}_{\Lambda}\mbox{ where }\star \mbox{ denotes the modules in }\mathcal{C},\mbox{ and }0\mbox{ and }2m-1\mbox{ are identified})$$

\begin{proof} Since $\phi:\underline{\mathrm{mod}}\Lambda\rightarrow\underline{\mathrm{mod}}\Lambda$ is a stable equivalence, it induces an equivalence $\phi':\underline{\mathrm{ind}}\Lambda\rightarrow\underline{\mathrm{ind}}\Lambda$. Then we have the diagram in Figure 1. Note that since the categories $k\Gamma_{\Lambda}/J$, $\mathrm{ind}\Lambda,k\prescript{}{s}{\Gamma}_{\Lambda}/I$, and $\underline{\mathrm{ind}}\Lambda$ are basic, equivalences between them automatically become isomorphisms. By construction, the left, the right and the bottom faces of the diagram in Figure 1 are (strict) commutative.

We will divide our proof into two steps. In Step 1, we define a functor $\psi'_{0}:k\Gamma_{\Lambda}/J\rightarrow k\Gamma_{\Lambda}/J$ such that the back face of the diagram in Figure 1 is commutative; the functor $\psi'_{0}$ then induces a functor $\psi':\mathrm{ind}\Lambda\rightarrow \mathrm{ind}\Lambda$ such that the top and the front faces of the diagram in Figure 1 are also commutative. In Step 2, we show that $\psi':\mathrm{ind}\Lambda\rightarrow \mathrm{ind}\Lambda$ is an isomorphism.

\medskip
{\it Step 1: To define a functor $\psi'_{0}: k\Gamma_{\Lambda}/J\rightarrow k\Gamma_{\Lambda}/J$ which lifts the functor $\phi'_{0}:k\prescript{}{s}{\Gamma}_{\Lambda}/I\rightarrow k\prescript{}{s}{\Gamma}_{\Lambda}/I$.}

We begin to define $\psi'_{0}$ on objects as the identity map.
For each arrow $\alpha:x \rightarrow y$ of $\Gamma_{\Lambda}$ (by abuse of notation, we also denote by $\alpha$ the corresponding morphism in $k\Gamma_{\Lambda}$ or in $k\Gamma_{\Lambda}/J$), we need to choose a morphism $\sigma(\alpha):x\rightarrow y$ of $k{\Gamma}_{\Lambda}/J$ such that $q\sigma(\alpha)=\phi'_{0}q(\alpha)$ in $k\prescript{}{s}{\Gamma}_{\Lambda}/I$ and all $\sigma(\alpha)$ are compatible with the modified mesh relations in $k\Gamma_{\Lambda}/J$. Once this is done, we get the desired functor $\psi'_{0}: k\Gamma_{\Lambda}/J\rightarrow k\Gamma_{\Lambda}/J$. We will divide Step 1 into four substeps.

{\it Step 1.1: To choose a section $D_{3m}'$ in $\prescript{}{s}{\Gamma}_{\Lambda} \cong \mathbb{Z}D_{3m}/\langle \tau^{2m-1}\rangle$ (note that $\prescript{}{s}{\Gamma}_{\Lambda} \cong \mathbb{Z}D_{3m}'/\langle \tau^{2m-1}\rangle$ since the underlying graphs of $D_{3m}'$ and $D_{3m}$ are isomorphic) and to define inductively (to the left direction) the values of $\sigma$ on all the arrows in $\prescript{}{s}{\Gamma}_{\Lambda}$ except for the arrows $\be^{0}_{i}$ $(1\leq i \leq 3m-2)$ and $\delta^{0}$ in Figure 2, such that $\sigma(m_x)=0$  in $k\Gamma_{\Lambda}/J$ for all vertices $x$ in $\prescript{}{s}{\Gamma}_{\Lambda}$ and not in $\mathcal{C}$ once the values of $\sigma$ on all arrows which belong to $m_{x}$ have been defined.}

Recall that we fix $\mathcal{C}=\{{\rad}P_i\mid i=1,2,\cdots,m\}$ to the position $\{(0,3m-1)$, $(2m-2,1)$, $\dots$, $(m,1)\}$ in the stable AR-quiver $\prescript{}{s}{\Gamma}_{\Lambda}$ (cf. Section 2). The arrows in the section $D_{3m}'$ are marked by $\al_{1}^{0}, \cdots, \al_{3m-2}^{0},\gamma^{0}$ from the bottom to the top (see Figure 2). Note that the irreducible morphisms corresponding to $\alpha^{0}_{i}$ are monomorphisms for $1\leq i\leq m-2$ or $2m-1\leq i\leq 3m-2$ and are epimorphisms for $m-1 \leq i\leq 2m-2$, and that the irreducible morphism corresponding to $\gamma^{0}$ is an epimorphism. We just give one example to show that $\alpha^{0}_{2m-1}$ corresponds to an irreducible monomorphism. Since there exists an almost split sequence $0\rightarrow U(2m-1,1)\rightarrow U(2m-1,2)\rightarrow U(2m,1)\rightarrow 0$, $l_{\Lambda}(U(2m-1,1))<l_{\Lambda}(U(2m-1,2))$. Inductively, since there exists almost split sequences $0\rightarrow U(2m-1-i,1+i)\rightarrow U(2m-1-i,2+i)\oplus U(2m-i,i)\rightarrow U(2m-i,1+i)\rightarrow 0$ for each $1\leq i\leq 2m-2$, we have that $l_{\Lambda}(U(1,2m-1))<l_{\Lambda}(U(1,2m))$. Therefore $\alpha^{0}_{2m-1}$ is a monomorphism. For example, in the case $m=4$, the section $D_{12}'$ is marked by the dotted arrows in $\prescript{}{s}{\Gamma}_{\Lambda}$ as follows.

$$
\vcenter{
	\small\xymatrix@R=1.2pc@C=0.6pc {
	&&&&&&&&&&\star\ar[dr]&&\bullet\ar[dr]&&\bullet\ar[dr]&&\bullet\ar[dr]\ar[dr]&&\bullet\ar[dr]&&\bullet\ar[dr]&&\bullet\ar[dr]&&\bullet \\
    &&&&&&&&&\bullet\ar[ur]\ar[dr]\ar[r]&\bullet\ar[r]&\bullet\ar@{^{(}-->}[ur]^{\alpha^{0}_{10}}\ar[dr]\ar@{-->>}[r]^{\gamma^{0}}&\bullet\ar[r]&\bullet\ar[ur]\ar[dr]\ar[r]&\bullet\ar[r]
    &\bullet\ar[ur]\ar[dr]\ar[r]
    &\bullet\ar[r]&\bullet\ar[ur]\ar[dr]\ar[r]&\bullet\ar[r]&\bullet\ar[ur]\ar[dr]\ar[r]&\bullet\ar[r]&\bullet\ar[ur]\ar[dr]\ar[r]&\bullet\ar[r]&\bullet\ar[ur]& \\	
    &&&&&&&&\bullet\ar[ur]\ar[dr]&&\bullet\ar@{^{(}-->}[ur]^{\alpha^{0}_{9}}\ar[dr]&&\bullet\ar[ur]\ar[dr]&&\bullet\ar[ur]\ar[dr]&&\bullet\ar[ur]\ar[dr]&&\bullet\ar[ur]\ar[dr]&&\bullet\ar[ur]\ar[dr]&&\bullet\ar[ur]&& \\
    &&&&&&&\bullet\ar[ur]\ar[dr]&&\bullet\ar@{^{(}-->}[ur]^{\alpha^{0}_{8}}\ar[dr]&&\bullet\ar[ur]\ar[dr]&&\bullet\ar[ur]\ar[dr]&&\bullet\ar[ur]\ar[dr]&&\bullet\ar[ur]\ar[dr]&&\bullet\ar[ur]\ar[dr]&&\bullet\ar[ur]&&& \\
    &&&&&&\bullet\ar[ur]\ar[dr]&&\bullet\ar@{^{(}-->}[ur]^{\alpha^{0}_{7}}\ar@{-->>}[dr]^{\alpha^{0}_{6}}&&\bullet\ar[ur]\ar[dr]&&\bullet\ar[ur]\ar[dr]&&\bullet\ar[ur]\ar[dr]&&\bullet\ar[ur]\ar[dr]&&\bullet\ar[ur]\ar[dr]&&\bullet\ar[ur]&&&& \\
    &&&&&\bullet\ar[ur]\ar[dr]&&\bullet\ar[ur]\ar[dr]&&\bullet\ar[ur]\ar@{-->>}[dr]^{\alpha^{0}_{5}}&&\bullet\ar[ur]\ar[dr]&&\bullet\ar[ur]\ar[dr]&&\bullet\ar[ur]\ar[dr]&&\bullet\ar[ur]\ar[dr]&&\bullet\ar[ur]&&&&& \\
    &&&&\bullet\ar[ur]\ar[dr]&&\bullet\ar[ur]\ar[dr]&&\bullet\ar[ur]\ar[dr]&&\bullet\ar[ur]\ar[dr]&&\bullet\ar[ur]\ar[dr]&&\bullet\ar[ur]\ar[dr]&&\bullet\ar[ur]\ar[dr]&&\bullet\ar[ur]&&&&&& \\
    &&&\bullet\ar[ur]\ar[dr]&&\bullet\ar[ur]\ar[dr]&&\bullet\ar[ur]\ar[dr]&&\bullet\ar@{-->>}[ur]^{\alpha^{0}_{4}}\ar@{-->>}[dr]^{\alpha^{0}_{3}}&&\bullet\ar[ur]\ar[dr]&&\bullet\ar[ur]\ar[dr]&&\bullet\ar[ur]\ar[dr]&&\bullet\ar[ur]&&&&&&& \\
    &&\bullet\ar[ur]\ar[dr]&&\bullet\ar[ur]\ar[dr]&&\bullet\ar[ur]\ar[dr]&&\bullet\ar[ur]\ar[dr]&&\bullet\ar[ur]\ar@{^{(}-->}[dr]^{\alpha^{0}_{2}}&&\bullet\ar[ur]\ar[dr]&&\bullet\ar[ur]\ar[dr]&&\bullet\ar[ur]&&&&&&&& \\
    &\bullet\ar[ur]\ar[dr]&&\bullet\ar[ur]\ar[dr]&&\bullet\ar[ur]\ar[dr]&&\bullet\ar[ur]\ar[dr]&&\bullet\ar[ur]\ar[dr]&&\bullet\ar[ur]\ar@{^{(}-->}[dr]^{\alpha^{0}_{1}}&&\bullet\ar[ur]\ar[dr]&&\bullet\ar[ur]&&&&&&&&& \\
    \bullet\ar[ur]&&\bullet\ar[ur]&&\bullet\ar[ur]&&\bullet\ar[ur]&&\star\ar[ur]&&\star\ar[ur]&&\star\ar[ur]&&\bullet\ar[ur]&&&&&&&&&& \\
    0&&1&&2&&3&&4&&5&&6&&7&&&&&&&&
}}$$
$$\mbox{Figure 3 (where }\star\mbox{ denotes the modules in }\mathcal{C},\mbox{ and 0 and 7 are identified})$$

\medskip
For each $1\leq i\leq 3m-2$, suppose $\al_{i}^{0}$ be the arrow from $x$  to $y$, define $\be_{i}^{0}$ be the arrow from $y$ to $\tau^{-1}x$ in $\Gamma_{\Lambda}$. Let $\al_{i}^{r}=\tau^r\al_{i}^{0}$, $\be_{i}^{r}=\tau^r\be_{i}^{0}$, $\gamma^{r}:(1-r,3m-2)\rightarrow(1-r,3m)$, $\delta^{r}:(1-r,3m)\rightarrow(2-r,3m-2)$.

For each arrow $\al^{0}_{i}$ ($1\leq i\leq 3m-2$) on the section $D_{3m}'$, choose $\sigma(\al^{0}_{i})$ to be a morphism of $k\Gamma_{\Lambda}/J$ such that $q(\sigma(\al^{0}_{i}))=\phi'_{0}(q(\al^{0}_{i}))$, choose $\sigma(\gamma^{0})$ to be a morphism of $k\Gamma_{\Lambda}/J$ such that $q(\sigma(\gamma^{0}))=\phi'_{0}(q(\gamma^{0}))$, and choose a morphism  $\sigma(\be^{1}_{1})$ of $k\Gamma_{\Lambda}/J$ such that $q(\sigma(\be^{1}_{1}))=\phi'_{0}(q(\be^{1}_{1}))$. Next, choose temporarily morphisms $\sigma'(\be^{1}_{2})$, $\sigma'(\al^{1}_{1})$ of $k\Gamma_{\Lambda}/J$ such that $q(\sigma'(\be^{1}_{2}))=\phi'_{0}(q(\be^{1}_{2}))$, $q(\sigma'(\al^{1}_{1}))=\phi'_{0}(q(\al^{1}_{1}))$ (such morphisms $\sigma'(-)$ may not be compatible with the modified mesh
relations but we will adjust them to get our promised morphisms). By Lemma \ref{Lem, preserve irreducible morphism}, $U(\sigma(\al^{0}_{2}))$, $U(\sigma'(\be^{1}_{2}))$, $U(\sigma(\be^{1}_{1}))$, $U(\sigma'(\al^{1}_{1}))$ are irreducible in $\mathrm{ind}\Lambda$. On the other hand, we have
\begin{equation*}
qU(\sigma(\al^{0}_{2})\sigma'(\be^{1}_{2})+\sigma(\be^{1}_{1})\sigma'(\al^{1}_{1}))=Vq(\sigma(\al^{0}_{2})
\sigma'(\be^{1}_{2})+\sigma(\be^{1}_{1})\sigma'(\al^{1}_{1}))=V\phi'_{0}q(\al^{0}_{2}\be^{1}_{2}+\be^{1}_{1}\al^{1}_{1})=0
\end{equation*}
in $\underline{\mathrm{ind}}\Lambda$, since $\al^{0}_{2}\be^{1}_{2}+\be^{1}_{1}\al^{1}_{1}$ lies in the modified mesh ideal $J$.
 By Lemma \ref{Lem, lift a mesh relation}(1), there exist morphisms $f$, $g$ in $\mathrm{mod}\Lambda$ such that $qU(\sigma'(\be^{1}_{2}))=q(f)$ and $qU(\sigma'(\al^{1}_{1}))=q(g)$ in $\underline{\mathrm{mod}}\Lambda$ and $U(\sigma(\al^{0}_{2}))f+U(\sigma(\be^{1}_{1}))g=0$ in $\mathrm{mod}\Lambda$.
Define $\sigma(\be^{1}_{2}):=U^{-1}(f)$ and $\sigma(\al^{1}_{1}):=U^{-1}(g)$ (note that $U$ is an isomorphism). Now we have $q(\sigma(\al^{1}_{1}))=\phi'_{0}(q(\al^{1}_{1}))$ and $q(\sigma(\be^{1}_{2}))=\phi'_{0}(q(\be^{1}_{2}))$ in $k\prescript{}{s}{\Gamma}_{\Lambda}/I$, and $\sigma(\al^{0}_{2})\sigma(\be^{1}_{2})+\sigma(\be^{1}_{1})\sigma(\al^{1}_{1})=0$ in $k{\Gamma}_{\Lambda}/J$. Similarly one can define $\sigma(\al^{1}_{i})$, $\sigma(\be^{1}_{i})$ for all $1\leq i\leq 3m-2$ and $\sigma(\gamma^1)$, $\sigma(\delta^1)$ such that $q\sigma=\phi'_{0}q$ for all these arrows, and that $\sigma(m_x)=0$ in $k\Gamma_{\Lambda}/J$ for the vertices $x$ in $\prescript{}{s}{\Gamma}_{\Lambda}\setminus\mathcal{C}$ once the values of $\sigma$ on all arrows that belong to $m_{x}$ have been defined.

By induction, one can define the values of $\sigma$ on $\al^{r}_{i}$, $\be^{t}_{i}$, $\gamma^r$, $\delta^t$ for all $1\leq i\leq 3m-2$, $0\leq r\leq 2m-2$, $1\leq t\leq 2m-2$ such that $q\sigma=\phi'_{0}q$ on all these arrows, and that $\sigma(m_x)=0$ in $k\Gamma_{\Lambda}/J$ for all vertices $x$ in $\prescript{}{s}{\Gamma}_{\Lambda}$ and not in $\mathcal{C}$ once the values of $\sigma$ on all arrows which belong to $m_{x}$ have been defined.

\medskip
{\it Step 1.2: To define the values of $\sigma$ on arrows $\be^{0}_{i}$ $(1\leq i \leq 3m-2)$ and $\delta^{0}$ in Figure 2 such that $\sigma(m_x)=0$ in $k\Gamma_{\Lambda}/J$ for all vertices $x$ in $\prescript{}{s}{\Gamma}_{\Lambda}$ except for the vertex $(1,3m-2)$ and the vertices in $\mathcal{C}$}.

We start from the middle ones ($\be^{0}_{m}$ and $\be^{0}_{m-1}$) to define the values of $\sigma$ on arrows $\be^{0}_{i}$ $(1\leq i \leq 3m-2)$. By Lemma \ref{Lem, lift a mesh relation}(2), one can choose morphisms $\sigma(\be^{0}_{m})$ and $\sigma(\be^{0}_{m-1})$ such that $q(\sigma(\be^{0}_{i}))=\phi'_{0}(q(\be^{0}_{i}))$ for $i=m-1,m$ and $\sigma(\be^{0}_{m-1})\sigma(\al^{0}_{m-1})+\sigma(\be^{0}_{m})\sigma(\al^{0}_{m})=0$. Note that since the irreducible morphisms corresponding to $\alpha^{0}_{i}$ are monomorphisms for $1\leq i\leq m-2$ or $2m-1\leq i\leq 3m-3$, and the irreducible morphisms corresponding to $\alpha^{2m-2}_{i}=\tau^{-1}\al_{i}^{0}$ are epimorphisms for $m+1\leq i\leq 2m-2$, using Lemma \ref{Lem, lift a mesh relations}, the values of $\sigma$ on arrows $\beta^{0}_{m-2},\cdots,\beta^{0}_{1}$ and arrows $\beta^{0}_{m+1},\cdots,\beta^{0}_{3m-3}$ can be defined inductively such that $q\sigma=\phi'_{0}q$ on all these arrows, and that $\sigma(m_{x})=0$ in $k\Gamma_{\Lambda}/J$ for all vertices $x$ once the values of $\sigma$ on all arrows which belong to $m_{x}$ have been defined. Finally, using Lemma \ref{Lem, lift a mesh relation}(1), the value of $\sigma$ on $\be^{0}_{3m-2}$, $\delta^{0}$ can be defined, which satisfy $q\sigma=\phi'_{0}q$ on arrows $\be^{0}_{3m-2}$, $\delta^{0}$, and $\sigma(m_{x})=0$ in $k\Gamma_{\Lambda}/J$ for $x=(1,3m-1)$ or $(1,3m)$. Therefore the values of $\sigma$ on all arrows of  $\prescript{}{s}{\Gamma}_{\Lambda}$ have been defined, which satisfy $q\sigma=\phi'_{0}q$ on all arrows of $\prescript{}{s}{\Gamma}_{\Lambda}$ and $\sigma(m_x)=0$ in $k\Gamma_{\Lambda}/J$ for all vertices $x$ in $\prescript{}{s}{\Gamma}_{\Lambda}$ except the vertex $(1,3m-2)$ and the vertices which correspond to the radical of some indecomposable projective module.

\medskip
{\it Step 1.3: To adjust the values of $\sigma$ on arrows $\al^{0}_{3m-2}$ and $\gamma^0$ in Figure 2 such that $\sigma(m_x)=0$ in $k\Gamma_{\Lambda}/J$ for all vertices $x$ in $\prescript{}{s}{\Gamma}_{\Lambda}$ and not in $\mathcal{C}$}.

Since there exists an exact sequence \begin{equation*} 0\rightarrow U(1,1)\rightarrow U(1,2)\rightarrow U(2,1)\rightarrow 0, \end{equation*}  $l_{\Lambda}(U(1,2))>l_{\Lambda}(U(2,1))$. Since there exists exact sequences \begin{equation*} 0\rightarrow U(1,i)\rightarrow U(1,i+1)\oplus U(2,i-1)\rightarrow U(2,i)\rightarrow 0  \end{equation*} for $2\leq i\leq 3m-3$, by induction $l_{\Lambda}(U(1,3m-2))>l_{\Lambda}(U(2,3m-3))$. Since there exists an exact sequence \begin{equation*}
 0\rightarrow U(1,3m-2)\rightarrow U(1,3m-1)\oplus U(1,3m)\oplus U(2,3m-3)\rightarrow U(2,3m-2)\rightarrow 0,
 \end{equation*} $l_{\Lambda}(U(1,3m-1))+l_{\Lambda}(U(1,3m))>l_{\Lambda}(U(2,3m-2))$. By Lemma \ref{Lem, lift a mesh relations}(1), the values of $\sigma$ on $\al^{0}_{3m-2}$ and $\gamma^0$ can be changed such that it satisfies $q(\sigma(\al^{0}_{3m-2}))=\phi'_{0}(q(\al^{0}_{3m-2}))$, $q(\sigma(\gamma^0))=\phi'_{0}(q(\gamma^0))$ and $\sigma(\be^{0}_{3m-2})\sigma(\al^{0}_{3m-2})+\sigma(\delta^0)\sigma(\gamma^0)+\sigma(\al^{2m-2}_{3m-3})\sigma(\be^{0}_{3m-3})=0$. As a result, we get $\sigma(m_x)=0$ for the vertex $x=(1,3m-2)$. Note that the above adjustment changes the value of $\sigma$ on $\gamma^0$ and we still need to show that $\sigma(m_x)=0$ for the vertex $x=(0,3m)$.

Indeed, we will show that there is no nonzero morphism from $U(0,3m)$ to $U(1,3m)$ which factors though a projective module, which implies $U(\sigma(\gamma^0))U(\sigma(\delta^{1}))=0$.

Note that $U(1,3m-2)={\rad}P_{1}/{\soc}P_{1}=$ \xymatrix@R=0.5pc@C=0.8pc {
	1 \ar@{-}[d]\ar@{-}[dddrr]& & 2\ar@{-}[d] \\
	2 \ar@{.}[d] &  & 3 \ar@{.}[d] \\
	m-1\ar@{-}[d] & & m\ar@{-}[d] \\
	m & & 1 \\
	 }  \\
 Let $X=$ \xymatrix@R=0.5pc@C=0.8pc {
 	 2\ar@{-}[d] \\
  3 \ar@{.}[d] \\
 	 m\ar@{-}[d] \\
  1 \\
 },
   $Y=$ \xymatrix@R=0.5pc@C=0.8pc {
	1 \ar@{-}[d] \\
	2 \ar@{.}[d]  \\
	m-1\ar@{-}[d]  \\
	m   \\
} .
There is a nonsplit exact sequence $0\rightarrow X\rightarrow {\rad}P_{1}/{\soc}P_{1}\rightarrow Y\rightarrow 0$. Since $\Lambda$ is symmetric, $DTr=\Omega^2$ and $\Omega^2(Y)=\Omega(Z)=X$, where $Z=$ \xymatrix@R=0.5pc@C=0.8pc {
	1 \ar@{-}[d]\ar@{-}[dddrr]& & \\
	2 \ar@{.}[d] &  &  \\
	m-1\ar@{-}[d] & &  \\
	m \ar@{-}[dr]& & 1\ar@{-}[dl] \\
	& 1 & } \\
Since each nonzero endomorphism of $Y$ is an isomorphism, each nonisomorphism $h:Y\rightarrow Y$ is zero, hence $h$ factors through ${\rad}P_{1}/{\soc}P_{1}\rightarrow Y$. By \cite[Chapter V, Proposition 2.2]{ARS}, $0\rightarrow X\rightarrow {\rad}P_{1}/{\soc}P_{1}\rightarrow Y\rightarrow 0$ is an almost split sequence.  Then $U(0,3m)\cong X$ and $U(1,3m)\cong Y$. If a morphism $f:X\rightarrow Y$ factors through the projective cover $P_{1}$ of $Y$, let $f=vu$, $u:X\rightarrow P_{1}$, $v:P_{1}\rightarrow Y$, we have $u=\lambda \iota$, $v=\mu\pi$, where $\lambda,\mu\in k$, $\iota:X\rightarrow P_{1}$ be the inclusion and $\pi:P_{1}\rightarrow Y$ be the projection. Since $\pi\iota=0$, $f=0$. It implies that each morphism form $X$ to $Y$ which factors through a projective module is equal to zero. Since $q(\sigma(\delta^{1}))=\phi'_{0}(q(\delta^{1}))$ and $q(\sigma(\gamma^0))=\phi'_{0}(q(\gamma^0))$, $q(\sigma(\gamma^0))q(\sigma(\delta^{1}))=0$, $U(\sigma(\gamma^0))U(\sigma(\delta^{1})):X\rightarrow Y$ factors through a projective module. Hence $U(\sigma(\gamma^0))U(\sigma(\delta^{1}))=0$ and $\sigma(\gamma^0)\sigma(\delta^{1})=0$.

\medskip
{\it Step 1.4: To define the values of $\sigma$ on the remaining arrows $\iota_j$ (corresponding to ${\rad}P_{j}\rightarrow P_j$) and $\kappa_j$ (corresponding to $P_{j}\rightarrow P_j/{\soc}P_j$) for $1\leq j \leq m$ such that $\sigma(m_x)=0$ in $k\Gamma_{\Lambda}/J$ for all vertices $x$ in $\prescript{}{s}{\Gamma}_{\Lambda}$.}

Since $q(\sigma(\al^{0}_{3m-2}\be^{1}_{3m-2}+p))=\phi'_{0}(q(\al^{0}_{3m-2}\be^{1}_{3m-2}+p))=0,$ we have  $$qU(\sigma(\al^{0}_{3m-2}\be^{1}_{3m-2}+p))=Vq(\sigma(\al^{0}_{3m-2}\be^{1}_{3m-2}+p))=0$$
 and $U(\sigma(\al^{0}_{3m-2}\be^{1}_{3m-2}+p)):{\rad}P_1\rightarrow P_1/{\soc}P_1$ factors through a projective module. Since $P_1$ is the projective cover of $P_1/{\soc}P_1$,  $U(\sigma(\al^{0}_{3m-2}\be^{1}_{3m-2}+p)):{\rad}P_1\rightarrow P_1/{\soc}P_1$ factors through $P_1$. Let $U(\sigma(\al^{0}_{3m-2}\be^{1}_{3m-2}+p))+vu=0$, where $u:{\rad}P_1\rightarrow P_1$, $v:P_1\rightarrow P_1/{\soc}P_1$. Define $\sigma(\iota_1)$, $\sigma(\kappa_1)$ be morphisms in $k\Gamma_{\Lambda}/J$ such that $U(\sigma(\iota_1))=u$, $U(\sigma(\kappa_1))=v$. Then $U(\sigma(m_{(0,3m-1)}+p))=0$ and $\sigma(m_{(0,3m-1)}+p)=0$. Similarly one can define $\sigma(\iota_i)$, $\sigma(\kappa_i)$ for all $2\leq i\leq m$ such that $\sigma(m_x)=0$ in $k\Gamma_{\Lambda}/J$ for each vertex $x$ which corresponds to the radical of some $P_{i}$ for $2\leq i\leq m$.

As a summary, since $q\sigma=\phi'_{0}q$ on all arrows of $\Gamma_{\Lambda}$ and $\sigma(m_x)=0$ in $k\Gamma_{\Lambda}/J$ for all vertex $x$ in $\prescript{}{s}{\Gamma}_{\Lambda}$, we have lifted the functor $\phi'_{0}:k\prescript{}{s}{\Gamma}_{\Lambda}/I\rightarrow k\prescript{}{s}{\Gamma}_{\Lambda}/I$ to a functor $\psi'_{0}:k\Gamma_{\Lambda}/J\rightarrow k\Gamma_{\Lambda}/J$ such that $\psi'_{0}(x)=x$ for each vertex $x$ in $k\Gamma_{\Lambda}$ and $\psi'_{0}(\al)=\sigma(\al)$ for each arrow $\al$ of $\Gamma_{\Lambda}$, which again induces a functor $\psi':\mathrm{ind}\Lambda\rightarrow \mathrm{ind}\Lambda$ making all the faces of the diagram

$$\xymatrix@R=1.3pc{
	k\Gamma_{\Lambda}/J \ar[rd]^{U}_{\sim} \ar[dd]_{q}\ar[rr]^{\psi'_{0}}& & k\Gamma_{\Lambda}/J\ar[rd]^{U}_{\sim}\ar@{.>}[dd]_(0.3){q} & \\
	& \mathrm{ind}\Lambda\ar[dd]_(0.3){q}\ar[rr]_(0.7){\psi'} & & \mathrm{ind}\Lambda \ar[dd]_{q}\\
	k\prescript{}{s}{\Gamma}_{\Lambda}/I \ar[rd]^{V}_{\sim}\ar@{.>}[rr]_(0.7){\phi'_{0}} & & k\prescript{}{s}{\Gamma}_{\Lambda}/I \ar@{.>}[rd]^{V}_{\sim}& \\
	& \underline{\mathrm{ind}}\Lambda\ar[rr]_{\phi'} & & \underline{\mathrm{ind}}\Lambda \\
}$$
$$\mbox{Figure }4$$
commutative.

\medskip
{\it Step 2: To show that $\psi':\mathrm{ind}\Lambda\rightarrow \mathrm{ind}\Lambda$ is an isomorphism.}

Let $\psi: \mathrm{mod}\Lambda\rightarrow \mathrm{mod}\Lambda$ be a functor induced by $\psi':\mathrm{ind}\Lambda\rightarrow \mathrm{ind}\Lambda$. To show that $\psi'$ is an isomorphism, it suffices to show that $\psi$ is an equivalence. Since $\psi(X)\cong X$ for all $\Lambda$-module $X$, $\psi$ satisfies condition (3) and the former half of the condition (2) of Lemma \ref{Lem, equivalence}. By Lemma \ref{Lem, equivalence}, it suffices to show that $\psi({\rad}_{\Lambda})\subseteq {\rad}_{\Lambda}$ and $\psi$ preserves irreducible morphisms between indecomposable $\Lambda$-modules. Since the radical of $k\Gamma_{\Lambda}/J$ is generated by arrows of $\Gamma_{\Lambda}$ and $\psi'_{0}(x)=x$ for each vertex $x$ of $\Gamma_{\Lambda}$, $\psi'_{0}$ sends the morphisms in the radical of $k\Gamma_{\Lambda}/J$ to the morphisms in the radical of $k\Gamma_{\Lambda}/J$.  Hence $\psi({\rad}_{\Lambda})\subseteq {\rad}_{\Lambda}$.

Next we show that $\psi$ preserves irreducible morphisms between indecomposable $\Lambda$-modules. Since the diagram

$$\xymatrix{
\mathrm{mod}\Lambda\ar[r]^{\psi}\ar[d]_{q} & \mathrm{mod}\Lambda\ar[d]^{q} \\
\underline{\mathrm{mod}}\Lambda\ar[r]^{\phi} & \underline{\mathrm{mod}}\Lambda \\
}$$
commutes up to natural isomorphisms and $\phi$ is an equivalence, $\psi$ maps irreducible morphism between indecomposable nonprojective modules to irreducible morphism between indecomposable nonprojective modules (cf. Lemma \ref{Lem, lift an irreducible morphism}). Then it suffices to show that $\psi$ preserves irreducible morphisms ${\rad}P\rightarrow P$ and $P\rightarrow P/{\soc}P$, where $P$ is an indecomposable projective module. Let $P$ be an indecomposable projective module and \begin{equation*} 0\rightarrow {\rad}P\xrightarrow[]{(f_1,f_2)^T} P\oplus {\rad}P/{\soc}P\xrightarrow[]{(g_1,g_2)} P/{\soc}P\rightarrow 0 \end{equation*} be an almost split sequence. Since $\psi$ preserves irreducible morphisms between indecomposable nonprojective modules, $\psi(f_2)$, $\psi(g_2)$ are irreducible. Then $\psi(f_2)$ is an epimorphism and $\psi(g_2)$ is a monomorphism. Since ${\rad}P/{\soc}P$ is the unique maximal submodule of $P/{\soc}P$, $im(\psi(g_2))\subseteq {\rad}P/{\soc}P$. Since $l_{\Lambda}(im(\psi(g_2)))=l_{\Lambda}({\rad}P/{\soc}P)$, $im(\psi(g_2))= {\rad}P/{\soc}P$. Since $\psi(f_2)$ is an epimorphism, $im(\psi(g_{2}f_{2}))={\rad}P/{\soc}P$. Since $\psi(g_{1}f_{1})=-\psi(g_{2}f_{2})$, we have $im(\psi(g_{1})\psi(f_{1}))= {\rad}P/{\soc}P$. By Lemma \ref{Lem, irreducible morphism}, $\psi(f_{1})$, $\psi(g_{1})$ are irreducible morphisms. Therefore $\psi$ preserves irreducible morphisms between indecomposable $\Lambda$-modules. By Lemma \ref{Lem, equivalence}, $\psi$ is an equivalence.
\end{proof}

\begin{Rem}\label{standard-counterpart} Notice that since the modified mesh relation $m_x$ for $x=(0,3m-1)$ starts at a point which belongs to the configuration $\mathcal{C}$, it allows us to adjust the modified mesh relation using the projective module (see Step 1.4 above), which is the same as we do for the usual mesh relations. Therefore, the method in Proposition \ref{Prop, lift} applies for the standard RFS algebra which has the isomorphic AR-quiver as $\Lambda$. A similar method applies for each type of standard RFS algebras, see Appendix A for a detailed explanation.
\end{Rem}

\section{Proof of the main result}

Combining Proposition \ref{Prop, lift} and some ideas from \cite{Asashiba2003,Dugas2015,CKL} we prove our main result Theorem \ref{main-result} in this section.

\begin{Def}\label{Def,stable-picard} {\rm(\cite[Definition 1]{Asashiba2003})}
Let $A$ be an algebra. The stable Picard group $\mathrm{StPic}(A)$ of $A$ is the group formed by natural isomorphism classes of stable auto-equivalences of $A$. Let $\mathrm{Pic}'(A)$ be the image of canonical homomorphism $\mathrm{Pic}(A)\rightarrow \mathrm{StPic}(A)$, where $\mathrm{Pic}(A)$ denotes the Picard group formed by natural isomorphism classes of Morita auto-equivalences of $A$.
\end{Def}

The following result was proved in \cite{CKL} using the mutation theory of simple-minded systems. For the completeness we give its proof in Appendix B.

\begin{Prop}\label{H-and-H'}  {\rm(\cite[Lemma 4.10]{CKL})}
	Let $k$ be an algebraically closed field of characteristic $2$, $\Lambda$ be the representative algebra of nonstandard RFS algebras of type $(D_{3m},1/3,1)$, where $m\geq 2$. Then there exists a standard derived auto-equivalence of $\Lambda$ which induces a stable auto-equivalence $H$ of $\Lambda$ such that $H$ induces the automorphism of $\prescript{}{s}{\Gamma}_{\Lambda}$ defined by the swap of the two high vertices.
\end{Prop}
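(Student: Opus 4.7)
The plan is to build the required derived auto-equivalence via simple-minded system (SMS) mutation in $\stmod\Lambda$, following the framework of Koenig--Liu and Dugas that is used in \cite{CKL}. First I would identify the SMS $\mathcal{S}_0 = \{S_1, \ldots, S_m\}$ of simple $\Lambda$-modules and locate its members in $\prescript{}{s}{\Gamma}_{\Lambda}$: before the reshuffling of $\mathcal{C}$ performed in Section 2, one simple (say $S_1$, corresponding to vertex $1$ of $Q$) sits at a high-vertex position of $\prescript{}{s}{\Gamma}_{\Lambda}$, while the remaining $S_2,\ldots,S_m$ lie on the bottom row; all positions are read off from those of the radicals of the indecomposable projectives via the bijection $\Omega_\Lambda\mathcal{S}_0 = \mathcal{C}$ described in Section 2.

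Next, I would construct an explicit sequence of SMS mutations
\[
\mathcal{S}_0 \leadsto \mathcal{S}_1 \leadsto \cdots \leadsto \mathcal{S}_n
\]
whose cumulative effect on the positions in $\prescript{}{s}{\Gamma}_{\Lambda}$ is the swap of the two high vertices. By Koenig--Liu, each individual mutation lifts to a two-term Okuyama--Rickard tilting complex, giving a standard derived equivalence $F_{i+1}\colon D^b(\Lambda_i) \to D^b(\Lambda_{i+1})$ with $\Lambda_0 = \Lambda$. Each $\Lambda_i$ is derived equivalent to $\Lambda$, hence stably equivalent of Morita type by Theorem \ref{dclassRFS}; since standard and nonstandard algebras of type $(D_{3m},1/3,1)$ are not stably equivalent (Theorem \ref{dclassRFS}(1)), each $\Lambda_i$ is itself nonstandard of type $(D_{3m},1/3,1)$. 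Arranging the chain so that the terminal SMS $\mathcal{S}_n$ matches the image of $\mathcal{S}_0$ under the high-vertex swap yields an explicit isomorphism $\Lambda_n \cong \Lambda$ (matching simples of $\Lambda_n$ with the appropriate $S_i$); composing $F_n\circ\cdots\circ F_1$ with this isomorphism produces the desired standard derived auto-equivalence $F$ of $\Lambda$.

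To finish, I would verify that the induced stable auto-equivalence $H := \underline{F}$ realizes the translation-quiver automorphism swapping the two high vertices on all of $\prescript{}{s}{\Gamma}_{\Lambda}$. By construction $H$ has the correct effect on the $\tau$-orbits of the $S_i$, and since any translation-quiver automorphism of $\prescript{}{s}{\Gamma}_{\Lambda}$ commuting with $\tau$ is determined by its values on any $\tau$-section, this propagates to the advertised global automorphism.

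The main obstacle is the combinatorial bookkeeping: exhibiting an explicit mutation chain whose net action on $\mathbb{Z}D_{3m}/\langle\tau^{2m-1}\rangle$ is precisely the high-vertex swap (and nothing else in $\mathrm{Aut}(\prescript{}{s}{\Gamma}_{\Lambda})$), and then producing the concrete identification $\Lambda_n \cong \Lambda$ at the end of the chain so that the composite is a genuine auto-equivalence rather than merely a self-derived equivalence up to isomorphism. This combinatorial analysis is exactly what \cite[Lemma 4.10]{CKL} carries out using mutation theory for simple-minded systems, and it will be reproduced in full in Appendix B.
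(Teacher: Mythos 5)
Your overall strategy coincides with the paper's: mutate the simple-minded system of simple modules, lift the mutation to a standard derived equivalence via Okuyama--Rickard, use the derived/stable classification to conclude that the target algebra is again nonstandard of type $(D_{3m},1/3,1)$, identify it with $\Lambda$, and pin down the induced automorphism of $\prescript{}{s}{\Gamma}_{\Lambda}$ using $\mathrm{Aut}(\prescript{}{s}{\Gamma}_{\Lambda})=\langle\tau\rangle\times\langle\eta\rangle$. However, there is a genuine gap: you never exhibit the mutation (or chain of mutations) whose net effect is the high-vertex swap, and you explicitly defer this ``combinatorial bookkeeping'' to \cite[Lemma 4.10]{CKL} --- which is the very statement to be proved, so as written the argument is circular. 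That combinatorial step is the entire mathematical content. In the paper it is carried out concretely: one performs a \emph{single} left mutation $\mu^{+}_{\mathcal{X}}$ of $\mathcal{S}_{\Lambda}$ at $\mathcal{X}=\{2\}$, computes $\mu^{+}_{\mathcal{X}}(1)$ to be the uniserial module with top $1$ and socle $2$ (sitting at $(2m-2,3m-1)$), $\mu^{+}_{\mathcal{X}}(2)=\Omega_{\Lambda}^{-1}(2)$ (at $(m-1,1)$), and $\mu^{+}_{\mathcal{X}}(i)=i$ for $3\leq i\leq m$, and then corrects the resulting stable equivalence by $\tau_{\Lambda}^{-1}\Omega_{\Lambda}^{-1}$ so that the image system lands exactly on $\{(0,3m-1),(m,1),\dots,(2m-2,1)\}=\eta(\mathcal{S}_{\Lambda})$. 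Without such a computation your plan establishes neither that the required chain exists nor that its net effect is $\eta$ rather than some $\tau^{a}\eta$ or $\tau^{a}$.

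A smaller but real issue is your final identification ``$\Lambda_{n}\cong\Lambda$ by matching simples,'' which is not automatic. The paper obtains $\Pi\cong\Lambda$ by showing that a suitable composite of quiver isomorphisms carries $\mathcal{C}_{\Pi}$ to $\mathcal{C}_{\Lambda}$, hence that the two algebras have isomorphic AR-quivers, and then invokes Riedtmann's classification of RFS algebras by configurations together with the fact (already secured via Theorem \ref{dclassRFS}) that $\Pi$ is nonstandard; only then does the derived equivalence become an auto-equivalence of $\Lambda$. You should make this configuration-theoretic step explicit rather than asserting an isomorphism of algebras from a bijection between sets of simples.
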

	
\xymatrix@R=1.0pc@C=0.9pc@W=0mm{ &&&&&&&&&\bullet\ar[dr]&&\bullet\ar[dr]\ar[dr]^{\gamma}&&\bullet\ar[dr]&&\bullet\ar[dr]& \\
 &&&&&&&&\bullet\ar[ur]\ar[dr]\ar[r]\ar@{{}*{\cdot}{}}[l]&\bullet\ar[r]&\bullet\ar[ur]^{\gamma'}\ar[dr]|{\delta_{3m-3}}\ar[r]^{\al'}&\bullet\ar[r]^{\al}&\bullet\ar[ur]^{\kappa}\ar[dr]\ar[r]^{\be}&\bullet\ar[r]^{\be'}&\bullet\ar[ur]\ar[dr]\ar[r]&\bullet\ar[r]&\bullet\ar@{{}*{\cdot}{}}[r]& \\
	&&&&&&&&&\bullet\ar[ur]|{\xi_{3m-3}\quad}\ar[dr]|{\delta_{3m-4}}&&\bullet\ar[ur]|{\quad\eta_{3m-3}}&&\bullet\ar[ur]&&\bullet\ar[ur]& \\
	&&&&&&&&\bullet\ar[ur]|{\xi_{3m-4}\quad}\ar[dr]|{\delta_{3m-5}\quad}&&\bullet\ar[ur]|{\quad\eta_{3m-4}}&&&&&& \\
	&&&&&&&&&\bullet\ar[ur]|{\quad\eta_{3m-5}}&&&&&&& \\
	&&&&&&&&&&&&&&&& \\
	&&&&&\bullet\ar[dr]|{\delta_3}\ar@{.}[uuurrr]&&&&&&&&&&& \\
	&&&&\bullet\ar[ur]|{\xi_{3}}\ar[dr]|{\delta_2}&&\bullet\ar@{.}[uuurrr]&&&&&&&&&& \\
	&&&\bullet\ar[ur]|{\xi_{2}}\ar[dr]|{\delta_1}&&\bullet\ar[ur]|{\eta_{2}}&&&&&&&&&&& \\
	\ar@{.}[uuuuuuuurrrrrrrr]&&\bullet\ar[ur]|{\xi_{1}}&&\bullet\ar[ur]|{\eta_{1}}&&\ar@{.}[uuuuuuurrrrrrr]&&&&&&&&&& \\	
	-1&&0&&1&&2&&&&&&&&&&
}
$$\mbox{Figure }5$$

\begin{Rem}
Using the fact that $\underline{\mathrm{ind}}\Lambda$ is equivalent to $k\prescript{}{s}{\Gamma}_{\Lambda}/I$, one can give a concrete construction of the stable auto-equivalence $H$ up to a Morita equivalence in viewing of Proposition \ref{Prop, lift}. For each $2\leq i\leq 3m-2$, let $q_{i-1}$ be the path $(0,i)\rightarrow (1,i-1)\rightarrow (1,i)\rightarrow (2,i-1)\rightarrow(2,i)\rightarrow\dots\rightarrow (2m-1,i-1)\rightarrow (2m-1,i)$. For each $0\leq i\leq 2m-2$, let $l_i$ be the path $(i,3m-2)\rightarrow (i,3m)\rightarrow (i+1,3m-2)$, $h_i$ be the path $(i,3m-2)\rightarrow (i,3m-1)\rightarrow (i+1,3m-2)$, $p_i$ be the path $(i,3m-2)\rightarrow (i+1,3m-3)\rightarrow (i+1,3m-2)$. Define a functor $H':k\prescript{}{s}{\Gamma}_{\Lambda}\rightarrow k\prescript{}{s}{\Gamma}_{\Lambda}$ by $H'(x)=\eta(x)$ for each vertex $x$ of $\prescript{}{s}{\Gamma}_{\Lambda}$, where $\eta$ is the automorphism on $\prescript{}{s}{\Gamma}_{\Lambda}$ by the swap of the two high vertices (see Figure 5), and the definition of $H'$ on arrows are given as follows: $H'(\al):=\gamma+l_{2m-1}h_{2m-2}\dots l_{3}h_{2}l_{1}\gamma$, $H'(\gamma):=\al+h_{2m-1}l_{2m-2}\dots h_{3}l_{2}h_{1}\al$, $H'(\delta_{i}):=\delta_{i}+\delta_{i}q_i$, where $1\leq i\leq 3m-3$, $H'(\zeta):=\eta(\zeta)$ for other arrows $\zeta$ in $\prescript{}{s}{\Gamma}_{\Lambda}$. Then it is straightforward to verify that $H'$ preserves all the modified mesh relations and therefore induces a functor $H: k\prescript{}{s}{\Gamma}_{\Lambda}/I\rightarrow k\prescript{}{s}{\Gamma}_{\Lambda}/I$. Moreover, $H$ preserves the radical ${\rad}(~,~)$ and irreducible morphisms, it follows that $H$ is an equivalence (cf. Lemma \ref{Lem, equivalence}).
\end{Rem}

The following result should be compared with \cite[Theorem 3.1]{Asashiba2003} (or Corollary \ref{Prop,stable-picard,standard}) for standard RFS algebras.

\begin{Prop}\label{Prop,stable-picard}
Let $k$ be an algebraically closed field of characteristic $2$, $\Lambda$ be the representative algebra of nonstandard RFS algebras of type $(D_{3m},1/3,1)$, where $m\geq 2$. For any stable auto-equivalence $\phi$ of $\Lambda$, we denote by $[\phi]$ its natural isomorphism class. Then $\mathrm{StPic}(\Lambda)=(\mathrm{Pic}'(\Lambda)\cdot\langle [\Omega_{\Lambda}]\rangle)\cup (\mathrm{Pic}'(\Lambda)\cdot\langle [\Omega_{\Lambda}]\rangle)[H]$, where $\Omega_{\Lambda}$ is the loop functor, and $H$ is a stable auto-equivalence of $\Lambda$ as defined in Proposition \ref{H-and-H'}, which satisfies $[H]^2\in \mathrm{Pic}'(\Lambda)$, and $\langle [\Omega_{\Lambda}]\rangle$ denotes the cyclic subgroup of $\mathrm{StPic}(\Lambda)$ generated by $[\Omega_{\Lambda}]$.
\end{Prop}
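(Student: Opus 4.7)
The plan is to study the canonical group homomorphism
\[
\pi\colon\mathrm{StPic}(\Lambda)\longrightarrow\mathrm{Aut}(\prescript{}{s}{\Gamma}_{\Lambda})
\]
sending a stable auto-equivalence $[\phi]$ to the automorphism of the stable AR-quiver it induces. Since the Loewy length of $\Lambda$ equals $m+2\geq 4$, no block of $\Lambda$ has Loewy length $2$, hence by \cite[Chapter X, Corollary 1.9]{ARS} every stable auto-equivalence commutes with $\tau$ up to isomorphism and $\pi$ is well defined. For $m\geq 2$ the Dynkin diagram $D_{3m}$ has graph-automorphism group $\{1,\eta\}$ (where $\eta$ swaps the two high vertices), so a direct computation on the orbit translation quiver gives
\[
\mathrm{Aut}(\prescript{}{s}{\Gamma}_{\Lambda})=\mathrm{Aut}(\mathbb{Z}D_{3m}/\langle\tau^{2m-1}\rangle)\cong\langle\tau\rangle\times\langle\eta\rangle\cong\mathbb{Z}/(2m-1)\times\mathbb{Z}/2.
\]

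The key algebraic input is Proposition \ref{Prop, lift}: if $\pi([\phi])=1$ then $\phi(X)\cong X$ for every indecomposable non-projective $X$, equivalently for every $X\in\underline{\mathrm{mod}}\Lambda$, and so $\phi$ lifts to a Morita equivalence. Thus $\ker(\pi)\subseteq\mathrm{Pic}'(\Lambda)$. Proposition \ref{H-and-H'} gives $\pi([H])=\eta$, and since $\eta^{2}=1$ this immediately implies $[H]^{2}\in\ker(\pi)\subseteq\mathrm{Pic}'(\Lambda)$. To identify $\pi([\Omega_{\Lambda}])$ I use that $\Lambda$ is symmetric, so $\Omega_{\Lambda}^{2}\cong\tau$; writing $\pi([\Omega_{\Lambda}])=\tau^{a}\eta^{b}$ yields $\tau^{2a}=\tau$, hence $2a\equiv 1\pmod{2m-1}$, and as $2m-1$ is odd this forces $a=m$. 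Crucially $\gcd(m,2m-1)=1$, so $\tau^{m}$ generates $\langle\tau\rangle$, and combined with $\pi([H])=\eta$ the images of $[\Omega_{\Lambda}]$ and $[H]$ already generate all of $\mathrm{Aut}(\prescript{}{s}{\Gamma}_{\Lambda})$; the argument will work uniformly in $b\in\{0,1\}$, so I do not need to pin $b$ down.

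For the decomposition, take an arbitrary $[\phi]\in\mathrm{StPic}(\Lambda)$ and write $\pi([\phi])=\tau^{k}\eta^{\epsilon}$. One solves $ms\equiv k\pmod{2m-1}$ for $s\in\{0,\ldots,2m-2\}$ and then $t\equiv\epsilon-bs\pmod{2}$ for $t\in\{0,1\}$, so that $\pi([\Omega_{\Lambda}]^{s}[H]^{t})=\pi([\phi])$. Then $[\phi]\cdot[H]^{-t}\cdot[\Omega_{\Lambda}]^{-s}\in\ker(\pi)\subseteq\mathrm{Pic}'(\Lambda)$, so $[\phi]\in\mathrm{Pic}'(\Lambda)\cdot\langle[\Omega_{\Lambda}]\rangle$ when $t=0$ and $[\phi]\in\mathrm{Pic}'(\Lambda)\cdot\langle[\Omega_{\Lambda}]\rangle\cdot[H]$ when $t=1$, giving the claimed equality. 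The substantive difficulty has already been absorbed into Proposition \ref{Prop, lift} and Proposition \ref{H-and-H'}; what remains delicate in this proof are the explicit description of $\mathrm{Aut}(\prescript{}{s}{\Gamma}_{\Lambda})$ and the arithmetic identification $a=m$ (which rests on the odd-ness of $2m-1$ and the symmetry of $\Lambda$), both short but essential.
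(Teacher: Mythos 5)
Your proposal is correct and follows essentially the same route as the paper: both reduce to the homomorphism $\mathrm{StPic}(\Lambda)\rightarrow\mathrm{Aut}(\prescript{}{s}{\Gamma}_{\Lambda})=\langle\tau\rangle\times\langle\eta\rangle$, use Proposition \ref{Prop, lift} to place the kernel inside $\mathrm{Pic}'(\Lambda)$, and use Proposition \ref{H-and-H'} to realize $\eta$ by $[H]$. The only cosmetic difference is that the paper cancels the translation part directly via $[\tau_{\Lambda}]=[\Omega_{\Lambda}^{2}]\in\langle[\Omega_{\Lambda}]\rangle$, whereas you compute $\pi([\Omega_{\Lambda}])=\tau^{m}\eta^{b}$ and solve congruences, which is harmless but unnecessary.
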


\begin{proof} Note that each stable auto-equivalence $\phi$ of $\Lambda$ induces an automorphism $f$ of $\prescript{}{s}{\Gamma}_{\Lambda}$ as a translation quiver (cf. \cite[Chapter X, Corollary 1.9]{ARS}). According to \cite{Riedtmann1} (see also \cite[Proposition 2.1]{Asashiba2003}), $Aut(\mathbb{Z}D_{3m})=\langle \tau\rangle\times\langle\eta'\rangle$, where $\eta'$ is the automorphism of $\mathbb{Z}D_{3m}$ which is induced from the automorphism of the quiver $D_{3m}$ defined by the swap of the two high vertices. Then $Aut(\prescript{}{s}{\Gamma}_{\Lambda})=\langle \tau\rangle\times\langle\eta\rangle$, where $\eta$ is the automorphism of $\prescript{}{s}{\Gamma}_{\Lambda}$ induced from $\eta'$.

Let $f=\tau^{r}\eta^{i}\in Aut(\prescript{}{s}{\Gamma}_{\Lambda})$ (where $i=0$ or $1$) be induced from a stable auto-equivalence $\phi$ of $\Lambda$. Then the automorphism of $\prescript{}{s}{\Gamma}_{\Lambda}$ induced by  the stable auto-equivalence $\tau^{-r}_{\Lambda}\phi H^{-i}$ acts as the identity map of the set of vertices, where $H^{-}$ is a quasi-inverse of $H$. By Proposition \ref{Prop, lift}, $\tau^{-r}_{\Lambda}\phi H^{-i}$ lifts to a Morita equivalence. Then $[\tau^{-r}_{\Lambda}\phi H^{-i}]\in \mathrm{Pic}'(\Lambda)$. Since $[\tau_{\Lambda}]=[\Omega_{\Lambda}^{2}]$, $[\tau_{\Lambda}]\in \langle [\Omega_{\Lambda}]\rangle$. Therefore $[\phi]=[\tau_{\Lambda}]^{r}[\tau^{-r}_{\Lambda}\phi H^{-i}][H]^{i}\in (\mathrm{Pic}'(\Lambda)\cdot\langle [\Omega_{\Lambda}]\rangle)\cup (\mathrm{Pic}'(\Lambda)\cdot\langle [\Omega_{\Lambda}]\rangle)[H]$. The fact that $[H]^2\in \mathrm{Pic}'(\Lambda)$ also follows from Proposition \ref{Prop, lift}.
\end{proof}

\begin{Rem}\label{coset-decomposition}
Sometimes the two cosets $\mathrm{Pic}'(\Lambda)\cdot\langle [\Omega_{\Lambda}]\rangle$ and $(\mathrm{Pic}'(\Lambda)\cdot\langle [\Omega_{\Lambda}]\rangle)[H]$ of the subgroup $\mathrm{Pic}'(\Lambda)\cdot\langle [\Omega_{\Lambda}]\rangle$ of the stable Picard group $\mathrm{StPic}(\Lambda)$ are the same. For example, in the case $m=3$, $[\Omega_{\Lambda}]\in \mathrm{Pic}'(\Lambda)\cdot[\tau_{\Lambda}]^{3}[H]$ and therefore $[H]\in \mathrm{Pic}'(\Lambda)\cdot\langle [\Omega_{\Lambda}]\rangle$.
\end{Rem}

\begin{Prop}\label{lift to s.d.e}
Let $k$ be an algebraically closed field of characteristic $2$, $\Lambda$ be the representative algebra of nonstandard RFS algebras of type $(D_{3m},1/3,1)$, where $m\geq 2$. Then each stable auto-equivalence of $\Lambda$ lifts to a standard derived equivalence.
\end{Prop}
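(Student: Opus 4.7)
The plan is to use the structural description of $\mathrm{StPic}(\Lambda)$ provided by Proposition~\ref{Prop,stable-picard} to reduce the assertion to a small list of generators, each of which is already known to lift to a standard derived equivalence. Concretely, given any stable auto-equivalence $\phi$ of $\Lambda$, Proposition~\ref{Prop,stable-picard} yields $[\phi]=[F]\,[\Omega_{\Lambda}]^{r}\,[H]^{i}$ in $\mathrm{StPic}(\Lambda)$ for some $[F]\in\mathrm{Pic}'(\Lambda)$ coming from a Morita auto-equivalence of $\Lambda$, some $r\in\mathbb{Z}$, and some $i\in\{0,1\}$. Hence, up to natural isomorphism, $\phi$ decomposes as a composition $F\,\Omega_{\Lambda}^{r}\,H^{i}$, and it is enough to check that each of these three factors lifts to a standard derived equivalence, since compositions of standard derived equivalences are standard derived equivalences.

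I would then verify the three lifts in turn. For $F$: any Morita auto-equivalence of $\Lambda$ is induced by tensoring with an invertible $\Lambda$-$\Lambda$-bimodule, which viewed as a two-sided tilting complex concentrated in degree $0$ gives a standard derived auto-equivalence of $\Lambda$ whose induced stable functor is the original one. For $\Omega_{\Lambda}$: via Rickard's identification $\underline{\mathrm{mod}}\Lambda\simeq D^{b}(\mathrm{mod}\Lambda)/K^{b}(\mathrm{proj}\Lambda)$ (which applies since $\Lambda$ is self-injective), the loop functor $\Omega_{\Lambda}$ is induced by the shift $[-1]$ on $D^{b}(\mathrm{mod}\Lambda)$, which is the canonical standard derived auto-equivalence. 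For $H$: this is exactly the statement of Proposition~\ref{H-and-H'}. Composing these three lifts produces a standard derived auto-equivalence of $\Lambda$ whose associated stable auto-equivalence is naturally isomorphic to $F\,\Omega_{\Lambda}^{r}\,H^{i}\simeq\phi$.

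The main technical work has already been absorbed into the preceding propositions, most crucially Proposition~\ref{Prop, lift} (which controls the kernel of $\mathrm{StPic}(\Lambda)\to\mathrm{Aut}(\prescript{}{s}{\Gamma}_{\Lambda})$ and thus powers the coset decomposition of Proposition~\ref{Prop,stable-picard}) and Proposition~\ref{H-and-H'} (which upgrades the single nontrivial automorphism of $\prescript{}{s}{\Gamma}_{\Lambda}$ to a genuine standard derived equivalence). With these in hand no further obstruction appears; the argument is essentially the short composition bookkeeping sketched above, and the ``main obstacle'', if any, is only to remember that natural isomorphism of stable functors is preserved under composition so that the lift of the product really does descend to $\phi$ rather than to a different representative of $[\phi]$.
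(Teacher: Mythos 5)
Your proposal is correct and follows exactly the route the paper intends: the paper's own proof is the one-line remark that the statement follows from Propositions \ref{H-and-H'} and \ref{Prop,stable-picard}, and your argument simply spells out the coset decomposition $[\phi]=[F][\Omega_{\Lambda}]^{r}[H]^{i}$ together with the standard lifts of Morita equivalences, of $\Omega_{\Lambda}$ (as the shift), and of $H$. Nothing is missing.
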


\begin{proof} It follows from Proposition \ref{H-and-H'} and Proposition \ref{Prop,stable-picard}.
\end{proof}

\medskip
\begin{proof}[{\bf Proof of Theorem \ref{main-result}}] By Theorem \ref{dclassRFS}, both $A$ and $B$ are derived equivalent to the same nonstandard RFS representative algebra $\Lambda$. Then there exists stable equivalences $\xi:\underline{\mathrm{mod}}A\rightarrow \underline{\mathrm{mod}}\Lambda$ and $\eta:\underline{\mathrm{mod}}B\rightarrow \underline{\mathrm{mod}}\Lambda$ such that $\xi$, $\eta$ lift to standard derived equivalences. By Proposition \ref{lift to s.d.e}, $\eta\phi{\xi}^{-1}:\underline{\mathrm{mod}}\Lambda\rightarrow \underline{\mathrm{mod}}\Lambda$ lifts to a standard derived equivalence. Then $\phi={\eta}^{-1}(\eta\phi{\xi}^{-1})\xi$ lifts to a standard derived equivalence.
\end{proof}

\newpage
\appendix
\section{}

Throughout this appendix we fix the enumeration on the vertices of $A_{n}$, $D_{n}$, $E_{n}$ as follow:

$$\xymatrix{A_{n}: & & &
               1 \ar[r] & 2\ar[r]  & \cdots  \ar[r] & n-1 \ar[r] &n
	}$$

$$\xymatrix{D_{n}:\quad & &
		& & &n & \\
		& & 1 \ar[r] & 2\ar[r]  & \cdots  \ar[r] & n-2 \ar[u] \ar[r] &n-1
	}$$

$$\xymatrix{E_{n}: &
		& & &n & & \\
		& 1 \ar[r] & 2\ar[r]  & \cdots \ar[r] & n-3 \ar[u]\ar[r] & n-2 \ar[r] &n-1
	}$$

The main purpose of this appendix is to prove the following result, which is a corrected form of \cite[Proposition 3.3]{Asashiba2003}. We are grateful to the referee who suggests to add this content.

\begin{Prop}\label{corrected-form}  Let $k$ be an algebraically closed field, and let $A$ be some properly selected representative algebra of standard RFS algebras. Let $\phi:\underline{\mathrm{mod}}A\rightarrow\underline{\mathrm{mod}}A$ be a stable equivalence such that $\phi (X)\cong X$ for any $X\in\underline{\mathrm{mod}}A$. Then $\phi$ lifts to a Morita equivalence.
\end{Prop}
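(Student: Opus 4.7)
The strategy is to mimic the proof of Proposition \ref{Prop, lift}, adapted to the standard setting. Since $\phi$ fixes isoclasses in $\underline{\mathrm{mod}}A$, it restricts to an equivalence $\phi': \underline{\mathrm{ind}}A \to \underline{\mathrm{ind}}A$. Because $A$ is standard, the path category presentation gives isomorphisms $U: k\Gamma_{A}/J \xrightarrow{\sim} \mathrm{ind}A$ (where $J$ is the usual mesh ideal, no twist by any extra path) and $V: k\prescript{}{s}{\Gamma}_{A}/I \xrightarrow{\sim} \underline{\mathrm{ind}}A$, transporting $\phi'$ to an isomorphism $\phi'_{0}: k\prescript{}{s}{\Gamma}_{A}/I \to k\prescript{}{s}{\Gamma}_{A}/I$ which is the identity on objects. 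The goal is to construct a functor $\psi'_{0}: k\Gamma_{A}/J \to k\Gamma_{A}/J$ that is the identity on objects and lifts $\phi'_{0}$; transporting this along $U$ gives $\psi: \mathrm{mod}A \to \mathrm{mod}A$ lifting $\phi$ up to natural isomorphism, and Lemma \ref{Lem, equivalence} will then show that $\psi$ is an equivalence, hence Morita.

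For the construction of $\psi'_{0}$, I would fix, for each type in Theorem \ref{dclassRFS}(3), a representative algebra together with a convenient position of the configuration $\mathcal{C}_{A} := \{\rad P \mid P \text{ indec.\ projective}\}$ inside $\prescript{}{s}{\Gamma}_{A}$, chosen so that a section $\Delta$ of $\prescript{}{s}{\Gamma}_{A} \cong \mathbb{Z}Q/\Pi$ lies disjoint from $\mathcal{C}_{A}$ and so that, along the direction of $\tau^{-1}$, the composition lengths of the middle terms of the relevant AR-sequences satisfy the strict inequalities required by Lemma \ref{Lem, lift a mesh relations}. Then I choose, for each arrow $\alpha$ on $\Delta$, a morphism $\sigma(\alpha)$ in $k\Gamma_{A}/J$ with $q\sigma(\alpha) = \phi'_{0}(q(\alpha))$. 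Propagating column by column in the direction of $\tau^{-1}$, I define $\sigma$ on subsequent arrows by picking arbitrary lifts of $\phi'_{0}$ first and then adjusting by a factorisation through the injective envelope of the source of the mesh (as in Lemma \ref{Lem, lift a mesh relation}(2)), so that each ordinary mesh relation $m_{x}$ with $x \notin \mathcal{C}_{A}$ satisfies $\sigma(m_{x}) = 0$ in $k\Gamma_{A}/J$. The initial column of ``backwards'' arrows is then filled in, starting from the middle and working outwards, using the length inequalities and Lemma \ref{Lem, lift a mesh relations}.

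It remains to define $\sigma$ on the arrows $\iota_{j}\colon \rad P_{j} \hookrightarrow P_{j}$ and $\kappa_{j}\colon P_{j} \twoheadrightarrow P_{j}/\soc P_{j}$ attached to each indecomposable projective. This is the analogue of Step 1.4 in Proposition \ref{Prop, lift} and is actually easier in the standard case: the mesh relation at the vertex $\rad P_{j}$, after composing with $U$, maps to a morphism in $\mathrm{mod}A$ that vanishes in $\underline{\mathrm{mod}}A$, hence factors through a projective, and by the projective cover property of $P_{j}$ it factors through $P_{j}$ itself, giving the desired lifts $\sigma(\iota_{j})$ and $\sigma(\kappa_{j})$. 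Once $\psi'_{0}$ is defined, verifying the equivalence hypotheses of Lemma \ref{Lem, equivalence} is mechanical: $\psi$ fixes isoclasses (condition (3) and half of (2)), sends the radical into the radical (because $\psi'_{0}$ is identity on objects and the radical of $k\Gamma_{A}/J$ is generated by arrows), and preserves irreducible morphisms between indecomposable nonprojectives (by the commutative square relating $\psi$ to the equivalence $\phi$ via $q$, and Lemma \ref{Lem, lift an irreducible morphism}). Irreducibility of $\psi(\iota_{j})$ and $\psi(\kappa_{j})$ then follows from Lemma \ref{Lem, irreducible morphism}, exactly as in Step 2 of Proposition \ref{Prop, lift}.

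The main obstacle is the case analysis: unlike the nonstandard situation, where there is a single family of representatives of type $(D_{3m},1/3,1)$, one must carry out the construction uniformly for each of the eight families (a)--(h) of standard RFS algebras in Theorem \ref{dclassRFS}(3). For each family the representative must be selected so that the length conditions of Lemma \ref{Lem, lift a mesh relations} are available at each inductive step, and so that the configuration $\mathcal{C}_{A}$ sits in a position where Step 1.4 can be applied. A separate subtlety, already pointed out in Remark \ref{counterexample}, is that the hypothesis that $\phi$ fixes isoclasses of objects is essential: without it the existence of self-injective blocks of Loewy length $2$ obstructs the construction, and indeed provides counterexamples to the original statement of \cite[Proposition 3.3]{Asashiba2003}. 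The detailed verification for the eight standard families is best organised according to tree class, and is the substance of Appendix A.
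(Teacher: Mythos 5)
Your proposal follows essentially the same route as the paper's Appendix A: reduce to lifting the object-fixing isomorphism $\phi'$ of $k(\prescript{}{s}{\Gamma}_{A})$ to a functor on $k(\Gamma_{A})$, propagate $\sigma$ from a chosen section using the length inequalities of Lemmas \ref{Lem, lift a mesh relation} and \ref{Lem, lift a mesh relations}, close up at the projective vertices as in Step 1.4 of Proposition \ref{Prop, lift}, and conclude via Lemma \ref{Lem, equivalence}; the type-by-type verification is exactly where the paper puts the work. Two caveats: your requirement that the section be disjoint from $\mathcal{C}_{A}$ is neither what the paper does nor achievable in general (for type $(A_{n},s/n,1)$ the entire top $\tau$-orbit is the configuration, so every section meets it) --- what matters is only that mesh relations starting at configuration vertices are deferred to the projective step; and your blanket recipe silently omits the analogue of Step 1.3, which is genuinely needed for type $(D_{3m},s/3,1)$, where after closing the last column one must re-adjust $\sigma$ at the branch vertex and separately verify that every morphism $(0,3m)\rightarrow(1,3m)$ factoring through a projective vanishes.
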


Using Proposition \ref{corrected-form}, we can reprove \cite[Theorem 3.1]{Asashiba2003}, whose original proof uses \cite[Proposition 3.3]{Asashiba2003}. Note that the main result \cite[Main Theorem]{Asashiba2003} follows from \cite[Theorem 3.1]{Asashiba2003}. By the same reason as in Remark \ref{counterexample}(1), here we also need to assume that the considered algebra has Loewy length greater than $2$.

\begin{Cor}\label{Prop,stable-picard,standard} {\rm(\cite[Theorem 3.1]{Asashiba2003})}
Let $A$ be the representative algebra of representation-finite standard RFS algebras in Proposition \ref{corrected-form} with Loewy length greater than $2$. If $A$ is not of type $(D_{3m},s/3,1)$ with $m\geq 2$ and $3\nmid s\geq 1$, then \begin{equation*}\mathrm{StPic}(A)=\mathrm{Pic}'(A)\cdot\langle [\Omega_{A}]\rangle.\end{equation*} \\ If $A$ is of type $(D_{3m},s/3,1)$ with $m\geq 2$ and $3\nmid s\geq 1$, then \begin{equation*}\mathrm{StPic}(A)=(\mathrm{Pic}'(A)\cdot\langle [\Omega_{A}]\rangle)\cup (\mathrm{Pic}'(A)\cdot\langle [\Omega_{A}]\rangle)[H],\end{equation*} where $H$ is a stable auto-equivalence of $A$ induced from the automorphism of $\prescript{}{s}{\Gamma}_{\Lambda}$ defined by the swap of the two high vertices, which satisfies $[H]^2\in \mathrm{Pic}'(A)$.
\end{Cor}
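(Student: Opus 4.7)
The plan is to mimic the proof of Proposition \ref{Prop,stable-picard}, using Proposition \ref{corrected-form} in place of Proposition \ref{Prop, lift} as the lifting tool. By \cite[Chapter X, Corollary 1.9]{ARS}, and under the Loewy length hypothesis which prevents the pathology of Remark \ref{counterexample}(1), every stable auto-equivalence $\phi$ of $A$ induces an automorphism $f$ of the stable translation quiver $\prescript{}{s}{\Gamma}_A$. Riedtmann's computation of $\mathrm{Aut}(\mathbb{Z}Q)$ for $Q$ Dynkin, together with the description $\prescript{}{s}{\Gamma}_A \cong \mathbb{Z}Q/\Pi$ with $\Pi = \langle \zeta\tau^{-r}\rangle$, gives $\mathrm{Aut}(\prescript{}{s}{\Gamma}_A) = \langle \tau\rangle \cdot G_A$, where $G_A$ is the finite subgroup induced by those diagram automorphisms of $Q$ that commute with $\zeta\tau^{-r}$. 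The first subtask is to compute $G_A$ case by case across the list in Theorem \ref{dclassRFS}(3).

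The second, and main, subtask is to realize every element of $\mathrm{Aut}(\prescript{}{s}{\Gamma}_A)$ by an element of $\mathrm{StPic}(A)$. The generator $\tau$ is realized by $\tau_A = \nu_A\Omega_A^2$, and since the Nakayama functor $\nu_A$ lies in $\mathrm{Pic}(A)$ it contributes only to $\mathrm{Pic}'(A)$, so $[\tau_A] \in \mathrm{Pic}'(A)\cdot\langle[\Omega_A]\rangle$. For each nontrivial element of $G_A$ I claim the following dichotomy: either (i) it is realized by an algebra automorphism of $A$ coming from a symmetry of the quiver-with-relations presentation of the chosen representative, hence by an element of $\mathrm{Pic}'(A)$; or (ii) the type is $(D_{3m},s/3,1)$ with $m\geq 2$, $3\nmid s$, the element in question is the high-vertex swap $\eta$, and it is realized by a standard derived auto-equivalence $H$ (the standard counterpart of the one constructed in Proposition \ref{H-and-H'}, transported to $A$ via a derived equivalence supplied by Theorem \ref{dclassRFS}(2)). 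Verifying this dichotomy is the heart of the proof and reduces in each case to inspecting Asashiba's list of representative algebras in \cite{Asashiba2003} together with the tilting constructions of \cite{Dugas2014,CKL}.

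Once both subtasks are done, the argument closes as in Proposition \ref{Prop,stable-picard}. Given $\phi$ inducing $\tau^r g$ with $g \in G_A$, pick a realization $[\theta]\in\mathrm{Pic}'(A)$ of $g$ (in case (i), or of $g\eta^{-1}$ in case (ii)) and form $\psi := \theta^{-1}\tau_A^{-r}\phi$ (with an additional factor $H^{-1}$ in case (ii)). Then $\psi$ acts as the identity on the vertex set of $\prescript{}{s}{\Gamma}_A$, equivalently $\psi(X)\cong X$ for every $X\in\underline{\mathrm{mod}}A$, so Proposition \ref{corrected-form} yields $[\psi]\in\mathrm{Pic}'(A)$ and hence $[\phi]$ lies in the asserted coset. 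The relation $[H]^2\in\mathrm{Pic}'(A)$ is a special case of the same argument: $H^2$ acts trivially on $\prescript{}{s}{\Gamma}_A$, so Proposition \ref{corrected-form} applies directly to $H^2$.

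The principal obstacle is the case analysis in the second subtask, in particular the clean combinatorial reason why exactly the family $(D_{3m},s/3,1)$ with $3\nmid s$ obstructs the algebraic realization of $\eta$: the admissible generator $\zeta\tau^{-(2m-1)s/3}$ then commutes with $\eta$ (so $\eta\in G_A$) but the corresponding identification in the quiver with relations of $A$ forces the high-vertex swap to mix arrows in an inconsistent way, as is already visible in the configuration combinatorics of \cite{BLR,Riedtmann4}. For all other types the candidate diagram automorphism either fails to commute with $\zeta\tau^{-r}$ (so $G_A$ does not see it) or is visibly an algebra symmetry of $A$, which is the alternative that places it in $\mathrm{Pic}'(A)$; it is this bookkeeping, rather than any further categorical input, that produces the single exceptional family in the statement.
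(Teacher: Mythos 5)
Your overall architecture is the same as the paper's: decompose the induced translation-quiver automorphism as a power of $\tau$ times a ``finite part'', realize that finite part by an explicit stable auto-equivalence, and then apply Proposition \ref{corrected-form} to the composite that acts as the identity on vertices. The closing step and the argument for $[H]^2\in\mathrm{Pic}'(A)$ are exactly as in the paper.

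However, the dichotomy at the heart of your second subtask is false, and this is a genuine gap. For type $(A_n,s/n,1)$ with $n>1$ (the self-injective Nakayama algebras), the reflection $\rho:(p,q)\mapsto(p+q-1,n+1-q)$ of $\mathbb{Z}A_n$ commutes with $\tau^s$ and hence lies in your $G_A$, but it is \emph{not} realized by any algebra automorphism of $A$: the configuration $\mathcal{C}=\{(i,n)\mid 0\leq i\leq s-1\}$ sits in the top row of $\prescript{}{s}{\Gamma}_A$, and every $\tau^a\rho$ sends it into the bottom row, so no Morita equivalence can induce $\tau^a\rho$ (a Morita equivalence must preserve the set of radicals of indecomposable projectives). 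Nor is this the $(D_{3m},s/3,1)$ case. So for $\phi=\Omega_A$ in type $A_n$ your recipe ``pick $[\theta]\in\mathrm{Pic}'(A)$ realizing $g$'' has no solution and the proof halts. The paper's proof supplies the missing third alternative: it checks that $\Omega_A$ itself induces $\tau^b\rho$ for some $b$, and uses $\phi\Omega_A^{-1}\tau_A^{b-a}$ as the auto-equivalence acting trivially on vertices. This is precisely why the subgroup in the statement is $\mathrm{Pic}'(A)\cdot\langle[\Omega_A]\rangle$ rather than $\mathrm{Pic}'(A)\cdot\langle[\tau_A]\rangle$. (A secondary imprecision: the obstruction singling out $(D_{3m},s/3,1)$ is not an ``inconsistent mixing of arrows'' but simply that its configuration contains only one of the two high vertices, so $\tau^a\eta$ never stabilizes $\mathcal{C}$; the correct criterion throughout is whether some representative of the coset $\tau^{\mathbb{Z}}g$ stabilizes $\mathcal{C}$, and when it does not one must look to $\Omega_A$ or to $H$ for a realization.)
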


\begin{proof} Note that the representative algebra $A$ has Loewy length $2$ if and only if $A$ has type $\mathrm{typ}(A)=(A_{1},s,1)$ with $s\geq 1$, so we exclude this type in the following proof.

For a given $A$, let $\mathcal{C}$ be the set of vertices in the stable AR-quiver $\prescript{}{s}{\Gamma}_{A}$ which correspond to radicals of indecomposable projective $A$-modules. According to the proof of Proposition \ref{corrected-form}, we list the positions of $\mathcal{C}$ in $\prescript{}{s}{\Gamma}_{A}$ for each representative algebra $A$ as follows.

\begin{itemize}

\item If $typ(A)=(A_{n},s/n,1)$ with $s,n\geq 1$, then $\mathcal{C}=\{(i,n)\mid 0\leq i\leq s-1\}$.

\item If $typ(A)=(A_{2p+1},s,2)$ with $s,p\geq 1$, then $\mathcal{C}=\{((2p+1)i+j,1),((2p+1)i+p,p+1),$ \\ $((2p+1)i+j+p+1,2p+1)\mid 0\leq i\leq s-1, 0\leq j\leq p-1\}$.

\item If $typ(A)=(D_{n},s,1)$ or $typ(A)=(D_{n},s,2)$ with $n\geq 4$ and $s\geq 1$, then $\mathcal{C}=\{((2n-3)i,n-1),((2n-3)i,n),((2n-3)i+n-1,n-2),((2n-3)i+j,1),\mid 0\leq i\leq s-1, 1\leq j\leq n-3\}$.

\item If $typ(A)=(D_{4},s,3)$ with $s\geq 1$, then $\mathcal{C}=\{(5i,3),(5i,4),(5i+3,2),(5i+1,1)\mid 0\leq i\leq s-1\}$.

\item If $typ(A)=(D_{3m},s/3,1)$ with $m\geq 2$ and $3\nmid s\geq 1$, then $\mathcal{C}=\{((2m-1)i,3m-1), ((2m-1)i+j,1)\mid 0\leq i\leq s-1, m\leq j\leq 2m-2\}$.

\item If $typ(A)=(E_{n},s,1)$ or $typ(A)=(E_{6},s,2)$ with $6\leq n \leq 8$ and $s\geq 1$, then $\mathcal{C}=\{(m_{n}\cdot i+j,1),(m_{n}\cdot i-1,n),(m_{n}\cdot i-2,n-1),(m_{n}\cdot i-1,n-1),(m_{n}\cdot i+(m_{n}-1)/2,n-3)\mid 0\leq i\leq s-1, 0\leq j\leq n-5\}$, where $m_{6}=11$, $m_{7}=17$, $m_{8}=29$.

\end{itemize}

Suppose that $\mathrm{typ}(A)\notin\{(A_{n},s/n,1),(D_{3m},r/3,1)\mid n,r,s\geq 1,m\geq 2, 3\nmid r\}$. Combining a result in \cite{Riedtmann1} (see also \cite[Proposition 2.1]{Asashiba2003}), we can directly prove that each automorphism of $\prescript{}{s}{\Gamma}_{A}$ (as a translation quiver) is of the form $\tau^{a}\rho$, where $\rho$ is an automorphism of $\prescript{}{s}{\Gamma}_{A}$ such that $\mathcal{C}$ is stable under $\rho$. Let $\phi:\underline{\mathrm{mod}}A\rightarrow\underline{\mathrm{mod}}A$ be a stable equivalence which induces an automorphism $f$ of $\prescript{}{s}{\Gamma}_{A}$ (as a translation quiver). Assume $f=\tau^{a}\rho$ with $\mathcal{C}$ stable under $\rho$, then $\rho$ extends to an automorphism of $\Gamma_{A}$, which induces an auto-equivalence of $k(\Gamma_{A})$. Thus, there exists a Morita equivalence $\Psi:\mathrm{mod}A\rightarrow \mathrm{mod}A$ which induces a stable equivalence $\psi:\underline{\mathrm{mod}}A\rightarrow\underline{\mathrm{mod}}A$, such that the automorphism of $\prescript{}{s}{\Gamma}_{A}$ induced by $\psi$ is $\rho$. Since $\phi(\tau_{A}^{a}\psi)^{-1}$ induces identity automorphism of $\prescript{}{s}{\Gamma}_{A}$, by Proposition \ref{corrected-form}, it lifts to a Morita equivalence. Since $[\phi]=[\phi(\tau_{A}^{a}\psi)^{-1}][\tau_{A}]^{a}[\psi]$ with $[\phi(\tau_{A}^{a}\psi)^{-1}],[\psi]\in \mathrm{Pic}'(A)$ and $[\tau_{A}]\in\mathrm{Pic}'(A)\cdot\langle [\Omega_{A}]\rangle$, $[\phi]\in\mathrm{Pic}'(A)\cdot\langle [\Omega_{A}]\rangle$.

Suppose that $\mathrm{typ}(A)=(A_{n},s/n,1)$ with $n> 1, s\geq 1$. Then each automorphism of $\prescript{}{s}{\Gamma}_{A}$ (as a translation quiver) is of the form $\tau^{a}$ or $\tau^{a}\rho$, where $\rho$ is given by $(p,q)\mapsto (p+q-1,n+1-q)$. It can be shown that the automorphism of $\prescript{}{s}{\Gamma}_{A}$ induced by $\Omega_{A}$ is $\tau^{b}\rho$ for some $b$. Let $\phi:\underline{\mathrm{mod}}A\rightarrow\underline{\mathrm{mod}}A$ be a stable equivalence which induces an automorphism $f$ of $\prescript{}{s}{\Gamma}_{A}$ (as a translation quiver). If $f=\tau^{a}$ for some $a$, by Proposition \ref{corrected-form}, $\phi\tau_{A}^{-a}$ lifts to a Morita equivalence. Then $[\phi]=[\phi\tau_{A}^{-a}][\tau_{A}]^{a}\in \mathrm{Pic}'(A)\cdot\langle [\Omega_{A}]\rangle$. If $f=\tau^{a}\rho$ for some $a$, by Proposition \ref{corrected-form}, $\phi\Omega_{A}^{-1}\tau_{A}^{b-a}$ lifts to a Morita equivalence. Then $[\phi]=[\phi\Omega_{A}^{-1}\tau_{A}^{b-a}][\tau_{A}]^{a-b}[\Omega_{A}]\in \mathrm{Pic}'(A)\cdot\langle [\Omega_{A}]\rangle$.

Suppose that $\mathrm{typ}(A)=(D_{3m},s/3,1)$ with $m\geq 2$ and $3\nmid s\geq 1$. Then each automorphism of $\prescript{}{s}{\Gamma}_{A}$ (as a translation quiver) is of the form $\tau^{a}$ or $\tau^{a}\eta$, where $\eta$ is the automorphism of $\prescript{}{s}{\Gamma}_{\Lambda}$ defined by the swap of the two high vertices. By the same method, it can be shown that for each stable auto-equivalence $\phi$ of $A$, $[\phi]\in\mathrm{Pic}'(A)\cdot\langle [\Omega_{A}]\rangle$ or $[\phi]\in(\mathrm{Pic}'(A)\cdot\langle [\Omega_{A}]\rangle)[H]$. The fact that $[H]^2\in \mathrm{Pic}'(A)$ also follows from Proposition \ref{corrected-form}.
\end{proof}

We now turn to the proof of Proposition \ref{corrected-form}. For each type $(Q,f,t)$ of standard RFS algebras, Asashiba gave a representative algebra $\Lambda(Q,f,t)$ inside its derived equivalence class, all the representative algebras are listed in \cite[Appendix 2]{Asashiba2003}. Unless otherwise stated, we will choose the representative algebra $A$ in Proposition \ref{corrected-form} as $\Lambda(Q,f,t)$.

Since $A$ is standard, there is a well-behaved isomorphism $U:k(\Gamma_{A})\rightarrow \mathrm{ind}A$ such that it maps each vertex of $\Gamma_{A}$ to the corresponding indecomposable module and maps each arrow of $\Gamma_{A}$ to an irreducible morphism; moreover, $U$ induces a well-behaved isomorphism $V: k(\prescript{}{s}{\Gamma}_{A})\simeq \underline{\mathrm{ind}}A$. Therefore, we can adopt the method in Proof of Proposition \ref{Prop, lift} to give a proof of Proposition \ref{corrected-form}. By analysing the proof of Proposition \ref{Prop, lift}, we know that if we can construct a functor $\Phi: k(\Gamma_{A})\rightarrow k(\Gamma_{A})$ from a given isomorphism functor $\phi': k(\prescript{}{s}{\Gamma}_{A})\rightarrow k(\prescript{}{s}{\Gamma}_{A})$  with $\phi'(x)=x$ for all $x\in \prescript{}{s}{\Gamma}_{A}$, then $\Phi$ becomes automatically an isomorphism functor under our assumption. Thus we can reduce the proof of Proposition \ref{corrected-form} to the construction of a functor $\Phi: k(\Gamma_{A})\rightarrow k(\Gamma_{A})$ lifting $\phi'$.

We shall give the construction in each type using the similar idea as Step 1 in the proof of Proposition \ref{Prop, lift}. Recall that Step 1 in the proof of Proposition \ref{Prop, lift} divides into four substeps (from Step 1.1 to Step 1.4), however, we do not need Step 1.3 in most cases except for the type $(D_{3m},s/3,1)$ with $m\geq 2$ and $3\nmid s\geq 1$.

\medskip
In the following, $\mathcal{C}$ is always assumed to be the set of vertices in $\prescript{}{s}{\Gamma}_{A}$ which correspond to radicals of indecomposable projective $A$-modules and $\star$ denotes the positions of $\mathcal{C}$.

\medskip
{\bf 1. Type $(A_{n},s/n,1)$ with $s,n\geq 1$.} Let $A=\Lambda (A_{n},s/n,1)$ be the self-injective Nakayama algebra given by the quiver below
 with relations $\alpha_{i+n}\cdots\alpha_{i+1}\alpha_{i}=0$ for all $i\in \{1,2,\cdots,s\}=\mathbb{Z}/\langle s \rangle$.

$$
\vcenter{
	\xymatrix@R=1.9pc@C=1.9pc{
	& s \ar[dl]_{\alpha_s} & \ar[l]_{\alpha_{s-1}} \ar@{{}*{\cdot}{}}[r] &  \\
	1  \ar[dr]_{\alpha_1} & & & \\
	& 2 \ar[r]_{\alpha_2} & \ar@{{}*{\cdot}{}}[r]&\\
}}$$

Then $\prescript{}{s}{\Gamma}_{A}\cong\mathbb{Z}A_{n}/\langle\tau^{s}\rangle$ is of the form:

$$
\vcenter{
	\xymatrix@R=2.0pc@C=1.0pc {
	&&&&\star\ar[dr]|{\beta^{0}_{n-1}} &&\star\ar[dr]|{\beta^{1}_{n-1}} &&\star &\ar@{{}*{\cdot}{}}[rr]&&&\star\ar[dr]|{\beta^{s-2}_{n-1}} &&\star\ar[dr]|{\beta^{s-1}_{n-1}} && \star  \\
	&&&\bullet\ar[ur]|{\alpha^{0}_{n-1}} &&\bullet\ar[ur]|{\alpha^{1}_{n-1}} &&\bullet\ar[ur]|{\alpha^{2}_{n-1}} &\ar@{{}*{\cdot}{}}[rr]&&&\bullet\ar[ur]|{\alpha^{s-2}_{n-1}} &&\bullet\ar[ur]|{\alpha^{s-1}_{n-1}} && \bullet\ar[ur]|{\alpha^{0}_{n-1}} &\\
	&&\bullet\ar@{{}*{\cdot}{}}[ur]\ar[dr]|{\beta^{0}_{2}} &&\bullet\ar@{{}*{\cdot}{}}[ur]\ar[dr]|{\beta^{1}_{2}} &&\bullet\ar@{{}*{\cdot}{}}[ur] &\ar@{{}*{\cdot}{}}[rr]&&&\bullet\ar@{{}*{\cdot}{}}[ur]\ar[dr]|{\beta^{s-2}_{2}} &&\bullet\ar@{{}*{\cdot}{}}[ur]\ar[dr]|{\beta^{s-1}_{2}} && \bullet\ar@{{}*{\cdot}{}}[ur] &&\\
    &\bullet\ar[ur]|{\alpha^{0}_{2}}\ar[dr]|{\beta^{0}_{1}} &&\bullet\ar[ur]|{\alpha^{1}_{2}}\ar[dr]|{\beta^{1}_{1}} &&\bullet\ar[ur]|{\alpha^{2}_{2}} &\ar@{{}*{\cdot}{}}[rr]&&&\bullet\ar[ur]|{\alpha^{s-2}_{2}}\ar[dr]|{\beta^{s-2}_{1}} &&\bullet\ar[ur]|{\alpha^{s-1}_{2}}\ar[dr]|{\beta^{s-1}_{1}} && \bullet\ar[ur]|{\alpha^{0}_{2}} &&&\\
    \bullet\ar[ur]|{\alpha^{0}_{1}} &&\bullet\ar[ur]|{\alpha^{1}_{1}} &&\bullet\ar[ur]|{\alpha^{2}_{1}} &\ar@{{}*{\cdot}{}}[rr]&&&\bullet\ar[ur]|{\alpha^{s-2}_{1}} &&\bullet\ar[ur]|{\alpha^{s-1}_{1}} && \bullet\ar[ur]|{\alpha^{0}_{1}} &&&&\\
    0 &&1 &&2 &\ar@{{}*{\cdot}{}}[rr]&&&s-2 &&s-1 && s &&&&
}}$$

By the position of $\mathcal{C}$ in $\prescript{}{s}{\Gamma}_{A}$, one can show that in $\prescript{}{s}{\Gamma}_{A}$ the upward arrows correspond to irreducible monomorphisms and the downward arrows correspond to irreducible epimorphisms. Choose a section $A_{n}'$ in $\prescript{}{s}{\Gamma}_{A}$ as follow:

$$
\vcenter{
\xymatrix {
\bullet\ar[r]^{\alpha^{0}_{1}}&\bullet\ar[r]^{\alpha^{0}_{2}}&\bullet\ar@{{}*{\cdot}{}}[rr]&&\bullet\ar[r]^{\alpha^{0}_{n-1}}&\star
}}
$$

Let $\phi': k(\prescript{}{s}{\Gamma}_{A})\rightarrow k(\prescript{}{s}{\Gamma}_{A})$ be an isomorphism which maps each object in $k(\prescript{}{s}{\Gamma}_{A})$ to itself.  To lift $\phi': k(\prescript{}{s}{\Gamma}_{A})\rightarrow k(\prescript{}{s}{\Gamma}_{A})$ to a functor $\Phi:k(\Gamma_{A})\rightarrow k(\Gamma_{A})$, one can first choose morphisms $\Phi(\alpha^{0}_{1}),\cdots,\Phi(\alpha^{0}_{n-1})$ which lift $\phi'(\alpha^{0}_{1}),\cdots,\phi'(\alpha^{0}_{n-1})$ respectively. Using Lemma \ref{Lem, lift a mesh relation}(2) (and similar result as Lemma \ref{Lem, preserve irreducible morphism} for $A$), one can lift arrows in $\prescript{}{s}{\Gamma}_{A}$ from the section $A_{n}'$ to the right. Now assume that the values of $\Phi$ on all arrows of $\prescript{}{s}{\Gamma}_{A}$ except $\beta^{s-1}_{1},\cdots,\beta^{s-1}_{n-1}$ have been defined, which satisfy $\Phi(m_x)=0$ for each vertex $x$ such that $x$ is not in $\mathcal{C}$ and such that the values of $\Phi$ on all arrows in $m_x$ have been defined (which corresponds to Step 1.1 in Proof of Proposition \ref{Prop, lift}). Since $\alpha^{s-1}_{1},\cdots,\alpha^{s-1}_{n-1}$ correspond to irreducible monomorphisms, by Lemma \ref{Lem, lift a mesh relations}(2), one can define $\Phi(\beta^{s-1}_{1}),\cdots,\Phi(\beta^{s-1}_{n-1})$ from the bottom to the top such that $\Phi(m_x)=0$ for each vertex $x$ which is not in $\mathcal{C}$ (which corresponds to Step 1.2 in Proof of Proposition \ref{Prop, lift}). Finally, we define the values of $\Phi$ on the arrows of ${\Gamma}_{A}$ which link to projective vertices (which corresponds to Step 1.4 in Proof of Proposition \ref{Prop, lift}).

\medskip
{\bf 2. Type $(A_{2p+1},s,2)$ with $s,p\geq 1$.} Let $A=\Lambda (A_{2p+1},s,2)$ be the canonical M\"{o}bius algebra given by the quiver below with relations

\begin{enumerate}
\item $\alpha^{i}_{p}\cdots\alpha^{i}_{0}=\beta^{i}_{p}\cdots\beta^{i}_{0}$ for all $i\in \{0,\cdots,s-1\}$;
\item $\beta^{i+1}_{0}\alpha^{i}_{p}=\alpha^{i+1}_{0}\beta^{i}_{p}=0$ for all $i\in \{0,\cdots,s-2\}$ and $\alpha^{0}_{0}\alpha^{s-1}_{p}=\beta^{0}_{0}\beta^{s-1}_{p}=0$;
\item Paths of length $p+2$ are equal to $0$.
\end{enumerate}

$$
\vcenter{
\xymatrix@R=1.6pc@C=1.9pc{
&&\bullet\ar[dl]_{\beta^{s-1}_{p}}&\cdots\ar[l]_(0.3){\beta^{s-1}_{p-1}}&\ar@{{}*{\cdot}{}}[ddrr]&& \\
&\bullet\ar[dl]_{\beta^{0}_{0}}\ar[d]_{\alpha^{0}_{0}}&\bullet\ar[l]_{\alpha^{s-1}_{p}}&
\cdots\ar[l]^(0.3){\alpha^{s-1}_{p-1}}&\ar@{{}*{\cdot}{}}[dr]&& \\
\bullet\ar[d]_{\beta^{0}_{1}}&\bullet\ar[d]_{\alpha^{0}_{1}}&&&&& \\
\vdots\ar[d]_{\beta^{0}_{p-1}}&\vdots\ar[d]_{\alpha^{0}_{p-1}}&&&&\vdots&\vdots \\
\bullet\ar[dr]_{\beta^{0}_{p}}&\bullet\ar[d]_{\alpha^{0}_{p}}&&&&\bullet\ar[u]_{\alpha^{2}_{1}}&\bullet\ar[u]_{\beta^{2}_{1}} \\
&\bullet\ar[dr]_{\beta^{1}_{0}}\ar[r]_{\alpha^{1}_{0}}&\bullet\ar[r]_{\alpha^{1}_{1}}&\cdots\ar[r]_{\alpha^{1}_{p-1}}&\bullet\ar[r]_{\alpha^{1}_{p}}
&\bullet\ar[u]_{\alpha^{2}_{0}}\ar[ur]_{\beta^{2}_{0}}& \\
&&\bullet\ar[r]_{\beta^{1}_{1}}&\cdots\ar[r]_{\beta^{1}_{p-1}}&\bullet\ar[ur]_{\beta^{1}_{p}}&&
}}
$$

Let $\eta$ be the automorphism of $\mathbb{Z}A_{2p+1}$ given by $(m,n)\mapsto (m+n-1-p,2p+2-n)$. Then part of $\prescript{}{s}{\Gamma}_{A}\cong\mathbb{Z}A_{2p+1}/\langle\tau^{(2p+1)s}\eta\rangle$ is of the form:

$$
\vcenter{
\xymatrix@R=1pc@C=0.3pc {
\cdots&\star\ar[dr]&&\star\ar@{{}*{\cdot}{}}[rrrr]&&&&\star\ar[dr]&&\star\ar[dr]&&\bullet\ar[dr]|{\alpha_{2p}}
&&\bullet\ar[dr]\ar@{{}*{\cdot}{}}[rrrrrrrr]&&&&&&&&\bullet\ar[dr]&&\star&\cdots\\
&&\bullet\ar[dr]\ar[ur]&&&&&&\bullet\ar[dr]\ar[ur]&&\bullet\ar[dr]\ar[ur]&&\bullet\ar[dr]|{\alpha_{2p-1}}\ar[ur]|{\beta_{2p}}&&\bullet&&&&&&\bullet\ar[dr]
\ar[ur]&&\bullet\ar[dr]\ar[ur]&& \\
\cdots&\bullet\ar[ur]&&\bullet\ar@{{}*{\cdot}{}}[rrrr]&&&&\bullet\ar[ur]&&\bullet\ar[ur]&&\bullet\ar[ur]&&\bullet
\ar@{{}*{\cdot}{}}[dddrrr]\ar@{{}*{\cdot}{}}[rrrrrr]\ar[ur]|{\beta_{2p-1}}&&&&&&\bullet\ar[ur]&&\bullet\ar[ur]&&\bullet&\cdots \\
&&&&&&&&&&&&&&&&&&&&&& \\
&&&&&&&&&&&&&&&&&&&&&& \\
&&&&&&&&&&&&&&&&\star\ar@{{}*{\cdot}{}}[dddrrr]\ar@{{}*{\cdot}{}}[uuurrr]&&&&&&&& \\
&&&&&&&&&&&&&&&&&&&&&&&& \\
&&&&&&&&&&&&&&&&&&&&&&&& \\
\cdots&\bullet\ar@{{}*{\cdot}{}}[uuuuuu]\ar[dr]&&\bullet\ar@{{}*{\cdot}{}}[uuuuuu]\ar@{{}*{\cdot}{}}[rrrr]
&&&&\bullet\ar@{{}*{\cdot}{}}[uuuuuu]\ar[dr]&&\bullet\ar@{{}*{\cdot}{}}[uuuuuu]\ar[dr]&&
\bullet\ar[dr]\ar@{{}*{\cdot}{}}[uuuuuu]&&\bullet\ar@{{}*{\cdot}{}}[uuuuuu]
\ar@{{}*{\cdot}{}}[uuurrr]\ar@{{}*{\cdot}{}}[rrrrrr]\ar[dr]|{\beta_{2}}&&&&&&\bullet\ar@{{}*{\cdot}{}}[uuuuuu]\ar[dr]&&
\bullet\ar[dr]\ar@{{}*{\cdot}{}}[uuuuuu]&&\bullet\ar@{{}*{\cdot}{}}[uuuuuu]&\cdots \\
&&\bullet\ar[dr]\ar[ur]&&&&&&\bullet\ar[dr]\ar[ur]&&\bullet\ar[dr]\ar[ur]&&\bullet\ar[dr]|{\beta_{1}}\ar[ur]|{\alpha_2}&&\bullet&&&&&&\bullet\ar[dr]
\ar[ur]&&\bullet\ar[dr]\ar[ur]&&\\
\cdots&\star\ar[ur]&&\star\ar@{{}*{\cdot}{}}[rrrr]&&&&\star\ar[ur]&&\star\ar[ur]&&\bullet\ar[ur]|{\alpha_1}
&&\bullet\ar[ur]\ar@{{}*{\cdot}{}}[rrrrrrrr]&&&&&&&&\bullet\ar[ur]&&\star&\cdots\\
\cdots&0&&1&&&&p-2&&p-1&&p&&p+1&&&&&&&&2p&&2p+1&\cdots
}}
$$
where the set $\mathcal{C}$ is stable under $\tau^{2p+1}$. Choose a section $A_{2p+1}'$ in $\prescript{}{s}{\Gamma}_{A}$ as follow:

$$
\vcenter{
\xymatrix {
\bullet\ar[r]^{\alpha_1}&\bullet\ar[r]^{\alpha_2}&\bullet\ar@{{}*{\cdot}{}}[rr]&&\bullet\ar[r]^{\alpha_p}&\star&
\bullet\ar[l]_{\alpha_{p+1}}&&\bullet\ar@{{}*{\cdot}{}}[ll]&\bullet\ar[l]_{\alpha_{2p}}&\bullet\ar[l]_{\alpha_{2p+1}}
}}
$$

By the position of $\mathcal{C}$ in $\prescript{}{s}{\Gamma}_{A}$, one can show that each arrow in $A_{2p+1}'$ corresponds to an irreducible monomorphism. Let $\phi': k(\prescript{}{s}{\Gamma}_{A})\rightarrow k(\prescript{}{s}{\Gamma}_{A})$ be an isomorphism which maps each object in $k(\prescript{}{s}{\Gamma}_{A})$ to itself.  To lift $\phi': k(\prescript{}{s}{\Gamma}_{A})\rightarrow k(\prescript{}{s}{\Gamma}_{A})$ to a functor $\Phi:k(\Gamma_{A})\rightarrow k(\Gamma_{A})$, one can first choose morphisms $\Phi(\alpha_{1}),\cdots,\Phi(\alpha_{2p})$ which lift $\phi'(\alpha_{1}),\cdots,\phi'(\alpha_{2p})$ respectively. Using Lemma \ref{Lem, lift a mesh relation}(1), one can lift arrows in $\prescript{}{s}{\Gamma}_{A}$ from the section $A_{2p+1}'$ to the left. Now assume that the values of $\Phi$ on all arrows of $\prescript{}{s}{\Gamma}_{A}$ except $\beta_{1},\cdots,\beta_{2p}$ have been defined, which satisfy $\Phi(m_x)=0$ for each vertex $x$ such that $x$ is not in $\mathcal{C}$ and such that the values of $\Phi$ on all arrows in $m_x$ have been defined. Since $\alpha_{1},\cdots,\alpha_{2p}$ correspond to irreducible monomorphisms, by Lemma \ref{Lem, lift a mesh relations}(2), one can define $\Phi(\beta_{1}),\cdots,\Phi(\beta_{2p})$ from both sides to the middle such that $\Phi(m_x)=0$ for each vertex $x$ which is not in $\mathcal{C}$. Finally, we define the values of $\Phi$ on the arrows of ${\Gamma}_{A}$ which link to projective vertices.

\medskip
{\bf3. Type $(D_n,s,1)$ with $n\geq 4,s\geq 1$.} The algebra $B=\Lambda(D_n,s,1)$ is given by the quiver below with relations

\begin{enumerate}
\item $\alpha^{i}_{1}\alpha^{i}_{2}\cdots\alpha^{i}_{n-2}=\beta^{i}_{1}\beta^{i}_{0}=\gamma^{i}_{1}\gamma^{i}_{0}$ for all $i\in \{0,\cdots,s-1\}$;
\item For all $i\in \{0,\cdots,s-1\}=\mathbb{Z}/\langle s\rangle$, $\beta^{i+1}_{0}\alpha^{i}_{1}=
\gamma^{i+1}_{0}\alpha^{i}_{1}=\alpha^{i+1}_{n-2}\beta^{i}_{1}=\gamma^{i+1}_{0}\beta^{i}_{1}=\alpha^{i+1}_{n-2}\gamma^{i}_{1}=
\beta^{i+1}_{0}\gamma^{i}_{1}=0$;
\item For all $i\in \{0,\cdots,s-1\}=\mathbb{Z}/\langle s\rangle$ and for all
$j\in \{1,\cdots,n-2\}=\mathbb{Z}/\langle n-2\rangle$, $\alpha^{i+1}_{j-n+2}\cdots\alpha^{i}_{j}=0$, $\beta^{i+1}_{0}\beta^{i}_{1}\beta^{i}_{0}=\beta^{i+1}_{1}\beta^{i+1}_{0}\beta^{i}_{1}=0$, $\gamma^{i+1}_{0}\gamma^{i}_{1}\gamma^{i}_{0}=\gamma^{i+1}_{1}\gamma^{i+1}_{0}\gamma^{i}_{1}=0$.
\end{enumerate}

$$
\vcenter{
\xymatrix@R=1.5pc@C=1.5pc{
&&\bullet\ar[dd]_{\alpha^{s-1}_{1}}&\cdots\ar[l]^{\alpha^{s-1}_{2}}&&& \\
&&&\cdots\ar[ld]_(0.3){\beta^{s-1}_{1}}&\ar@{{}*{\cdot}{}}[dr]&& \\
\bullet\ar[d]_{\alpha^{0}_{n-3}}&&\bullet\ar[ll]_{\alpha^{0}_{n-2}}\ar[ld]_{\beta^{0}_{0}}\ar[d]^{\gamma^{0}_{0}}
&\cdots\ar[l]_(0.3){\gamma^{s-1}_{1}}&&& \\
\vdots\ar[d]_{\alpha^{0}_{2}}&\bullet\ar[rd]_{\beta^{0}_{1}}&\bullet\ar[d]^{\gamma^{0}_{1}}&&\vdots&\vdots&\vdots \\
\bullet\ar[rr]_{\alpha^{0}_{1}} &&\bullet\ar[dd]_{\alpha^{1}_{n-2}}\ar[r]_{\gamma^{1}_{0}}\ar[rd]_{\beta^{1}_{0}}
&\bullet\ar[r]_{\gamma^{1}_{1}}&\bullet\ar[rr]_{\alpha^{2}_{n-2}}\ar[u]^{\gamma^{2}_{0}}\ar[ru]_{\beta^{2}_{0}}&&
\bullet\ar[u]_{\alpha^{2}_{n-3}} \\
&&&\bullet\ar[ru]_{\beta^{1}_{1}}&&& \\
&&\bullet\ar[r]_{\alpha^{1}_{n-3}}&\cdots\ar[r]_{\alpha^{1}_{2}}&\bullet\ar[uu]_{\alpha^{1}_{1}}&&
}}
$$

When $s=1$, we take $A=B$. Then $\prescript{}{s}{\Gamma}_{A}\cong\mathbb{Z}D_{n}/\langle\tau^{2n-3}\rangle$ and we may set
$$\mathcal{C}=\{(0,n-1),(0,n),(n-1,n-2),(1,1),(2,1),\cdots,(n-3,1)\},$$
\noindent where part of $\prescript{}{s}{\Gamma}_{A}\cong\mathbb{Z}D_{n}/\langle\tau^{2n-3}\rangle$ is of the form:

$$
\vcenter{
\xymatrix@R=1.2pc@C=1.4pc {
&&&&&\bullet\ar[dr]|{\gamma_{n-2}}&&\bullet&&& \\
&&&&\star\ar@{{}*{\cdot}{}}[ddddllll]\ar[ur]|{\alpha_{n-2}}\ar[r]|{\alpha_{n-1}}\ar[dr]_{\alpha_{n-3}}&\bullet\ar[r]|{\gamma_{n-1}}
&\bullet\ar[ur]|{\beta_{n-2}}\ar[r]|{\beta_{n-1}}\ar[dr]|{\beta_{n-3}}&\bullet&&& \\
&&&&&\bullet\ar[ur]|{\gamma_{n-3}}\ar[dr]_{\alpha_{n-4}}&&\bullet\ar@{{}*{\cdot}{}}[dr]&&&\\
&&&&&&\bullet\ar@{{}*{\cdot}{}}[dr]\ar[ur]_{\gamma_{n-4}}&&\bullet\ar[dr]^{\beta_{2}}&& \\
&&&&&&&\bullet\ar[ur]_{\gamma_{2}}\ar[dr]_{\alpha_{1}}&&\bullet\ar[dr]^{\beta_{1}}& \\
&&&&&&&&\bullet\ar[ur]_{\gamma_{1}}&&\bullet \\
n-1&\ar@{{}*{\cdot}{}}[rrrrrr]&&&&&&&2n-4&&2n-3
}}
$$

 By the position of $\mathcal{C}$ in $\prescript{}{s}{\Gamma}_{A}$, one can show that the arrows $\beta_{1},\beta_{2},\cdots,\beta_{n-3}$ correspond to irreducible epimorphisms. Choose a section $D_{n}'$ in $\prescript{}{s}{\Gamma}_{A}$ as follow:

$$
\vcenter{
\xymatrix {
&&&&&&\bullet& \\
\bullet&\bullet\ar[l]_{\alpha_1}&\bullet\ar[l]_{\alpha_2}&&\bullet\ar@{{}*{\cdot}{}}[ll]&\bullet\ar[l]_{\alpha_{n-4}}&
\star\ar[l]_{\alpha_{n-3}}\ar[r]^{\alpha_{n-2}}\ar[u]^{\alpha_{n-1}}&\bullet
}}
$$

Let $\phi': k(\prescript{}{s}{\Gamma}_{A})\rightarrow k(\prescript{}{s}{\Gamma}_{A})$ be an isomorphism which maps each object in $k(\prescript{}{s}{\Gamma}_{A})$ to itself.  To lift $\phi': k(\prescript{}{s}{\Gamma}_{A})\rightarrow k(\prescript{}{s}{\Gamma}_{A})$ to a functor $\Phi:k(\Gamma_{A})\rightarrow k(\Gamma_{A})$, one can first choose morphisms $\Phi(\alpha_{1}),\cdots,\Phi(\alpha_{n-1})$ which lift $\phi'(\alpha_{1}),\cdots,\phi'(\alpha_{n-1})$ respectively. Using Lemma \ref{Lem, lift a mesh relation}(1), one can lift arrows in $\prescript{}{s}{\Gamma}_{A}$ from the section $D_{n}'$ to the left. Now assume that the values of $\Phi$ on all arrows of $\prescript{}{s}{\Gamma}_{A}$ except $\gamma_{1},\cdots,\gamma_{n-1}$ have been defined, which satisfy $\Phi(m_x)=0$ for each vertex $x$ such that $x$ is not in $\mathcal{C}$ and such that the values of $\Phi$ on all arrows in $m_x$ have been defined. Since $\beta_{1},\cdots,\beta_{n-3}$ correspond to irreducible epimorphisms, by Lemma \ref{Lem, lift a mesh relations}(1), one can define $\Phi(\gamma_{1}),\cdots,\Phi(\gamma_{n-1})$ from the bottom to the top such that $\Phi(m_x)=0$ for each vertex $x$ which is not in $\mathcal{C}$. Finally, we define the values of $\Phi$ on the arrows of ${\Gamma}_{A}$ which link to projective vertices.

When $s>1$, since there exists a covering $\mathbb{Z}D_{n}/\langle\tau^{(2n-3)s}\rangle\rightarrow\mathbb{Z}D_{n}/\langle\tau^{2n-3}\rangle$ of stable translation quivers and $$\mathcal{C}=\{(0,n-1),(0,n),(n-1,n-2),(1,1),(2,1),\cdots,(n-3,1)\}$$ is a configuration of $\mathbb{Z}D_{n}/\langle\tau^{2n-3}\rangle$, by \cite[Proposition 2.3]{Riedtmann2}, $$\mathcal{C}'=\{((2n-3)p,n-1),((2n-3)p,n),((2n-3)p+n-1,n-2),((2n-3)p+1,1),$$ $$((2n-3)p+2,1),\cdots,((2n-3)p+n-3,1)\mid 0\leq p\leq s-1\}$$ is a configuration of $\mathbb{Z}D_{n}/\langle\tau^{(2n-3)s}\rangle$. According to \cite[Proposition 1.3]{BLR}, there exists a standard RFS algebra $A$ such that $\Gamma_{A}\cong(\mathbb{Z}D_{n}/\langle\tau^{(2n-3)s}\rangle)_{\mathcal{C}'}$. Using a similar method, it can be shown that each isomorphism $\phi': k(\prescript{}{s}{\Gamma}_{A})\rightarrow k(\prescript{}{s}{\Gamma}_{A})$ which maps each object in $k(\prescript{}{s}{\Gamma}_{A})$ to itself lifts to a functor $\Phi:k(\Gamma_{A})\rightarrow k(\Gamma_{A})$.

\medskip
{\bf 4. Type $(D_{n},s,2)$ with $n\geq 4$, $s\geq 1$.} The stable AR-quiver is of the form $\mathbb{Z}D_{n}/\langle\tau^{(2n-3)s}\eta\rangle$, where $\eta$ is the automorphism of $\mathbb{Z}D_{n}$ defined by the swap of the two high vertices. We may proceed in a similar way as the type $(D_n,s,1)$ with $n\geq 4,s\geq 1$.

\medskip
{\bf 5. Type $(D_{4},s,3)$ with $s\geq 1$.} The stable AR-quiver is of the form $\mathbb{Z}D_{4}/\langle\tau^{5s}\eta\rangle$, where $\eta$ is the automorphism of $\mathbb{Z}D_{4}$ induced from an automorphism of $D_{4}$ of order 3. We may proceed in a similar way as the type $(D_4,s,1)$ with $s\geq 1$.

\medskip
{\bf 6. Type $(D_{3m},s/3,1)$ with $m\geq 2$ and $3\nmid s\geq 1$.} The case $s=1$ has dealt with in Proposition \ref{Prop, lift} (see also Remark \ref{standard-counterpart}). For $s\geq 2$, we can use a similar method as the type $(D_n,s,1)$ with $s\geq 2$. Note that in the case $s=1$ we use the fact that each morphism $(0,3m)\rightarrow(1,3m)$ in $k((\mathbb{Z}D_{3m}/\langle \tau^{2m-1}\rangle)_{\mathcal{C}})$ which factors through a projective vertex is zero. Since there is a covering functor $k((\mathbb{Z}D_{3m}/\langle \tau^{(2m-1)s}\rangle)_{\mathcal{C}'})\rightarrow k((\mathbb{Z}D_{3m}/\langle \tau^{2m-1}\rangle)_{\mathcal{C}})$ which is faithful and sends projective vertices to projective vertices, the similar fact is also true in $k((\mathbb{Z}D_{3m}/\langle \tau^{(2m-1)s}\rangle)_{\mathcal{C}'})$.

\medskip
{\bf 7. Type $(E_{n},s,1)$ with $n\in \{6,7,8\}$ and $s\geq 1$.} The algebra $B=\Lambda(E_{n},s,1)$ is given by the quiver below with relations

\begin{enumerate}
\item $\alpha^{i}_{1}\alpha^{i}_{2}\cdots\alpha^{i}_{n-3}=\beta^{i}_{1}\beta^{i}_{2}\beta^{i}_{3}=\gamma^{i}_{1}\gamma^{i}_{2}$ for all $i\in\{0,\cdots,s-1\}$;
\item For all $i\in \{0,\cdots,s-1\}=\mathbb{Z}/\langle s\rangle$, $\beta^{i+1}_{3}\alpha^{i}_{1}=\gamma^{i+1}_{2}\alpha^{i}_{1}
=\alpha^{i+1}_{n-3}\beta^{i+1}_{1}=\gamma^{i+1}_{2}\beta^{i+1}_{1}=\alpha^{i+1}_{n-3}\gamma^{i}_{1}=\beta^{i+1}_{3}\gamma^{i}_{1}=0$;
\item $\alpha$-paths of length $n-2$ are equal to $0$, $\beta$-paths of length $4$ are equal to $0$, $\gamma$-paths of length $3$ are equal to $0$.
\end{enumerate}

$$
\vcenter{
	\xymatrix@R=1.5pc@C=1.7pc{
    &&\bullet\ar[dd]_{\alpha^{s-1}_{1}}&\ar[l]_{\alpha^{s-1}_{2}}\ar@{{}*{\cdot}{}}[r]&&&&& \\
    &&&\ar[ld]^(0.3){\beta^{s-1}_{1}}\ar@{{}*{\cdot}{}}[r]&&\ar@{{}*{\cdot}{}}[rrdd]&&& \\
    \bullet\ar[d]_{\alpha^{0}_{n-4}}&&\bullet\ar[ll]_{\alpha^{0}_{n-3}}\ar[ld]_(0.6){\beta^{0}_{3}}\ar[dd]^{\gamma^{0}_{2}}&
    &\ar[ll]^{\gamma^{s-1}_{1}}\ar@{{}*{\cdot}{}}[r]&&&& \\
    \ar@{{}*{\cdot}{}}[dd]&\bullet\ar[dd]^{\beta^{0}_{2}}&&&&&\ar@{{}*{\cdot}{}}[d]&& \\
    &&\bullet\ar[dd]^{\gamma^{0}_{1}}&&&&&\ar@{{}*{\cdot}{}}[d]&\ar@{{}*{\cdot}{}}[d] \\
    \ar[d]_{\alpha^{0}_{2}}&\bullet\ar[rd]^(0.3){\beta^{0}_{1}}&&&&&&& \\
    \bullet\ar[rr]_{\alpha^{0}_{1}}&&\bullet\ar[rr]^{\gamma^{1}_{2}}\ar[rd]_{\beta^{1}_{3}}\ar[dd]_{\alpha^{1}_{n-3}}&
    &\bullet\ar[rr]^{\gamma^{1}_{1}}&
    &\bullet\ar[rr]_{\alpha^{2}_{n-3}}\ar[ru]_{\beta^{2}_{3}}\ar[uu]_{\gamma^{2}_{2}}&&\bullet\ar[u]_{\alpha^{2}_{n-4}} \\
    &&&\bullet\ar[rr]_{\beta^{1}_{2}}&&\bullet\ar[ru]^{\beta^{1}_{1}}&&& \\
    &&\bullet\ar[r]_{\alpha^{1}_{n-4}}&\ar@{{}*{\cdot}{}}[rr]&&\ar[r]_{\alpha^{1}_{2}}&\bullet\ar[uu]_{\alpha^{1}_{1}}&&
}}$$

Note that $\prescript{}{s}{\Gamma}_{B}\cong\mathbb{Z}E_{n}/\langle\tau^{m_{n}s}\rangle$, where $m_{6}=11$, $m_{7}=17$, $m_{8}=29$. Similar to the type $(D_n,s,1)$ with $n\geq 4,s\geq 1$, it suffices to consider $s=1$. When $n=6$ and $s=1$,
  we take $A=B$. Then $\mathcal{C}=\{(0,1),(1,1),(-1,6),(-2,5),(-1,5),(5,3)\}$ and part of $\prescript{}{s}{\Gamma}_{A}$ is of the form:

$$
\vcenter{
	\xymatrix {
    \cdots&\star\ar[rd]&&\star\ar[rd]|{\beta_{5}}&&\bullet&\cdots \\
    &&\bullet\ar[ur]|{\alpha_{5}}\ar[dr]|{\beta_{4}}&&\bullet\ar[ur]\ar[rd]&& \\
    \cdots&\bullet\ar[ur]|{\alpha_{4}}\ar[rd]|{\alpha_{2}}\ar[r]|{\alpha_{3}}&\star\ar[r]|{\beta_{3}}&
    \bullet\ar[ur]\ar[rd]\ar[r]&\bullet\ar[r]&\bullet&\cdots \\
    &&\bullet\ar[ur]|{\beta_{2}}\ar[rd]|{\alpha_{1}}&&\bullet\ar[ur]\ar[rd]&& \\
    \cdots&\star\ar[ur]&&\star\ar[ur]|{\beta_{1}}&&\bullet&\cdots \\
    &0&&1&&2&
}}$$

Choose a section $E_{6}'$ in $\prescript{}{s}{\Gamma}_{A}$ as follow:

$$
\vcenter{
	\xymatrix {
	&&(-1,6)&& \\
    (1,1)&(0,2)\ar[l]_{\alpha_{1}}&(-1,3)\ar[l]_{\alpha_{2}}\ar[u]^{\alpha_{3}}\ar[r]^{\alpha_{4}}&(-1,4)\ar[r]^{\alpha_{5}}&(-1,5)
}}$$

By the position of $\mathcal{C}$ in $\prescript{}{s}{\Gamma}_{A}$, $\alpha_{1}$ and $\alpha_{5}$ correspond to irreducible monomorphisms. Let $\phi': k(\prescript{}{s}{\Gamma}_{A})\rightarrow k(\prescript{}{s}{\Gamma}_{A})$ be an isomorphism which maps each object in $k(\prescript{}{s}{\Gamma}_{A})$ to itself. To lift $\phi': k(\prescript{}{s}{\Gamma}_{A})\rightarrow k(\prescript{}{s}{\Gamma}_{A})$ to a functor $\Phi:k(\Gamma_{A})\rightarrow k(\Gamma_{A})$, one can first choose morphisms $\Phi(\alpha_{1}),\cdots,\Phi(\alpha_{5})$ which lift $\phi'(\alpha_{1}),\cdots,\phi'(\alpha_{5})$ respectively. Using Lemma \ref{Lem, lift a mesh relation}(1), one can lift arrows in $\prescript{}{s}{\Gamma}_{A}$ from the section $E_{6}'$ to the left. Now assume that the values of $\Phi$ on all arrows of $\prescript{}{s}{\Gamma}_{A}$ except $\beta_{1},\cdots,\beta_{5}$ have been defined, which satisfy $\Phi(m_x)=0$ for each vertex $x$ such that $x$ is not in $\mathcal{C}$ and such that the values of $\Phi$ on all arrows in $m_x$ have been defined. Using Lemma \ref{Lem, lift a mesh relation}(2), the values $\Phi(\beta_{2}),\Phi(\beta_{3}),\Phi(\beta_{4})$ can be defined such that $\Phi(m_{(-1,3)})=0$. Since $\alpha_{1}$ and $\alpha_{5}$ correspond to irreducible monomorphisms, by Lemma \ref{Lem, lift a mesh relations}(2), one can define $\Phi(\beta_{1})$, $\Phi(\beta_{5})$ such that $\Phi(m_{(0,2)})=0$ and $\Phi(m_{(-1,4)})=0$. Finally, we define the values of $\Phi$ on the arrows of ${\Gamma}_{A}$ which link to  projective vertices.

When $n=7$ or $8$, the proofs are similar to the case $n=6$.

\medskip
{\bf 8. Type $(E_{6},s,2)$ with $s\geq 1$.} The stable AR-quiver is of the form $\mathbb{Z}E_{6}/\langle\tau^{11s}\eta\rangle$, where $\eta$ is the automorphism of $\mathbb{Z}E_{6}$ induced from an automorphism of $E_{6}$ of order 2. We may proceed in a similar way as the type $(E_{6},s,1)$ with $s\geq 1$.

\begin{Rem}\label{}
Proposition \ref{corrected-form} is true for any RFS algebra, the reason is as follows. If $A$ is a RFS algebra of Loewy length $\geq 3$, then every stable auto-equivalence of $A$ is of Morita type, according to Linckelmann's theorem (\cite[Theorem 2.1(iii)]{Linckelmann1996}), Proposition \ref{corrected-form} holds in this case. If $A$ is a RFS algebra of Loewy length $\leq 2$, then every stable auto-equivalence of $A$ which maps each object to itself is the identity functor, which clearly lifts to the identity functor on mod$A$.
\end{Rem}

\section{}

For the benefit of the reader we give a detailed proof of Proposition \ref{H-and-H'} (\cite[Lemma 4.10]{CKL}). First we recall the notion of simple-minded system and the mutation theory of simple-minded systems.

Let $A$ be a self-injective algebra. For $X,Y,Z\in \underline{\mathrm{mod}}A$, $Y$ is called an extension of $X$ and $Z$ if there exists an exact sequence $0\rightarrow X\rightarrow Y\oplus P\rightarrow Z\rightarrow 0$ in $\mathrm{mod}A$, where $P$ is a projective module.

\begin{Def}\label{Def,sms} {\rm(see \cite{KL} or \cite{Dugas2015})}
	Let $A$ be a self-injective $k$-algebra, $\mathcal{S}$ be a set of objects in $\underline{\mathrm{mod}}A$ such that for all $S,T\in \mathcal{S}$, $\underline{Hom}_{A}(S,T)= \left\{\begin{array}{ll} 0 & (S\neq T), \\
	k & (S=T).\end{array}\right.$ Let $\mathcal{F}(\mathcal{S})$ be the smallest subcategory of $\underline{\mathrm{mod}}A$ which contains $\mathcal{S}$ and closed under extensions. $\mathcal{S}$ is called a simple-minded system (sms for short) in $\underline{\mathrm{mod}}A$ if $\mathcal{F}(\mathcal{S})=\underline{\mathrm{mod}}A$.
\end{Def}

By definition, the set of nonprojective simple $A$-modules is an sms in $\underline{\mathrm{mod}}A$.

\begin{Def}\label{Def,mutation} {\rm(\cite[Definition 4.1 and Remark]{Dugas2015})}
	Let $A$ be a self-injective algebra and $\mathcal{S}$ be an sms which is stable under the Nakayama functor $\mathscr{N}=DHom_{A}(-,A)$ up to isomorphisms. Let $\mathcal{X}$ be a subset of $\mathcal{S}$ which is stable under $\mathscr{N}$. The left mutation of the sms $\mathcal{S}$ with respect to $\mathcal{X}$ is the set $\{\mu^{+}_\mathcal{X}(X)\mid X\in \mathcal{S}\}$, where
	\begin{enumerate}
		\item $\mu^{+}_\mathcal{X}(X)=\Omega_{A}^{-1}(X)$, if $X\in \mathcal{X}$;
		\item Otherwise, $\mu^{+}_\mathcal{X}(X)$ is given by the push-out diagram\\
		\xymatrix{
			0\ar[r]  & \Omega_{A}(X)\ar[r]\ar[d] & P\ar[r]\ar[d] & X\ar[r]\ar@{=}[d]  & 0 \\
			0\ar[r] & Y\ar[r] & \mu^{+}_{\mathcal{X}}(X)\ar[r] & X\ar[r] & 0 \\	
		} \\ where $\Omega_{A}(X)\rightarrow Y$ is a minimal left $\mathcal{F}(\mathcal{X})$-approximation of $\Omega_{A}(X)$.
	\end{enumerate}
\end{Def}

It is shown in \cite{Dugas2015} that the left mutation of an sms is again an sms.

\begin{Prop}  {\rm(\cite[Lemma 4.10]{CKL})}
	Let $k$ be an algebraically closed field of characteristic $2$, $\Lambda$ be the representative algebra of nonstandard RFS algebras of type $(D_{3m},1/3,1)$, where $m\geq 2$. Then there exists a standard derived auto-equivalence of $\Lambda$ which induces a stable auto-equivalence $H$ of $\Lambda$ such that $H$ induces the automorphism on $\prescript{}{s}{\Gamma}_{\Lambda}$ by the swap of the two high vertices.
\end{Prop}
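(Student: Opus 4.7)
My plan is to construct the desired derived equivalence using the simple-minded system mutation theory recalled above. The set $\mathcal{S}$ of isoclasses of (images in $\underline{\mathrm{mod}}\Lambda$ of) simple $\Lambda$-modules is an sms, since every $\Lambda$-module is filtered by its simple composition factors. Because $\Lambda$ is symmetric, the Nakayama functor $\mathscr{N}$ is naturally isomorphic to the identity on $\underline{\mathrm{mod}}\Lambda$, so every subset of $\mathcal{S}$ is automatically $\mathscr{N}$-stable. The key input beyond Definitions \ref{Def,sms}--\ref{Def,mutation} is the Okuyama--Rickard-type realization theorem asserting that every left mutation of an sms is induced by a standard derived equivalence arising from an explicit two-term tilting complex.

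The concrete proposal is to mutate at $\mathcal{X} = \{S_1\}$, where $S_1$ is the simple at the loop vertex $1$ of the quiver $Q$ of $\Lambda$; this choice is natural because $\Omega(S_1) = \mathrm{rad}\,P_1$ sits at the high vertex $(0,3m-1)$ of $\prescript{}{s}\Gamma_{\Lambda}$. By Definition \ref{Def,mutation} we have $\mu^+_{\mathcal{X}}(S_1) = \Omega^{-1}(S_1)$; exploiting $\Omega^2 = \tau$ (valid for symmetric $\Lambda$) one computes $\Omega$ as an automorphism of $\prescript{}{s}\Gamma_{\Lambda}$ and checks that $\Omega^{-1}(S_1)$ lies at the other high vertex, up to a $\tau$-shift. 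For $i \neq 1$, the push-out defining $\mu^+_{\mathcal{X}}(S_i)$ is computed from the minimal left $\mathcal{F}(\{S_1\})$-approximation of $\Omega(S_i) = \mathrm{rad}\,P_i$, using the explicit presentation of $\mathrm{ind}\,\Lambda$ via $k\Gamma_{\Lambda}/J$ from Section 2 and taking the modified mesh relation at $(0,3m-1)$ into account. The verification then reduces to showing that the mutated sms $\mathcal{S}' = \mu^+_{\mathcal{X}}(\mathcal{S})$ occupies the positions of $\mathcal{S}$ transformed by $\tau^k \eta$ for some explicit integer $k$, where $\eta$ is the swap of the two high vertices.

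By the realization theorem, $\mu^+_{\mathcal{X}}$ is induced by a standard derived equivalence $F \colon D^b(\Lambda) \to D^b(\mathrm{End}_{D^b(\Lambda)}(T)^{op})$, where $T$ is the associated Okuyama tilting complex. Since $\mathcal{S}'$ is the image of $\mathcal{S}$ under a translation quiver automorphism, the Ext-quiver of $\mathcal{S}'$ in $\underline{\mathrm{mod}}\Lambda$ agrees with that of $\mathcal{S}$, yielding $\mathrm{End}(T)^{op} \cong \Lambda$ so that $F$ becomes a derived auto-equivalence. Composing with $\tau_{\Lambda}^{-k}$, itself the stable shadow of a standard derived auto-equivalence (since $\tau_{\Lambda} = \Omega_{\Lambda}^2$ for symmetric $\Lambda$), cancels the $\tau^k$ factor and delivers the desired $H$ inducing exactly the swap $\eta$ on $\prescript{}{s}\Gamma_{\Lambda}$. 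The main obstacle is the explicit positional computation of the $\mu^+_{\mathcal{X}}(S_i)$ in $k\Gamma_{\Lambda}/J$, where the nonstandardness of $\Lambda$ (encoded in the modified mesh relation at the high vertex $(0,3m-1)$) must be handled with care; if one mutation at $\{S_1\}$ does not suffice on its own, one iterates over a finite sequence of subsets, the composite still being a standard derived equivalence.
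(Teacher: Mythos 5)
Your overall framework is the same as the paper's: mutate the sms of simple modules at a one-element $\mathscr{N}$-stable subset, invoke the Okuyama--Rickard realization to get a standard derived equivalence, identify the target algebra with $\Lambda$, and read off the induced automorphism of $\prescript{}{s}{\Gamma}_{\Lambda}$ from the positions of the mutated sms. But your choice of mutation set $\mathcal{X}=\{S_1\}$ creates a genuine problem that the paper's choice $\mathcal{X}=\{S_2\}$ is specifically designed to avoid. Vertex $1$ carries the loop $\beta$, so $\mathrm{Ext}^1_{\Lambda}(S_1,S_1)\neq 0$ and $\mathcal{F}(\{S_1\})$ strictly contains $\mathrm{add}(S_1)$ (it contains at least the uniserial self-extension of $S_1$ of length $2$, e.g.\ $\mathrm{rad}P_m/\Lambda\alpha_1\alpha_m$). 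Consequently the minimal left $\mathcal{F}(\mathcal{X})$-approximations of the $\Omega(S_i)=\mathrm{rad}P_i$ are not simply ``project onto the $S_1$-isotypic part of the top,'' and the push-outs defining $\mu^{+}_{\mathcal{X}}(S_i)$ are nontrivial to locate in $\prescript{}{s}{\Gamma}_{\Lambda}$. You explicitly defer exactly this computation (``the main obstacle is the explicit positional computation''), and your fallback of iterating over further subsets concedes that the single mutation might not land on $\tau^{k}\eta(\mathcal{S})$ at all. Since whether the resulting automorphism involves $\eta$ or is a pure power of $\tau$ is precisely the content of the proposition, the proof is incomplete at its central step. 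By contrast, the paper mutates at the simple $2$, where $\mathrm{Ext}^1(S_2,S_2)=0$ forces $\mathcal{F}(\mathcal{X})=\mathrm{add}(S_2)$; the approximations become one-line computations ($\mathrm{rad}P_1\twoheadrightarrow 2$ and $\mathrm{rad}P_i\to 0$ for $3\leq i\leq m$), and the mutated sms visibly occupies the positions $\tau\eta(\mathcal{S}_{\Lambda})$.

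A second gap is the identification of the endomorphism algebra of the Okuyama tilting complex with $\Lambda$. Arguing that ``the Ext-quiver of $\mathcal{S}'$ agrees with that of $\mathcal{S}$, yielding $\mathrm{End}(T)^{op}\cong\Lambda$'' is not sufficient: in type $D_{3m}$ over a field of characteristic $2$ a single isoclass of configurations can correspond to \emph{two} non-isomorphic symmetric algebras, one standard and one nonstandard, so agreement of combinatorial data does not by itself pin down the algebra. The paper handles this by noting that the new algebra $\Pi$ is symmetric (Rickard), representation-finite, and derived equivalent to $\Lambda$, hence by Theorem \ref{dclassRFS} it is nonstandard of the same type; only then does Riedtmann's configuration correspondence, combined with the fact that the image of $\mathcal{C}_{\Pi}$ matches the position of $\mathcal{C}_{\Lambda}$, force $\Pi\cong\Lambda$. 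You should incorporate both this standardness argument and the explicit positional computation of $\mu^{+}_{\mathcal{X}}(\mathcal{S}_{\Lambda})$ (preferably after switching to $\mathcal{X}=\{S_2\}$) to complete the proof.
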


\begin{proof} Let $\mathcal{S}_{\Lambda}$ be the set of simple $\Lambda$-modules, $\mathcal{X}=\{2\}\subseteq \mathcal{S}_{\Lambda}$. Since $\Lambda$ is symmetric, $\mathscr{N}\simeq id$ and $\mathcal{X}$ is stable under $\mathscr{N}$. Since $2$ and $2$ have only trivial extension, $\mathcal{F}(\mathcal{X})=add(2)$. The projection ${\rad}P_{1}\rightarrow 2$ is a minimal left $\mathcal{F}(\mathcal{X})$-approximation of ${\rad}P_{1}$. There exists a commutative diagram $$\xymatrix{
		0\ar[r]  & {\rad}P_{1}\ar[r]\ar[d] & P_{1}\ar[r]\ar[d] & 1\ar[r]\ar@{=}[d]  & 0 \\
		0\ar[r] & 2\ar[r] & M\ar[r] & 1\ar[r] & 0, \\	
	}$$
	where $M=$ \unitlength=1.00mm
	\special{em:linewidth 0.4pt} \linethickness{0.4pt}
	\begin{picture}(4.00,7.00)
	\put(0,-1){$2$} \put(0,3){$1$}
	\end{picture}. Then $\mu^{+}_\mathcal{X}(1)=$ \unitlength=1.00mm
	\special{em:linewidth 0.4pt} \linethickness{0.4pt}
	\begin{picture}(4.00,7.00)
	\put(0,-1){$2$} \put(0,3){$1$}
	\end{picture}. For $3\leq i\leq m$, $Hom_{\Lambda}({\rad}P_{i},2)=0$. Then ${\rad}P_{i}\rightarrow 0$ is a minimal left $\mathcal{F}(\mathcal{X})-$approximation of ${\rad}P_{i}$ and $\mu^{+}_\mathcal{X}(i)=i$. Moreover, $$\mu^{+}_\mathcal{X}(2)=\Omega_{\Lambda}^{-1}(2)=\xymatrix@R=0pc{
		2 \\
		3 \\
		\vdots \\
		m \\
		1 \\
		1
	}.$$
In the following proof, we fix the simple $\Lambda$-modules $1,2,\cdots,m$ to the positions $(0,3m)$, $(2m-2,1)$, $\cdots$, $(m,1)$ in the stable AR-quiver $\prescript{}{s}{\Gamma}_{\Lambda}$, respectively (cf. Section 2). Then $\mu^{+}_\mathcal{X}(1)$ corresponds to $(2m-2,3m-1)$, and $\mu^{+}_\mathcal{X}(2)$ corresponds to $(m-1,1)$.
	
By \cite[Okuyama's lemma]{Dugas2015} and noting that the definition of mutation we used here is a variation of Dugas' original one by shifting the objects by ${\Omega_{\Lambda}}^{-1}$, there exist an algebra $\Pi$ and a derived equivalence $F:D^{b}(\mathrm{mod} \Pi)\rightarrow D^{b}(\mathrm{mod} \Lambda)$ which induces a stable equivalence $\phi: \underline{\mathrm{mod}}\Pi\rightarrow \underline{\mathrm{mod}}\Lambda$ sending the set of simple $\Pi$-modules to $\Omega_{\Lambda}(\mu^{+}_\mathcal{X}(\mathcal{S}_{\Lambda}))$. By \cite[Corollary 3.5]{Rickard1991}, we can assume that $F$ is a standard derived equivalence. We may assume that $\Pi$ is basic. Since both $\Omega_{\Lambda}$ and $\tau_{\Lambda}$ lift to derived equivalences, there exists a stable equivalence $H=\tau_{\Lambda}^{-1}{\Omega_{\Lambda}}^{-1}\phi: \underline{\mathrm{mod}}\Pi\rightarrow \underline{\mathrm{mod}}\Lambda$ which lifts to a derived equivalence and sends the set of simple $\Pi$-modules to $\tau_{\Lambda}^{-1}\mu^{+}_\mathcal{X}(\mathcal{S}_{\Lambda})$. Since $\Lambda$ is symmetric and $\Pi$ and $\Lambda$ are derived equivalent, by \cite[Corollary 5.3]{Rickard1991}, $\Pi$ is a symmetric algebra of finite representation type. Hence by Theorem \ref{dclassRFS}, $\Pi$ is nonstandard and $typ(\Pi)=typ(\Lambda)$.

$$\xymatrix{
		\mathcal{S}_{\Lambda}\ar[r]^{mutate\quad\quad}& \Omega_{\Lambda}(\mu^{+}_\mathcal{X}(\mathcal{S}_{\Lambda}))\ar[d]_{\tau_{\Lambda}^{-1}{\Omega_{\Lambda}}^{-1}} &\mathcal{S}_{\Pi}\ar[l]_{\quad\phi}\ar[dl]^{h} &\mathcal{C}_{\Pi}\ar[l]_{\quad\omega_{\Pi}^{-1}}\\
&\tau_{\Lambda}^{-1}\mu^{+}_\mathcal{X}(\mathcal{S}_{\Lambda})=\{(0,3m-1), (m,1), (m+1,1),\cdots ,(2m-2,1)\}\ar[d]_{\eta}&&\\
&\mathcal{S}_{\Lambda}=\{(0,3m), (m,1), (m+1,1),\cdots ,(2m-2,1)\}\ar[d]_{\omega_{\Lambda}}&&\\
&\mathcal{C}_{\Lambda}&&	}$$

Let $\mathcal{C}_{\Pi}$ and $\mathcal{S}_{\Pi}$ be the set of radicals of indecomposable projective $\Pi$-modules and the set of simple $\Pi$-modules respectively. $\Omega_{\Pi}$ induces an automorphism $\omega_{\Pi}$ of $\prescript{}{s}{\Gamma}_{\Pi}$ which sends $\mathcal{S}_{\Pi}$ to $\mathcal{C}_{\Pi}$. Let $h:\prescript{}{s}{\Gamma}_{\Pi}\rightarrow \prescript{}{s}{\Gamma}_{\Lambda}$ be the isomorphism between stable AR-quivers induced by $H$, $\omega_{\Lambda}$ be the automorphism of $\prescript{}{s}{\Gamma}_{\Lambda}$ induced by $\Omega_{\Lambda}$. Since $\tau_{\Lambda}^{-1}\mu^{+}_\mathcal{X}(\mathcal{S}_{\Lambda})$ corresponds to the position $\{(0,3m-1), (m,1), (m+1,1),\cdots ,(2m-2,1)\}$, $\eta h$ sends $\mathcal{S}_{\Pi}$ to $\mathcal{S}_{\Lambda}$ and $\omega_{\Lambda}\eta h{\omega_{\Pi}}^{-1}:\prescript{}{s}{\Gamma}_{\Pi}\rightarrow \prescript{}{s}{\Gamma}_{\Lambda}$ is an isomorphism which maps $\mathcal{C}_{\Pi}$ to $\mathcal{C}_{\Lambda}$, where $\eta$ is the automorphism of $\prescript{}{s}{\Gamma}_{\Lambda}$ which is induced from the automorphism of the quiver $D_{3m}$ by the swap of the two high vertices and $\mathcal{C}_{\Lambda}$ is the set of radicals of indecomposable projective $\Lambda$-modules. Then the AR-quivers of the two nonstandard RFS algebras $\Pi$ and $\Lambda$ are isomorphic. According to Riedtmann's configuration theory (see the paragraph after Definition \ref{combinatorial-configuration} in Section 1), $\Pi$ and $\Lambda$ are isomorphic as algebras. Then $H$ can be identified as a stable auto-equivalence of $\Lambda$ which induces an automorphism $h$ of $\prescript{}{s}{\Gamma}_{\Lambda}$ such that $h$ maps the set of vertices $\{(0,3m), (2m-2,1), \cdots, (m,1)\}$ to $\{(0,3m-1), (2m-2,1), \cdots, (m,1)\}$. Since $Aut(\prescript{}{s}{\Gamma}_{\Lambda})=\langle\tau\rangle\times\langle\eta\rangle$, $h=\eta$ and $H$ induces the automorphism on $\prescript{}{s}{\Gamma}_{\Lambda}$ by the swap of the two high vertices.

\end{proof}

\begin{Rem}\label{general-case-for-D-3m}
The same proof works for all standard RFS algebras of type $(D_{3m},s/3,1)$ with $3\nmid s$ and $m\geq 2$, see \cite[Remark 4.11]{CKL} for an explanation. Combing Corollary A.2, we have proved that every stable auto-equivalence also lifts to a standard derived equivalence in this case.
\end{Rem}

\end{document}